\documentclass[12pt,reqno]{amsart}


\usepackage{wasysym, amsmath, amssymb,graphicx,amsthm,latexsym, amsfonts, enumitem, mathtools, tensor}
\usepackage{hyperref}
\usepackage{xcolor}
\usepackage[all, color]{xy}
\usepackage{color}
%
\usepackage{float}
\usepackage{tikz}
\usetikzlibrary{arrows,decorations.pathmorphing,decorations.pathreplacing,positioning,shapes.geometric,shapes.misc,decorations.markings,decorations.fractals,calc,patterns}

\DeclareMathOperator{\Hom}{Hom}
\DeclareMathOperator{\End}{End}

\DeclareMathOperator{\im}{Im}

\DeclareMathOperator{\Coker}{Coker}

\DeclareMathOperator{\mmod}{mod}

\DeclareMathOperator{\Tr}{Tr}

\DeclareMathOperator{\rad}{rad}
\DeclareMathOperator{\Ext}{Ext}

\DeclareMathOperator{\add}{add}

\theoremstyle{plain}
\newtheorem{theorem}{Theorem}[section]
\newtheorem*{theorem*}{Theorem}

\theoremstyle{definition}
\newtheorem{defn}[theorem]{Definition}

\newtheorem{remark}[theorem]{Remark}

\newtheorem{lemma}[theorem]{Lemma}
\newtheorem{notation}[theorem]{Notation}
\newtheorem{corollary}[theorem]{Corollary}

\newtheorem{setup}[theorem]{Setup}
\newtheorem{proposition}[theorem]{Proposition}

\setlength{\textwidth}{165mm}
\setlength{\textheight}{237mm}
\addtolength{\oddsidemargin}{-1.7cm}
\addtolength{\evensidemargin}{-1.7cm}
\addtolength{\topmargin}{-12mm}

\raggedbottom

\date{}
\begin{document}
\setlength{\parindent}{0pt}
\setlength{\parskip}{7pt}
\title[$d$-Auslander-Reiten sequences in subcategories]{\MakeLowercase{\textit{d}}-Auslander-Reiten sequences in subcategories}
\author{Francesca Fedele}
\address{School of Mathematics, Statistics and Physics,
Newcastle University, Newcastle upon Tyne NE1 7RU, United Kingdom}
\email{F.Fedele2@newcastle.ac.uk}
\keywords{$d$-abelian category, $d$-cluster tilting subcategory, $d$-pushout diagram, extension closed subcategories, higher dimensional Auslander-Reiten theory.}
\subjclass[2010]{16G70, 18E10.}

\begin{abstract}
    Let $\Phi$ be a finite dimensional algebra over a field $k$. Kleiner described the Auslander-Reiten sequences in a precovering extension closed subcategory $\mathcal{X}\subseteq\mmod\Phi$. If $X\in\mathcal{X}$ is an indecomposable such that $\Ext_{\Phi}^1(X,\mathcal{X})\neq 0$ and $\zeta X$ is the unique indecomposable direct summand of the $\mathcal{X}$-cover $g:Y\rightarrow D\Tr X$ such that $\Ext_{\Phi}^1(X,\zeta X)\neq 0$, then there is an Auslander-Reiten sequence in $\mathcal{X}$ of the form
    \begin{align*}
        \epsilon: 0\rightarrow \zeta X\rightarrow X'\rightarrow X\rightarrow 0.
    \end{align*}
    Moreover, when $\End_\Phi (X)$ modulo the morphisms factoring through a projective is a division ring, Kleiner proved that each non-split short exact sequence of the form
    \begin{align*}
        \delta: 0\rightarrow Y\rightarrow Y'\xrightarrow{\eta} X\rightarrow 0
    \end{align*}
    is such that $\eta$ is right almost split in $\mathcal{X}$, and the pushout of $\delta$ along $g$ gives an Auslander-Reiten sequence in $\mmod\Phi$ ending at $X$.
    
    In this paper, we give higher dimensional generalisations of this. Let $d\geq 1$ be an integer. A $d$-cluster tilting subcategory $\mathcal{F}\subseteq\mmod\Phi$ plays the role of a higher $\mmod\Phi$. Such an $\mathcal{F}$ is a $d$-abelian category, where kernels and cokernels are replaced by complexes of $d$ objects and short exact sequences by complexes of $d+2$ objects. We give higher versions of the above results for an additive ``$d$-extension closed'' subcategory $\mathcal{X}$ of $\mathcal{F}$.
\end{abstract}
\maketitle

\section{Introduction}
Let $d$ be a fixed positive integer, $k$  a field and $\Phi$ a finite dimensional $k$-algebra. Let $\mmod\Phi$ denote the category of finitely generated right $\Phi$-modules.

\subsection{Classic background (\texorpdfstring{$d=1$}{TEXT} case).}
Auslander-Reiten sequences in $\mmod\Phi$ are non-split short exact sequences that are a very useful tool to study indecomposable modules in $\mmod\Phi$ and the morphisms between them.
If $M\in\mmod\Phi$ is an indecomposable non-projective module, then there is an Auslander-Reiten sequence in $\mmod\Phi$ of the form:
\begin{align*}
    \xymatrix{
    0\ar[r]& D\Tr M\ar[r]& N\ar[r]& M\ar[r]&0,
    }
\end{align*}
where $D\Tr$ is the Auslander-Reiten translation. Then the components of the morphism $N\rightarrow M$ are all the irreducible morphisms ending at the indecomposable $M$ and the components of $D\Tr M\rightarrow N$ are all the irreducible morphisms starting at the indecomposable $D\Tr M$.
A detailed study of Auslander-Reiten sequences and their use can be found in \cite[Chapter V]{ARS} for example.

Let $\mathcal{X}\subseteq\mmod\Phi$ be a full subcategory closed under summands and extensions, in the sense that if $0\rightarrow X\rightarrow Y\rightarrow Z\rightarrow 0$ is a short exact sequence in $\mmod\Phi$ with  $X,Z\in\mathcal{X}$, then $Y\in\mathcal{X}$. Auslander and Smal{\o}  introduced the notion of almost split sequences in subcategories and, in \cite[Theorem 2.4]{AS}, showed a weaker version of the following theorem introduced later by Kleiner in \cite[Corollary 2.8]{MK}.

\textbf{Theorem A (Kleiner).}
{\em
Assume $\mathcal{X}$ is precovering in $\mmod\Phi$ and let $X$ be an indecomposable in $\mathcal{X}$.
\begin{enumerate}[label=(\alph*)]
    \item There exists a right almost split morphism $W\rightarrow X$ in $\mathcal{X}$.
    \item If $\Ext_{\Phi}^1(X,\mathcal{X})$ is non-zero, there is an Auslander-Reiten sequence in $\mathcal{X}$ of the form:
    \begin{align*}
    \xymatrix{
    0\ar[r]& \zeta X\ar[r]& X^1\ar[r]& X\ar[r]&0,
    }
    \end{align*}
    where $\zeta X$ is the unique indecomposable direct summand of the $\mathcal{X}$-cover of  $D\Tr X$ such that $\Ext_{\Phi}^1(X,\zeta X)\neq 0$.
\end{enumerate}
}

For $M\in\mmod\Phi$, let $\underline{\End}_\Phi(M)$ denote the factor ring of $\End_\Phi (M)$ modulo the ideal of morphisms $M\rightarrow M$ that factor through a projective module. Then, Auslander, Reiten and Smal\o's argument in \cite[proof of Corollary V.2.4]{ARS} can be easily modified to prove the following.

\textbf{Theorem B.}
{\em Assume $\mathcal{X}$ is precovering in $\mmod\Phi$. Let $X\in\mathcal{X}$ be an indecomposable such that $\underline{\End}_\Phi(X)$ is a division ring. For a short exact sequence of the form
\begin{align*}
    \xymatrix{
    \delta:& 0\ar[r]& \zeta X\ar[r]& X^1\ar[r]& X\ar[r]&0,
    }
\end{align*}
the following are equivalent:
\begin{enumerate}[label=(\alph*)]
    \item $\delta$ is an Auslander-Reiten sequence in $\mathcal{X}$,
    \item $\delta$ does not split.
\end{enumerate}
}

As a corollary of the above, one can prove the following result by Kleiner, see \cite[Proposition 2.10]{MK}.

\textbf{Corollary C (Kleiner).} 
{\em Assume $\mathcal{X}$ is precovering in $\mmod\Phi$. Let $g:Y\rightarrow D\Tr (X)$ be an $\mathcal{X}$-cover, where $X$ is an indecomposable in $\mathcal{X}$ with $\underline{\End}_\Phi(X)$ a division ring. Consider a non-split short exact sequence with terms in $\mathcal{X}$ of the form 
\begin{align*}	
\xymatrix{
    0\ar[r]& Y\ar[r]& Y^1\ar[r]^\eta& X\ar[r]&0.
    }
\end{align*}
Then the bottom row of the pushout diagram
\begin{align*}
    \xymatrix@!0 @R=3.5em @C=4em{
    0\ar[r]& Y\ar[r]\ar[d]& Y^1\ar[r]^\eta\ar[d]& X\ar[r]\ar@{=}[d]&0\\
    0\ar[r] & D\Tr X\ar[r]& N\ar[r]& X\ar[r]&0
    }
\end{align*}
is an Auslander-Reiten sequence in $\mmod\Phi$ and $\eta$ is right almost split in $\mathcal{X}$.
}

\subsection{This paper ($d\geq 1$ case).}
Assume now that there is a $d$-cluster tilting subcategory $\mathcal{F}\subseteq \mmod\Phi$, \textbf{i.e.} a functorially finite additive subcategory such that
\begin{align*}
    \mathcal{F}=\{ A\in\mmod\Phi\mid \Ext_{\Phi}^{1,\dots, d-1} (\mathcal{F},A)=0 \}=\{ A\in\mmod\Phi\mid \Ext_{\Phi}^{1,\dots, d-1} (A,\mathcal{F})=0 \},
\end{align*}
see \cite[Definition 1.1]{I}.
In \cite{JG}, Jasso generalised abelian categories to $d$-abelian categories: kernels and cokernels are replaced by complexes of $d$ objects, called $d$-kernels and $d$-cokernels respectively, and short exact sequences by complexes of $d+2$ objects, called $d$-exact sequences, see Definition \ref{defn_d_Jasso}. Then, $\mathcal{F}$ is a $d$-abelian category and it plays the role of a higher version of the abelian category $\mmod\Phi$. Note that for $d=1$, the only possible choice is $\mathcal{F}=\mmod\Phi$.

In \cite{IO}, Iyama generalised Auslander-Reiten sequences in $\mmod\Phi$ to $d$-Auslander Reiten sequences in $\mathcal{F}$. Moreover, he proved in \cite[Theorem 3.3.1]{IO} that if $A^{d+1}$ is an indecomposable non-projective in $\mathcal{F}$, then there exists a $d$-Auslander-Reiten sequence in $\mathcal{F}$, see Definition \ref{defn_dAR_seq} with $\mathcal{X}=\mathcal{F}$, of the form:
\begin{align*}
  \xymatrix @C=1.5em{
    0\ar[r]& D\Tr_d (A^{d+1}) \ar[r]& A^1\ar[r]& A^2\ar[r]&\cdots\ar[r]&A^{d-1}\ar[r]&A^d\ar[r]& A^{d+1}\ar[r]&0,
    }
\end{align*}
where $D\Tr_d$ is the $d$-Auslander-Reiten translation and $\Tr_d$ is as described in Definition \ref{defn_d_higher_transpose}. Let $\mathcal{X}\subseteq \mathcal{F}$ be an additive subcategory in the sense of Definition \ref{defn_additive_subcat} that is closed under $d$-extensions, see Definition \ref{defn_closed_u_ext}. We define $d$-Auslander-Reiten sequences in $\mathcal{X}$ and prove a higher version of Theorem A.

\textbf{Theorem \ref{coro_d-ARseq}.}
{\em
Assume $\mathcal{X}$ is precovering in $\mathcal{F}$ and let $X$ be an indecomposable in $\mathcal{X}$.
\begin{enumerate}[label=(\alph*)]
    \item There exists a right almost split morphism $W\rightarrow X$ in $\mathcal{X}$.
    \item If $\Ext_{\Phi}^d(X,\mathcal{X})$ is non-zero, there is a $d$-Auslander-Reiten sequence in $\mathcal{X}$ of the form:
    \begin{align*}
    \xymatrix{0\ar[r]&\sigma X\ar[r]^-{\xi^0}& X^1\ar[r]^{\xi^1}&\cdots\ar[r]& X^{d-1}\ar[r]^-{\xi^{d-1}} & X^d\ar[r]^-{\xi^d}& X\ar[r]&0,
    }
\end{align*}
where $\sigma X$ is the unique indecomposable direct summand of the $\mathcal{X}$-cover of $D\Tr_d (X)$ such that $\Ext_{\Phi}^d(X,\sigma X)\neq 0$.
\end{enumerate}
}

Let $\rad_\mathcal{X}$ denote the Jacobson radical of $\mathcal{X}$, that is the two sided ideal of $\mathcal{X}$ defined by
\begin{align*}
    \rad_\mathcal{X}(X,Y)=\{ \xi: X\rightarrow Y\mid 1_X-\eta\xi \text{ is invertible for any } \eta:Y\rightarrow X \},
\end{align*}
for all objects $X$ and $Y$ in $\mathcal{X}$. We prove a higher version of Theorem B.

\textbf{Theorem \ref{coro_ARS_gen}.}
{\em
Assume $\mathcal{X}$ is precovering in $\mathcal{F}$. Let $X$ be an indecomposable in $\mathcal{X}$ such that $\underline{\End}_\Phi(X)$ is a division ring. Let
\begin{align*}
    \xymatrix{\delta: 0\ar[r]&\sigma X\ar[r]^-{\xi^0}& X^1\ar[r]^{\xi^1}&\cdots\ar[r]& X^{d-1}\ar[r]^-{\xi^{d-1}} & X^d\ar[r]^{\xi^d}& X\ar[r]&0
    }
\end{align*}
be a  $d$-exact sequence with terms in $\mathcal{X}$ and such that $\xi^1,\dots,\,\xi^{d-1}$ are in $\rad_{\mathcal{X}}$ when $d\geq 2$. Then the following are equivalent:
\begin{enumerate}[label=(\alph*)]
    \item $\delta$ is a $d$-Auslander-Reiten sequence in $\mathcal{X}$,
    \item $\delta$ does not split.
\end{enumerate}
}

In \cite{JG}, Jasso generalised the idea of pushout to $d$-pushout of a $d$-exact sequence along a morphism from its first term, see Definition \ref{defn_dpush} and Lemma \ref{lemma_d-pushout_exists}. Then, we obtain a higher version of Corollary C as a corollary of Theorem \ref{coro_ARS_gen}.

\textbf{Corollary \ref{coro_dAR_division}.}
{\em
Assume $\mathcal{X}$ is precovering in $\mathcal{F}$. Let $g:Y\rightarrow D\Tr_d (X)$ be an $\mathcal{X}$-cover, where $X$ is an indecomposable in $\mathcal{X}$ with $\underline{\End}_\Phi(X)$ a division ring. Consider a non-split $d$-exact sequence with terms in $\mathcal{X}$ of the form:
\begin{align*}
    \xymatrix{
\epsilon:&0\ar[r] &Y\ar[r]^{\eta^0}& Y^1\ar[r]^{\eta^1}& \cdots\ar[r]&  Y^d\ar[r]^{\eta^d}& X\ar[r]&0,}
\end{align*}
where, if $d\geq 2$, we also have $\eta^1,\dots,\,\eta^{d-1}\in\rad_{\mathcal{X}}$. Consider a morphism induced by a $d$-pushout diagram:
\begin{align*}
\xymatrix{
\epsilon:\ar[d]&0\ar[r] &Y\ar[r]^{\eta^0}\ar[d]^-{g}& Y^1\ar[r]^{\eta^1}\ar[d]^-{g^1}& \cdots\ar[r]&  Y^d\ar[d]^-{g^d}\ar[r]^{\eta^d}& X\ar@{=}[d]\ar[r]&0\\
 \delta: & 0\ar[r]&D\Tr_d (X)\ar[r]_-{\alpha^0}& A^1\ar[r]_{\alpha^1}& \cdots\ar[r]& A^d\ar[r]_{\alpha^d}& X\ar[r]&0,
}
\end{align*}
where, if $d\geq 2$, we have that $\alpha^1,\dots,\,\alpha^{d-1}\in\rad_{\mathcal{F}}$. Then $\delta$ is a $d$-Auslander-Reiten sequence in $\mathcal{F}$ and $\eta^d$ is right almost split in $\mathcal{X}$.
}

We illustrate Theorem \ref{coro_d-ARseq} in the following example with $d=2$. Let $\Phi$ be the algebra defined by the following quiver with relations:
\begin{align*}
\xymatrix @C=1em@R=1em{
&&& 10\ar[rd] &&&\\
&& 9\ar[ru]\ar[rd]\ar@{..}[rr] && 8\ar[rd] &&\\
& 7 \ar[ru]\ar[rd]\ar@{..}[rr] && 6\ar[ru]\ar[rd]\ar@{..}[rr] && 5\ar[rd]&\\
4\ar[ru]\ar@{..}[rr] && 3\ar[ru]\ar@{..}[rr] && 2\ar[ru]\ar@{..}[rr] && 1.
}
\end{align*}
The Auslander-Reiten quiver of the unique $2$-cluster tilting subcategory $\mathcal{F}$ of $\mmod\Phi$ is shown in Figure \ref{fig:AR_F} on page \pageref{fig:AR_F}. Choosing a  subcategory $\mathcal{X}\subseteq \mathcal{F}$ satisfying our setup, namely add of the vertices coloured red in Figure \ref{fig:AR_F}, we use Theorem \ref{coro_d-ARseq} to describe the $2$-Auslander-Reiten sequences in $\mathcal{X}$.

The paper is organised as follows. Section \ref{section_defn} recalls the definitions of $d$-abelian and $d$-cluster tilting subcategories. Section \ref{section_3} presents some properties of $d$-pushout diagrams, $d$-exact sequences and their morphisms. Section \ref{section_4} studies $d$-Auslander-Reiten sequences in $\mathcal{X}$. Section \ref{section_Kleiner2} proves higher analogues to some of Kleiner's results from \cite[Section 2]{MK}, including Theorem \ref{coro_d-ARseq}. Section \ref{section_6} proves Theorem \ref{coro_ARS_gen} and Corollary \ref{coro_dAR_division}. Finally, Section \ref{section_7} illustrates an example of Theorem \ref{coro_d-ARseq}.

\section{Definitions of $d$-abelian categories and $d$-cluster tilting subcategories}\label{section_defn}
Let $d$ be a fixed positive integer, $k$  a field and $\Phi$ a finite dimensional $k$-algebra.
In this section we recall the definitions of $d$-abelian categories and $d$-cluster tilting subcategories of the category of finitely generated right $\Phi$-modules.

\begin{notation}\label{notation_right_modules}
Unless otherwise specified, we assume that $\Lambda$-modules of any $k$-algebra $\Lambda$ are right $\Lambda$-modules. The category of finitely generated right $\Lambda$-modules is denoted $\mmod\Lambda$ and the one of finitely generated left $\Lambda$-modules is denoted $\mmod\Lambda^{op}$.
\end{notation}

\begin{defn}[{\cite[Definitions 2.2, 2.4 and 2.9]{JG}}]\label{defn_d_Jasso}
Let $\mathcal{A}$ be an additive category.
\begin{enumerate}[label=(\alph*)]
    \item A diagram of the form
    $\xymatrix{
    A^0\ar[r]& A^1\ar[r]& A^2\ar[r]&\cdots\ar[r]&A^{d-1}\ar[r]&A^d
    }$
    is a \textit{$d$-kernel} of a morphism $\xymatrix{A^d\ar[r]& A^{d+1}}$ if 
    \begin{align*}
    \xymatrix{
    0\ar[r] & \Hom_{\mathcal{A}}(B,A^0)\ar[r]& \cdots\ar[r]&\Hom_{\mathcal{A}}(B,A^{d})\ar[r]& \Hom_{\mathcal{A}}(B,A^{d+1})
    }
    \end{align*}
    is an exact sequence for each $B$ in $\mathcal{A}$.
    \item A diagram of the form
    $\xymatrix{
    A^1\ar[r]&A^2\ar[r]&\cdots\ar[r]&A^{d-1}\ar[r]&A^d\ar[r]& A^{d+1}
    }$
   is a \textit{$d$-cokernel} of a morphism $\xymatrix{A^0\ar[r]& A^{1}}$ if 
    \begin{align*}
     \xymatrix{
    0\ar[r]&\Hom_{\mathcal{A}}(A^{d+1},B)\ar[r]&\cdots\ar[r]&\Hom_{\mathcal{A}}(A^1,B)\ar[r]& \Hom_{\mathcal{A}}(A^0,B)
    }
    \end{align*}
    is an exact sequence for each $B$ in $\mathcal{A}$.
    \item A \textit{$d$-exact sequence} is a diagram of the form:
    \begin{align*}
        \xymatrix{
    0\ar[r]&A^0\ar[r]^{\alpha^0}& A^1\ar[r]& A^2\ar[r]&\cdots\ar[r]&A^{d-1}\ar[r]&A^d\ar[r]^{\alpha^d}& A^{d+1}\ar[r]&0,
    }
    \end{align*}
    such that $\xymatrix{A^0\ar[r]^{\alpha^0}& A^1\ar[r]& A^2\ar[r]&\cdots\ar[r]&A^{d-1}\ar[r]&A^d}$ is a $d$-kernel of $\alpha^d$ and $\xymatrix{A^1\ar[r]& A^2\ar[r]&\cdots\ar[r]&A^{d-1}\ar[r]&A^d\ar[r]^{\alpha^d}& A^{d+1}}$ is a $d$-cokernel of $\alpha^0$.
    \item A \textit{morphism of $d$-exact sequences} is a chain map:
    \begin{align*}
    \xymatrix{0\ar[r]&A^0\ar[r]\ar[d]& A^1\ar[r]\ar[d]& A^2\ar[r]\ar[d]&\cdots\ar[r]&A^{d-1}\ar[r]\ar[d]& A^{d}\ar[r]\ar[d]& A^{d+1}\ar[r]\ar[d]&0\\
    0\ar[r]&B^0\ar[r]& B^1\ar[r]& B^2\ar[r]&\cdots\ar[r]&B^{d-1}\ar[r]& B^{d}\ar[r]& B^{d+1}\ar[r]&0,
    }
    \end{align*}
    in which each row is a $d$-exact sequence.
\end{enumerate}
\end{defn}

\begin{defn}[{\cite[Definition 3.1]{JG}}]
A \textit{$d$-abelian category} is an additive category $\mathcal{A}$ which satisfies the following axioms:
\begin{enumerate}
    \item[(A0)] The category $\mathcal{A}$ has split idempotents.
    \item[(A1)] Each morphism in $\mathcal{A}$ has a $d$-kernel and a $d$-cokernel.
    \item[(A2)] If $\alpha^0:\xymatrix{A^0\ar[r]&A^1}$ is a monomorphism and
    $\xymatrix{
    A^1\ar[r]&A^2\ar[r]&\cdots\ar[r]& A^{d+1}
    }$
    is a $d$-cokernel of $\alpha^0$, then
    \begin{align*}
        \xymatrix{
    0\ar[r]&A^0\ar[r]^{\alpha^0}& A^1\ar[r]& A^2\ar[r]&\cdots\ar[r]&A^{d-1}\ar[r]&A^d\ar[r]& A^{d+1}\ar[r]&0
    }
    \end{align*}
    is a $d$-exact sequence.
    \item[(A2$^{\text{op}}$)] If $\alpha^d:\xymatrix{A^d\ar[r]&A^{d+1}}$ is an epimorphism and
    $\xymatrix{
    A^0\ar[r]&\cdots\ar[r]& A^{d-1}\ar[r]& A^{d}
    }$
    is a $d$-kernel of $\alpha^d$, then
    \begin{align*}
        \xymatrix{
    0\ar[r]&A^0\ar[r]& A^1\ar[r]& A^2\ar[r]&\cdots\ar[r]&A^{d-1}\ar[r]&A^d\ar[r]^{\alpha^d}& A^{d+1}\ar[r]&0
    }
    \end{align*}
    is a $d$-exact sequence.
\end{enumerate}
\end{defn}

We recall the definition of right minimal morphism, see for example \cite[Definition 1.1, Chapter IV]{A}.
We also recall the definitions of precovers, covers, precovering subcategories and their dual notions, see for example \cite[Definition\ 1.4]{JP}.

\begin{defn} A morphism $\alpha:A\rightarrow B$ in $\mmod\Phi$ is \textit{right minimal} if each morphism $\varphi: A\rightarrow A$ which satifies $\alpha\varphi=\alpha$ is an isomorphism. 
\end{defn}

\begin{defn}\label{defn_cover_ffinite}
Let $\mathcal{X}\subseteq\mathcal{F}\subseteq \mmod\Phi$ be full subcategories. An $\mathcal{X}$\textit{-precover} (or \textit{right $\mathcal{X}$-approximation}) of $A\in\mathcal{F}$ is a morphism of the form $\xi :X\rightarrow A$ with $X\in \mathcal{X}$ such that every morphism $\xi':X'\rightarrow A$ with $X'\in \mathcal{X}$ factorizes as:
\begin{align*}
\xymatrix{
X'\ar[rr]^{\xi'} \ar@{-->}[dr]_{\exists}& & A.\\
& X \ar[ru]_{\xi} &
}
\end{align*}
An $\mathcal{X}$\textit{-cover} (or \textit{minimal right $\mathcal{X}$-approximation}) of $A$ is an $\mathcal{X}$-precover of $A$ which is also a right minimal morphism.
The dual notions of precovers and covers are \textit{preenvelopes} (or \textit{left $\mathcal{X}$-approximations}) and \textit{envelopes} (or \textit{minimal left $\mathcal{X}$-approximation}) respectively.

The subcategory $\mathcal{X}$ of $\mathcal{F}$ is called \textit{precovering} (or \textit{contravariantly finite}) if every object in $\mathcal{F}$ has an $\mathcal{X}$-precover. Dually, it is called \textit{preenveloping} (or \textit{covariantly finite}) if every object in $\mathcal{F}$ has an $\mathcal{X}$-preenvelope. If $\mathcal{X}$ is both precovering and preenveloping, it is called \textit{functorially finite} in $\mathcal{F}$.
\end{defn}

\begin{defn}[{\cite[Definition\ 2.2]{IO}}]\label{defn_dct_sub}
Let $\mathcal{F}$ be a full subcategory  of $\mmod\Phi$. We say that $\mathcal{F}$ is a \textit{$d$-cluster tilting subcategory of $\mmod\Phi$} if:
\begin{enumerate}[label=(\alph*)]
    \item $\mathcal{F}=\{ A\in\mmod\Phi\mid \Ext_{\Phi}^{1\,\dots\, d-1} (\mathcal{F},A)=0 \}=\{ A\in\mathcal{A}\mid \Ext_{\Phi}^{1\,\dots\, d-1} (A,\mathcal{F})=0 \}$,
    \item $\mathcal{F}$ is functorially finite in $\mmod\Phi$.
\end{enumerate}
Note that, by \cite[Theorem 3.16]{JG}, such an $\mathcal{F}$ is a $d$-abelian category. Moreover, a $d$-exact sequence in $\mathcal{F}$ is exact in $\mmod\Phi$.
\end{defn}

In the following sections, we will be studying additive subcategories of $\mathcal{F}$ closed under $d$-extensions.

\begin{defn}\label{defn_additive_subcat}
Let $\mathcal{A}$ be an additive category. An \textit{additive subcategory of $\mathcal{A}$} is a full subcategory which is closed under direct sums, direct summands and isomorphisms in $\mathcal{A}$.
\end{defn}

We introduce Yoneda equivalence in order to define what we mean by an additive subcategory closed under $d$-extensions, see \cite[Chapter IV.9]{HS}.

\begin{defn}
Consider two exact sequences in $\mmod\Phi$ with the same end terms:

\begin{align*}
\xymatrix@R=1em{\epsilon:0\ar[r]&B\ar[r]& C^1\ar[r]& C^2\ar[r]&\cdots\ar[r]&C^{d-1}\ar[r]& C^{d}\ar[r]& A\ar[r]&0,\\
\epsilon':0\ar[r]&B\ar[r]& D^1\ar[r]& D^2\ar[r]&\cdots\ar[r]&D^{d-1}\ar[r]& D^{d}\ar[r]& A\ar[r]&0.
}
\end{align*}
We say that $\epsilon$ and $\epsilon'$ satisfy the relation $\xymatrix{\epsilon\ar@{~>}[r]& \epsilon'}$ if there exists a commutative diagram of the form:
\begin{align*}
\xymatrix@C=2em{\epsilon:\ar[d]&0\ar[r]&B\ar[r]\ar@{=}[d]& C^1\ar[r]\ar[d]& C^2\ar[r]\ar[d]&\cdots\ar[r]&C^{d-1}\ar[r]\ar[d]& C^{d}\ar[r]\ar[d]& A\ar[r]\ar@{=}[d]&0\\
\epsilon':&0\ar[r]&B\ar[r]& D^1\ar[r]& D^2\ar[r]&\cdots\ar[r]&D^{d-1}\ar[r]& D^{d}\ar[r]& A\ar[r]&0.
}
\end{align*}
We say that $\epsilon$ and $\epsilon'$ are \textit{Yoneda equivalent}, and write $\epsilon\sim\epsilon'$, if there exists a chain of exact sequences of the above form $\epsilon=\epsilon_0,\,\epsilon_1,\dots,\,\epsilon_t=\epsilon'$ with
\begin{align*}
    \xymatrix{
    \epsilon_0\ar@{~>}[r]& \epsilon_1& \epsilon_2\ar@{~>}[l]\ar@{~>}[r]&\cdots&\epsilon_t. \ar@{~>}[l]
    }
\end{align*}
We denote the equivalence class of $\epsilon$ by $[\epsilon]$ and the set of all equivalence classes of exact sequences of the above form by Yext$^d_\Phi (A,B)$.
\end{defn}
\begin{remark}\label{remark_Yext_Ext}
Note that Yext$_{\Phi}^d(A,B)$ has a group structure, see \cite[Chapter\ IV.9]{HS}. Moreover, by \cite[Theorem 9.1, Chapter IV.9]{HS}, there is a natural equivalence of set-valued bifunctors Yext$_\Phi^d(-,-)\cong\Ext_{\Phi}^d(-,-)$. Let $\mathcal{F}\subseteq \mmod\Phi$ be $d$-cluster tilting. By \cite[Appendix A]{IO}, if $A,\,B\in\mathcal{F}$, then each equivalence class in Yext$_{\Phi}^d(A,B)$ contains a $d$-exact sequence in $\mathcal{F}$ of the form:
\begin{align*}
    \xymatrix{0\ar[r]&B\ar[r]& F^1\ar[r]^{\varphi^1}& F^2\ar[r]^{\varphi^2}&\cdots\ar[r]^{\varphi^{d-2}}& F^{d-1}\ar[r]^{\varphi^{d-1}}& F^d\ar[r]& A\ar[r]&0,
}
\end{align*}
with $\varphi^1,\dots,\,\varphi^{d-1}$ in $\rad_{\mathcal{F}}$ which is unique up to isomorphism.
So, from now on, we will talk about equivalence classes of $d$-exact sequences in $\Ext_{\Phi}^d$-groups.
\end{remark}

\begin{defn}\label{defn_closed_u_ext}
Let $\mathcal{F}\subseteq \mmod\Phi$ be $d$-cluster tilting. We say that an additive subcategory $\mathcal{X}\subseteq\mathcal{F}$ is \textit{closed under $d$-extensions} if each $d$-exact sequence in $\mathcal{F}$ of the form:
\begin{align*}
\xymatrix{0\ar[r]&X^0\ar[r]& A^1\ar[r]& A^2\ar[r]&\cdots\ar[r]& A^{d-1}\ar[r]& A^d\ar[r]& X^{d+1}\ar[r]&0,
}
\end{align*}
with $X^0$, $X^{d+1}$ in $\mathcal{X}$ is Yoneda equivalent to a $d$-exact sequence in $\mathcal{F}$,
\begin{align*}
\xymatrix{0\ar[r]&X^0\ar[r]& X^1\ar[r]& X^2\ar[r]&\cdots\ar[r]& X^{d-1}\ar[r]& X^d\ar[r]& X^{d+1}\ar[r]&0,
}
\end{align*}
with all terms in $\mathcal{X}$.
\end{defn}

\section{$d$-exact sequences in $\mathcal{F}$ and morphisms between them}\label{section_3}
In this section, working in the following setup, we present some properties of $d$-exact sequences that we will be using in later sections.

\begin{setup}\label{setup}
Let $d$ be a fixed positive integer, $k$ a field, $\Phi$ a finite dimensional $k$-algebra and $\mathcal{F}\subseteq \mmod \Phi$ a $d$-cluster tilting subcategory. Then $\mathcal{F}$ is $d$-abelian.
\end{setup}

\begin{defn}[{\cite[Definition 2.11]{JG}}]\label{defn_dpush}
Consider a complex in $\mathcal{F}$ of the form 
\begin{align*}
\xymatrix{A: &A^0\ar[r]^{\alpha^0}& A^1\ar[r]^{\alpha^1}& A^2\ar[r]&\cdots\ar[r]& A^{d-1}\ar[r]^-{\alpha^{d-1}}& A^d
}
\end{align*}
and a morphism $f^0:A^0\rightarrow B^0$ in $\mathcal{F}$. A \textit{$d$-pushout diagram of $A$ along $f^0$} is a chain map
\begin{align}\label{diagram_d-pushout}
\begin{gathered}
\xymatrix{A\ar[d]^-{\varphi}: &A^0\ar[r]^{\alpha^0}\ar[d]^-{f^0}& A^1\ar[r]^{\alpha^1}\ar[d]^-{f^1}& A^2\ar[r]\ar[d]^-{f^2}&\cdots\ar[r]& A^{d-1}\ar[r]^-{\alpha^{d-1}}\ar[d]^-{f^{d-1}}& A^d\ar[d]^-{f^d}\\
B: &B^0\ar[r]_{\beta^0}& B^1\ar[r]_{\beta^1}& B^2\ar[r]&\cdots\ar[r]& B^{d-1}\ar[r]_-{\beta^{d-1}}& B^d
}
\end{gathered}
\end{align}
 with $B^1,\dots,\, B^{d}$ in $\mathcal{F}$ such that in the mapping cone
\begin{align*}
\xymatrix{C(\varphi): &A^0\ar[r]^-{\gamma^{-1}}& A^1\oplus B^0\ar[r]^{\gamma^0}& A^2\oplus B^1\ar[r]&\cdots\ar[r]& A^{d}\oplus B^{d-1}\ar[r]^-{\gamma^{d-1}}& B^d,
}
\end{align*}
the sequence $(\gamma^0,\dots,\,\gamma^{d-1})$ is a $d$-cokernel of $\gamma^{-1}$. The concept of \textit{$d$-pullback diagram} is defined in a dual way.
\end{defn}

\begin{remark}
By \cite[Theorem 3.8]{JG}, for a complex in $\mathcal{F}$ of the form:
\begin{align*}
\xymatrix{A: &A^0\ar[r]^{\alpha^0}& A^1\ar[r]^{\alpha^1}& A^2\ar[r]&\cdots\ar[r]& A^{d-1}\ar[r]^-{\alpha^{d-1}}& A^d
}
\end{align*}
and a morphism $f^0:A^0\rightarrow B^0$ in $\mathcal{F}$, there is always a $d$-pushout diagram of $A$ along $f^0$ of the form (\ref{diagram_d-pushout}). Moreover, if $\alpha^0$ is a monomorphism, then $\beta^0$ is a monomorphism.
\end{remark}

We can use $d$-pushouts to construct morphisms of $d$-exact sequences in $\mathcal{F}$. The next lemma follows from the dual of \cite[Proposition\ 2.12]{JK}.
\begin{lemma}\label{lemma_d-pushout_exists}
Consider a $d$-exact sequence in $\mathcal{F}$ of the form
\begin{align*}
\xymatrix{\delta: 0\ar[r]&A^0\ar[r]^{\alpha^0}& A^1\ar[r]^{\alpha^1}& A^2\ar[r]&\cdots\ar[r]& A^{d-1}\ar[r]^-{\alpha^{d-1}}& A^d\ar[r]^{\alpha^d}& A^{d+1}\ar[r]&0
}
\end{align*}
and a morphism $f^0:A^0\rightarrow B^0$ in $\mathcal{F}$.
Then there is a $d$-pushout diagram of 
\begin{align*}
\xymatrix{A^0\ar[r]^{\alpha^0}&\cdots\ar[r]^{\alpha^{d-1}}&A^d}
\end{align*}
along $f^0$ and it induces a morphism of $d$-exact sequences of the form:
\begin{align}\label{diagram_d-exactpushout}
\begin{gathered}
\xymatrix @C=2em{\delta\ar[d]^-{f}:&0\ar[r] &A^0\ar[r]^{\alpha^0}\ar[d]^-{f^0}& A^1\ar[r]^{\alpha^1}\ar[d]^-{f^1}& A^2\ar[r]\ar[d]^-{f^2}&\cdots\ar[r]& A^{d-1}\ar[r]^-{\alpha^{d-1}}\ar[d]^-{f^{d-1}}& A^d\ar[d]^-{f^d}\ar[r]^{\alpha^d}& A^{d+1}\ar@{=}[d]\ar[r]&0\\
\epsilon: & 0\ar[r]&B^0\ar[r]_{\beta^0}& B^1\ar[r]_{\beta^1}& B^2\ar[r]&\cdots\ar[r]& B^{d-1}\ar[r]_-{\beta^{d-1}}& B^d\ar[r]_{\beta^d}& A^{d+1}\ar[r]&0.
}
\end{gathered}
\end{align}
\end{lemma}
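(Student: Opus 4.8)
The plan is to obtain the lower row and the chain map in two stages: first invoke the existence of the $d$-pushout, then produce the missing morphism $\beta^d$ by hand. By the Remark following Definition \ref{defn_dpush} (which rests on \cite[Theorem 3.8]{JG}), the complex $A^0\xrightarrow{\alpha^0}\cdots\xrightarrow{\alpha^{d-1}}A^d$ admits a $d$-pushout diagram along $f^0$, giving maps $f^1,\dots,f^d$ and $\beta^0,\dots,\beta^{d-1}$ with $B^1,\dots,B^d\in\mathcal{F}$; since $\alpha^0$ is a monomorphism (being the leftmost map of the $d$-exact sequence $\delta$), the same Remark gives that $\beta^0$ is a monomorphism as well. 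The defining property of this diagram is that $(\gamma^0,\dots,\gamma^{d-1})$ is a $d$-cokernel of $\gamma^{-1}$ in the mapping cone $C(\varphi)$.

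Next I would construct $\beta^d\colon B^d\to A^{d+1}$. Consider the morphism $(\alpha^d,0)\colon A^d\oplus B^{d-1}\to A^{d+1}$. Writing out the block form of $\gamma^{d-2}$ and using $\alpha^d\alpha^{d-1}=0$, a short computation shows $(\alpha^d,0)\circ\gamma^{d-2}=0$. Applying $\Hom_\Phi(-,A^{d+1})$ to the $d$-cokernel $(\gamma^0,\dots,\gamma^{d-1})$ and reading off exactness at $\Hom_\Phi(A^d\oplus B^{d-1},A^{d+1})$ then produces a map $\beta^d$ with $\beta^d\gamma^{d-1}=(\alpha^d,0)$. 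Since $\gamma^{d-1}$ has components $f^d$ and $\beta^{d-1}$, this single identity simultaneously yields $\beta^d f^d=\alpha^d$, i.e.\ commutativity of the rightmost square of (\ref{diagram_d-exactpushout}) against $1_{A^{d+1}}$, and $\beta^d\beta^{d-1}=0$, so that $\epsilon$ is a complex. This defines the chain map in (\ref{diagram_d-exactpushout}).

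It then remains to check that the lower row $\epsilon$ is $d$-exact. Here I would use axiom (A2): as $\beta^0$ is already a monomorphism, it suffices to prove that $B^1\xrightarrow{\beta^1}\cdots\xrightarrow{\beta^{d-1}}B^d\xrightarrow{\beta^d} A^{d+1}$ is a $d$-cokernel of $\beta^0$, that is, that $0\to\Hom_\Phi(A^{d+1},C)\to\Hom_\Phi(B^d,C)\to\cdots\to\Hom_\Phi(B^1,C)\to\Hom_\Phi(B^0,C)$ is exact for every $C\in\mathcal{F}$. The idea is to compare this sequence with two known exact ones obtained by applying $\Hom_\Phi(-,C)$: the $d$-cokernel $(\gamma^0,\dots,\gamma^{d-1})$ of the mapping cone, and the $d$-cokernel $A^1\to\cdots\to A^{d+1}$ of $\alpha^0$ coming from the $d$-exactness of $\delta$. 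Because $\Hom_\Phi(A^{i+1}\oplus B^i,C)=\Hom_\Phi(A^{i+1},C)\oplus\Hom_\Phi(B^i,C)$ and the dualized differentials are block-triangular in $(\alpha^\bullet)^\ast,(f^\bullet)^\ast,(\beta^\bullet)^\ast$, the dualized mapping cone is exactly the mapping cone of the dualized chain map, and its long exact sequence in cohomology together with the acyclicity of the dualized $\delta$ forces the sequence above to be exact. This is precisely the content of the dual of \cite[Proposition\ 2.12]{JK}. Once the $d$-cokernel property is established, axiom (A2) gives that $\epsilon$ is $d$-exact, completing the morphism of $d$-exact sequences.

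The step I expect to be the main obstacle is this last one: the bookkeeping required to identify the dualized mapping cone with the mapping cone of $f^\ast$ and to extract from its long exact sequence the exactness of the $\beta$-sequence. By contrast, the construction of $\beta^d$ and the reduction via axiom (A2) are comparatively formal.
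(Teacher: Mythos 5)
Your proposal is correct, and it effectively supplies the proof that the paper outsources: the paper's entire justification of this lemma is the sentence preceding it, namely that it ``follows from the dual of \cite[Proposition 2.12]{JK}'', together with the remark (citing \cite[Theorem 3.8]{JG}) giving existence of the $d$-pushout and the fact that $\beta^0$ is a monomorphism --- exactly the two inputs you start from. Your reconstruction of the cited result is sound. The construction of $\beta^d$ works: writing $\gamma^{d-2}$ in block form, the identity $(\alpha^d,0)\circ\gamma^{d-2}=0$ follows from $\alpha^d\alpha^{d-1}=0$, and exactness of the dualized mapping cone at $\Hom_\Phi(A^d\oplus B^{d-1},A^{d+1})$ (the $d$-cokernel property) produces $\beta^d$ with $\beta^d\gamma^{d-1}=(\alpha^d,0)$, hence $\beta^d f^d=\alpha^d$ and $\beta^d\beta^{d-1}=0$; Jasso's sign conventions on the cone at worst force a sign change on $\beta^d$, which is harmless. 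The reduction via axiom (A2) is legitimate because $\mathcal{F}$ is $d$-abelian by \cite[Theorem 3.16]{JG}. Finally, the long-exact-sequence step does close: the cone of the dualized chain map has vanishing cohomology in the relevant degrees, the dualized truncated complex $\Hom_\Phi(A^d,C)\to\cdots\to\Hom_\Phi(A^0,C)$ is acyclic in middle degrees with $H^0\cong\Hom_\Phi(A^{d+1},C)$ by $d$-exactness of $\delta$, so the $B$-complex is acyclic in middle degrees; at the end spot, the relation $\beta^d f^d=\alpha^d$ shows that the composite of $(\beta^d)^*$ with the isomorphism $H^0(\Hom_\Phi(B^\bullet,C))\to H^0(\Hom_\Phi(A^\bullet,C))$ induced by $(f^d)^*$ is the canonical identification with $\Hom_\Phi(A^{d+1},C)$, whence $(\beta^d)^*$ is injective with image $\ker(\beta^{d-1})^*$. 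The trade-off between the two treatments is clear: the paper's one-line citation is economical, while your route makes the lemma self-contained given only \cite[Theorem 3.8]{JG} and the $d$-abelian axioms, at the cost of the mapping-cone bookkeeping you correctly identify as the main labour.
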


\begin{notation}
For $A,\,B$ in $\mathcal{F}$, we use the notation $(A,B):=\Hom_{\mathcal{F}}(A,B)$.
\end{notation}

\begin{lemma}\label{lemma_null-homotopic}
Consider a morphism $h$ of $d$-exact sequences in $\mathcal{F}$ of the form:
\begin{align*}
\xymatrix@C=2em{\delta\ar[d]^-{h}:&0\ar[r] &A^0\ar[r]^{\alpha^0}\ar[d]^>>>>>>{h^0}& A^1\ar[r]^{\alpha^1}\ar[d]^>>>>>>{h^1}\ar@{-->}[ld]^{s^1}& A^2\ar[r]\ar[d]^>>>>>>{h^2}\ar@{-->}[ld]^{s^2}&\cdots\ar[r]& A^{d-1}\ar[r]^-{\alpha^{d-1}}\ar[d]^>>>>>>{h^{d-1}}& A^d\ar[d]^>>>>>>{h^d}\ar[r]^{\alpha^d}\ar@{-->}[ld]^{s^d}& A^{d+1}\ar[d]^>>>>>>{h^{d+1}}\ar[r]\ar@{-->}[ld]^{s^{d+1}}&0\\
\epsilon: & 0\ar[r]&B^0\ar[r]_{\beta^0}& B^1\ar[r]_{\beta^1}& B^2\ar[r]&\cdots\ar[r]& B^{d-1}\ar[r]_-{\beta^{d-1}}& B^d\ar[r]_{\beta^d}& B^{d+1}\ar[r]&0.
}
\end{align*}
The following are equivalent:
\begin{enumerate}[label=(\alph*)]
    \item there is a morphism $s^{d+1}:A^{d+1}\rightarrow B^d$ such that $\beta^d s^{d+1}=h^{d+1},$
    \item there is a morphism $s^{1}:A^{1}\rightarrow B^0$ such that $s^{1} \alpha^0=h^{0},$
    \item the morphism $h:\delta\rightarrow\epsilon$ is null-homotopic.
\end{enumerate}
\end{lemma}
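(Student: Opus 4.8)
The plan is to prove the two trivial implications (c) $\Rightarrow$ (a) and (c) $\Rightarrow$ (b) directly from the definition, and then establish the two substantive implications (a) $\Rightarrow$ (c) and (b) $\Rightarrow$ (c). Recall that a null-homotopy of $h$ is a family of morphisms $s^i\colon A^i\to B^{i-1}$ for $1\le i\le d+1$ satisfying $h^0=s^1\alpha^0$, then $h^i=\beta^{i-1}s^i+s^{i+1}\alpha^i$ for $1\le i\le d$, and finally $h^{d+1}=\beta^d s^{d+1}$. Thus a null-homotopy supplies in particular the morphisms $s^{d+1}$ and $s^1$ demanded by (a) and (b), which settles (c) $\Rightarrow$ (a) and (c) $\Rightarrow$ (b). Moreover the statement is self-dual: the duality $D=\Hom_k(-,k)$ carries $\mathcal{F}$ to a $d$-cluster tilting subcategory of $\mmod\Phi^{op}$, reverses a $d$-exact sequence and its chain map, interchanges conditions (a) and (b), and preserves null-homotopy. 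Hence it suffices to prove (a) $\Rightarrow$ (c), and (b) $\Rightarrow$ (c) will follow by applying this to $Dh$.

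For (a) $\Rightarrow$ (c): given $s^{d+1}$ with $\beta^d s^{d+1}=h^{d+1}$, I would build $s^d,s^{d-1},\dots,s^1$ by descending induction. At stage $i$ (for $i=d,d-1,\dots,1$), having constructed $s^{i+1}$, put $g^i:=h^i-s^{i+1}\alpha^i\colon A^i\to B^i$ and look for $s^i$ with $\beta^{i-1}s^i=g^i$. The decisive structural input is that, the bottom row being $d$-exact, the diagram $B^0\to\cdots\to B^d$ is a $d$-kernel of $\beta^d$; applying $(A^i,-)$ therefore gives an exact sequence, and exactness at $(A^i,B^i)$ says that $g^i$ lifts along $\beta^{i-1}$ exactly when $\beta^i g^i=0$. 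This vanishing I would check by a short computation from the chain-map identity $\beta^i h^i=h^{i+1}\alpha^i$, the inductive description $\beta^i s^{i+1}=h^{i+1}-s^{i+2}\alpha^{i+1}$, and the complex relation $\alpha^{i+1}\alpha^i=0$; at the top stage $i=d$ one uses $\beta^d s^{d+1}=h^{d+1}$ directly (equivalently, the convention $s^{d+2}=0$).

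The step I expect to require the most care is the final one. Having produced $s^1$ with $h^1=\beta^0 s^1+s^2\alpha^1$, the remaining relation $h^0=s^1\alpha^0$ is not automatic from the lifting and must be verified separately. I would show $\beta^0(s^1\alpha^0-h^0)=0$ by the same type of computation (expanding $\beta^0 s^1=g^1$ and using $\alpha^1\alpha^0=0$ together with $\beta^0 h^0=h^1\alpha^0$), and then invoke that $\beta^0$ is a monomorphism—the bottom row is $d$-exact, so it begins with a monomorphism—to conclude $s^1\alpha^0=h^0$. This assembles the full family $s^1,\dots,s^{d+1}$ into a null-homotopy, proving (c).

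Finally, (b) $\Rightarrow$ (c) is the dual argument, which I would either spell out or deduce via $D$. Given $s^1$ with $s^1\alpha^0=h^0$, one constructs $s^2,\dots,s^{d+1}$ by ascending induction, at each stage colifting $g^i:=h^i-\beta^{i-1}s^i$ through $\alpha^i$; here the enabling fact is that $A^1\to\cdots\to A^{d+1}$ is a $d$-cokernel of $\alpha^0$, so applying $(-,B^i)$ and using exactness shows the colift exists precisely when $g^i\alpha^{i-1}=0$, which the chain-map and complex relations guarantee. The terminal relation $h^{d+1}=\beta^d s^{d+1}$ is then forced by the epimorphism $\alpha^d$ coming from $d$-exactness of the top row, completing the proof.
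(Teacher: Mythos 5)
Your proposal is correct and follows essentially the same route as the paper: descending induction on $s^d,\dots,s^1$ via the $d$-kernel property of the bottom row, the same cancellation $\beta^i g^i = s^{i+2}\alpha^{i+1}\alpha^i = 0$ at each lifting step, the monomorphism $\beta^0$ to force the terminal relation $s^1\alpha^0 = h^0$, and duality for the implication starting from (b). The only cosmetic difference is that the paper phrases the conclusion as (a) $\Rightarrow$ (b) and (c) rather than (a) $\Rightarrow$ (c), which is logically the same decomposition.
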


\begin{proof}
It is clear that (c) implies both (a) and (b). Assume (a) holds. By the definition of $d$-kernel, applying $(A^d,-)$ to $\epsilon$, we obtain the exact sequence:
\begin{align*}
\xymatrix{
    (A^d,B^{d-1})\ar[r]^-{\beta^{d-1}_{*}}&(A^d,B^d)\ar[r]^-{\beta^{d}_{*}}& (A^d, B^{d+1}).
    }
\end{align*}
Note that
\begin{align*}
\beta^{d}_{*} (h^d-s^{d+1}\alpha^d)=\beta^d h^d-\beta^d s^{d+1}\alpha^d=\beta^d h^d-h^{d+1}\alpha^d=0,
\end{align*}
so that $h^d-s^{d+1}\alpha^d$ is in $\ker \beta^{d}_{*}=\im \beta^{d-1}_{*}$. So there exists a morphism $s^d:A^d\rightarrow B^{d-1}$ such that $\beta^{d-1}s^d=h^d-s^{d+1}\alpha^d$. Inductively, for $i=d-1,\, d-2,\dots,\,1$, we obtain $s^i:A^i\rightarrow B^{i-1}$ such that $h^i=\beta^{i-1} s^i+s^{i+1}\alpha^i$. Then,
\begin{align*}
    \beta^0 s^1 \alpha^0=h^1\alpha^0-s^2\alpha^1\alpha^0=h^1\alpha^0=\beta^0 h^0.
\end{align*}
Since $\beta^0$ is a monomorphism, it follows that $s^1 \alpha^0=h^0$. So (b) and (c) hold. Dually, (b) implies both (a) and (c).
\end{proof}

The special case when $\delta=\epsilon$ and $h$ is the identity on $\delta$ in Lemma \ref{lemma_null-homotopic} gives the following. 
\begin{corollary}\label{coro_split}
Consider a $d$-exact sequence in $\mathcal{F}$ of the form
\begin{align*}
\xymatrix{\delta: 0\ar[r]&A^0\ar[r]^{\alpha^0}& A^1\ar[r]^{\alpha^1}& A^2\ar[r]&\cdots\ar[r]& A^{d-1}\ar[r]^-{\alpha^{d-1}}& A^d\ar[r]^{\alpha^d}& A^{d+1}\ar[r]&0.
}
\end{align*}
The following are equivalent:
\begin{enumerate}[label=(\alph*)]
    \item $\alpha^0$ is a split monomorphism,
    \item $\alpha^d$ is a split epimorphism,
    \item the identity on $\delta$ is null-homotopic.
\end{enumerate}
If any, and so all, of the above hold, we say that $\delta$ is a \textit{split $d$-exact sequence}.
\end{corollary}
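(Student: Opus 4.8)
The plan is to deduce this directly from Lemma \ref{lemma_null-homotopic} by taking $\epsilon=\delta$ and letting $h=1_\delta$ be the identity chain map on $\delta$. First I would note that $1_\delta\colon\delta\rightarrow\delta$ is genuinely a morphism of $d$-exact sequences in the sense of Definition \ref{defn_d_Jasso}(d): each square in the diagram is of the form $\alpha^i 1_{A^i}=1_{A^{i+1}}\alpha^i$ and so commutes trivially. Hence Lemma \ref{lemma_null-homotopic} applies with $B^i=A^i$, $\beta^i=\alpha^i$ and $h^i=1_{A^i}$ for all $i$, and it suffices to read off what its three conditions say in this situation.

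Next I would translate the three conditions of the lemma under this specialisation. Condition (a) of the lemma asks for a morphism $s^{d+1}\colon A^{d+1}\rightarrow A^d$ with $\alpha^d s^{d+1}=h^{d+1}=1_{A^{d+1}}$, which is exactly the statement that $\alpha^d$ is a split epimorphism, i.e.\ condition (b) of the corollary. Condition (b) of the lemma asks for a morphism $s^1\colon A^1\rightarrow A^0$ with $s^1\alpha^0=h^0=1_{A^0}$, which is exactly the statement that $\alpha^0$ is a split monomorphism, i.e.\ condition (a) of the corollary. Condition (c) of the lemma, that $h=1_\delta$ is null-homotopic, is verbatim condition (c) of the corollary. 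Since Lemma \ref{lemma_null-homotopic} establishes the mutual equivalence of its conditions (a), (b) and (c), the equivalence of conditions (a), (b) and (c) of the corollary follows at once.

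I do not expect a genuine obstacle here, since the result is a direct specialisation. The only points that warrant a moment's attention are verifying that the identity chain map qualifies as a morphism of $d$-exact sequences and correctly matching the split-monomorphism and split-epimorphism phrasing of the corollary against the existence-of-section statements (a) and (b) of the lemma; once these identifications are made, no further computation is needed.
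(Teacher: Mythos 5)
Your proposal is correct and is exactly the paper's own proof: the paper derives this corollary as the special case of Lemma \ref{lemma_null-homotopic} with $\delta=\epsilon$ and $h$ the identity, precisely as you do. Your translation of the lemma's conditions (a), (b), (c) into the corollary's (b), (a), (c) respectively is accurate, and the extra check that the identity chain map is a morphism of $d$-exact sequences is a sound, if routine, verification.
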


\begin{remark}\label{remark_Ext_pull_push}
By Remark \ref{remark_Yext_Ext}, if $A^0,\,A^{d+1}\in\mathcal{F}$, then every element in $\Ext_{\Phi}^d(A^{d+1},A^0)$ is given by a $d$-exact sequence in $\mathcal{F}$. Consider a $d$-exact sequence in $\mathcal{F}$ of the form 
\begin{align*}
\xymatrix{\delta: 0\ar[r]&A^0\ar[r]^{\alpha^0}& A^1\ar[r]^{\alpha^1}& A^2\ar[r]&\cdots\ar[r]& A^{d-1}\ar[r]^-{\alpha^{d-1}}& A^d\ar[r]^{\alpha^d}& A^{d+1}\ar[r]&0.
}
\end{align*}
\begin{enumerate}[label=(\alph*)]
\item
By \cite[Lemma 1.6]{JJ}, if $[\delta]=0$ in $\Ext_{\Phi}^d(A^{d+1}, A^0)$, then $\delta$ is a split $d$-exact sequence. Moreover, it can be checked that if $\delta$ is a split $d$-exact sequence, then $[\delta]=0$.
\item Given a morphism $f^0: A^0\rightarrow B^{0}$ in $\mathcal{F}$, we can look at the morphism
\begin{align*}
    \Ext_{\Phi}^d(A^{d+1},f^0):\Ext_{\Phi}^d(A^{d+1}, A^0)\rightarrow \Ext_{\Phi}^d(A^{d+1}, B^0)
\end{align*}
in terms of $d$-exact sequences in $\mathcal{F}$.
For $\delta$ as above, $f^0\cdot\delta:=\Ext_{\Phi}^d(A^{d+1},f^0)(\delta)$ is given by extending a $d$-pushout diagram as in (\ref{diagram_d-exactpushout}) from Lemma \ref{lemma_d-pushout_exists}:
\begin{align*}
\xymatrix@C=2em{\delta:\,0\ar[d]^f\ar[r]& A^0\ar[r]^{\alpha^0}\ar[d]^-{f^0}& A^1\ar[r]^{\alpha^1}\ar[d]^-{f^1}& A^2\ar[r]\ar[d]^-{f^2}&\cdots\ar[r]& A^{d-1}\ar[r]^-{\alpha^{d-1}}\ar[d]^-{f^{d-1}}& A^d\ar[d]^-{f^d}\ar[r]^{\alpha^d}& A^{d+1}\ar@{=}[d]\ar[r]&0\\
f^0\cdot\delta: 0\ar[r]&B^0\ar[r]_{\beta^0}& B^1\ar[r]_{\beta^1}& B^2\ar[r]&\cdots\ar[r]& B^{d-1}\ar[r]_-{\beta^{d-1}}& B^d\ar[r]_{\beta^d}& A^{d+1}\ar[r]&0.
}
\end{align*}
Dually, for $g^{d+1}:B^{d+1}\rightarrow A^{d+1}$ in $\mathcal{F}$, we have that $\delta\cdot g^{d+1}:=\Ext_{\Phi}^d(g^{d+1},A^0)(\delta)\in \Ext_{\Phi}^d(B^{d+1}, A^0)$ is given by a $d$-pullback diagram.
This construction can be seen in the $d=1$ case in \cite[Section III.1 and Theorem III.2.4]{HS}. The case for general $d\geq 1$ follows by methods similar to those used in \cite[Section IV.9]{HS}.
\end{enumerate}
\end{remark}

\begin{lemma}\label{lemma_complete_to_morph}
Suppose there are $d$-exact sequences $\delta$ and $\epsilon$ in $\mathcal{F}$ and, for some $0\leq i<j\leq d$, there are morphims $f^i,\, f^{i+1},\dots,\,  f^j$ such that $\beta^lf^l=f^{l+1}\alpha^l$ for $i\leq l\leq j-1$, \textbf{i.e.} the following diagram commutes:
\begin{align*}
\xymatrix@C=1.6em{\delta:0\ar[r] &A^0\ar[r]^{\alpha^0}\ar@{-->}[d]^-{f^0}&\cdots\ar[r]& A^{i-1}\ar[r]^-{\alpha^{i-1}}\ar@{-->}[d]^{f^{i-1}}& A^{i}\ar[r]^{\alpha^i}\ar[d]^{f^i}&  \cdots \ar[r]^{\alpha^{j-1}}& A^{j}\ar[r]^{\alpha^j}\ar[d]^{f^{j}}& A^{j+1}\ar[r]^{\alpha^{j+1}}\ar@{-->}[d]^{f^{j+1}}&\cdots\ar[r]^{\alpha^{d+1}}&  A^{d+1}\ar@{-->}[d]^{f^{d+1}}\ar[r]&0\\
\epsilon:0\ar[r]&B^0\ar[r]_{\beta^0}&\cdots\ar[r]& B^{i-1}\ar[r]_-{\beta^{i-1}} & B^i\ar[r]_{\beta^i}& \cdots \ar[r]_{\beta^{j-1}}&B^j\ar[r]_{\beta^j} & B^{j+1}\ar[r]_{\beta^{j+1}}& \cdots\ar[r]_{\beta^{d+1}} & B^{d+1}\ar[r]&0.
}
\end{align*}
Then, for $0\leq l\leq i-1$ and $j+1\leq l\leq d+1$, there exist morphisms $f^l:A^l\rightarrow B^l$ completing $f^i,\, f^{i+1},\dots,\,  f^j$ to a morphism of $d$-exact sequences.
\end{lemma}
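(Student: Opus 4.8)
The plan is to construct the missing morphisms by two separate inductions, extending the given partial chain map to the right (producing $f^{j+1},\dots,f^{d+1}$) and to the left (producing $f^{i-1},\dots,f^{0}$); each half is a diagram chase of exactly the type already carried out in the proof of Lemma~\ref{lemma_null-homotopic}. As a preliminary, I would record that $\delta$ and $\epsilon$ are complexes, i.e.\ $\alpha^{l}\alpha^{l-1}=0$ and $\beta^{l}\beta^{l-1}=0$: this is immediate from $d$-exactness, since evaluating the exact $\Hom$-sequences of Definition~\ref{defn_d_Jasso} on identity morphisms forces consecutive compositions to vanish.

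For the right-hand extension I argue by induction on $l$, from $l=j$ up to $l=d$, constructing $f^{l+1}\colon A^{l+1}\to B^{l+1}$ with $f^{l+1}\alpha^{l}=\beta^{l}f^{l}$ out of $f^{l}$. The morphism $\beta^{l}f^{l}\colon A^{l}\to B^{l+1}$ satisfies $\beta^{l}f^{l}\alpha^{l-1}=\beta^{l}\beta^{l-1}f^{l-1}=0$, using the commuting square at index $l-1$ (given when $l=j$, and produced at the previous step otherwise) together with $\beta^{l}\beta^{l-1}=0$. Applying $\Hom_{\mathcal{F}}(-,B^{l+1})$ to $\delta$ and using that $A^{1}\to\cdots\to A^{d+1}$ is a $d$-cokernel of $\alpha^{0}$, the resulting sequence is exact at $\Hom_{\mathcal{F}}(A^{l},B^{l+1})$ for $1\leq l\leq d$; hence $\beta^{l}f^{l}\in\ker(\alpha^{l-1})^{*}=\im(\alpha^{l})^{*}$ and there is an $f^{l+1}$ with $f^{l+1}\alpha^{l}=\beta^{l}f^{l}$. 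Every index used lies in $\{1,\dots,d\}$ since $j\geq 1$, so the required exactness is always available; when $j=d$ only $f^{d+1}$ is produced.

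The left-hand extension, needed only when $i\geq 1$, is formally dual and uses the $d$-kernel structure of $\epsilon$. By induction on $l$ from $l=i$ down to $l=1$, I construct $f^{l-1}\colon A^{l-1}\to B^{l-1}$ with $\beta^{l-1}f^{l-1}=f^{l}\alpha^{l-1}$ out of $f^{l}$. Here $f^{l}\alpha^{l-1}\colon A^{l-1}\to B^{l}$ satisfies $\beta^{l}f^{l}\alpha^{l-1}=f^{l+1}\alpha^{l}\alpha^{l-1}=0$ by the commuting square at index $l$ and $\alpha^{l}\alpha^{l-1}=0$. Applying $\Hom_{\mathcal{F}}(A^{l-1},-)$ to $\epsilon$ and using that $B^{0}\to\cdots\to B^{d}$ is a $d$-kernel of $\beta^{d}$ (exact at $\Hom_{\mathcal{F}}(A^{l-1},B^{l})$ for $1\leq l\leq d$), this puts $f^{l}\alpha^{l-1}$ in $\ker(\beta^{l})_{*}=\im(\beta^{l-1})_{*}$, yielding $f^{l-1}$. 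Again all indices used lie in $\{1,\dots,d\}$, since $l$ ranges between $1$ and $i\leq d-1$.

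I expect the only delicate point to be the index bookkeeping: at each step one must confirm that the $\Hom$-term where exactness is invoked sits in the middle range $\{1,\dots,d\}$, so that the genuine $d$-kernel/$d$-cokernel exactness applies rather than merely the injectivity at the two ends, and that the composite entering the relevant kernel really vanishes, which combines the complex property with a square already known to commute. The boundary cases $i=0$ (no left extension) and $j=d$ (right extension stopping at $f^{d+1}$) should be flagged but present no difficulty.
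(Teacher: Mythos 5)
Your proposal is correct and follows essentially the same route as the paper's own (very terse) proof: the paper likewise constructs the right-hand morphisms $f^{j+1},\dots,f^{d+1}$ from the $d$-cokernel property of $\alpha^0$ in $\delta$ and the left-hand morphisms $f^0,\dots,f^{i-1}$ from the $d$-kernel property of $\beta^d$ in $\epsilon$, leaving the inductive diagram chase implicit. Your write-up simply supplies the details (the complex property, the index bookkeeping, and the two inductions) that the paper omits.
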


\begin{proof}
To construct the morphisms $f^l$ for $0\leq l\leq i-1$, use the fact that
\begin{align*}
   \xymatrix{
   0\ar[r]& B^0\ar[r]^{\beta^0}& B^1\ar[r]&\cdots\ar[r]^{\beta^{d-1}}& B^d
   }
\end{align*}
is a $d$-kernel of $\beta^d:B^d\rightarrow B^{d+1}$. To construct $f^l$ for $j+1\leq l\leq d+1$, use the fact that
\begin{align*}
   \xymatrix{
   A^1\ar[r]^{\alpha^1}& A^2\ar[r]&\cdots\ar[r]^{\alpha^{d}}& A^{d+1}\ar[r]&0
   }
\end{align*}
is a $d$-cokernel of $\alpha^0:A^0\rightarrow A^1$.
\end{proof}

We recall the definition of Jacobson radical of $\mathcal{F}$, see for example \cite[Definition A.3.3]{A}.

\begin{defn}
The \textit{Jacobson radical of $\mathcal{F}$} is the two sided ideal $\rad_\mathcal{F}$ in $\mathcal{F}$ defined by the formula
\begin{align*}
    \rad_\mathcal{F}(A,B)=\{ \alpha: A\rightarrow B\mid 1_A-\beta\alpha \text{ is invertible for any } \beta:B\rightarrow A \},
\end{align*}
for all objects $A$ and $B$ in $\mathcal{F}$.
\end{defn}

The following lemma can be deduced from \cite[Lemma 1.1]{JK}.
\begin{lemma}\label{lemma_rad_rmin}
Consider a $d$-exact sequence in $\mathcal{F}$ of the form
\begin{align*}
\xymatrix{\delta: 0\ar[r]&A^0\ar[r]^{\alpha^0}& A^1\ar[r]^{\alpha^1}& A^2\ar[r]&\cdots\ar[r]& A^{d-1}\ar[r]^-{\alpha^{d-1}}& A^d\ar[r]^{\alpha^d}& A^{d+1}\ar[r]&0.
}
\end{align*}
For $i=1,\dots,\, d$, we have that $\alpha^i$ is right minimal if and only if $\alpha^{i-1}$ is in $\rad_{\mathcal{F}}$.
\end{lemma}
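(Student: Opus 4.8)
The plan is to argue locally at position $i$ and reduce everything to the standard invertibility symmetry: for morphisms $\alpha\colon X\to Y$ and $\beta\colon Y\to X$ in an additive category, $1_X-\beta\alpha$ is invertible if and only if $1_Y-\alpha\beta$ is invertible. This is the content I would borrow from \cite[Lemma 1.1]{JK}. The only structural input needed is the exactness obtained by applying $(A^i,-)=\Hom_\mathcal{F}(A^i,-)$ to the $d$-kernel $A^0\to\cdots\to A^d$ of $\alpha^d$ coming from $\delta$: by Definition \ref{defn_d_Jasso}(a), for each $1\le i\le d$ the sequence
\[ 0\to (A^i,A^0)\to\cdots\to (A^i,A^{i-1})\xrightarrow{\alpha^{i-1}_*}(A^i,A^i)\xrightarrow{\alpha^i_*}(A^i,A^{i+1})\to\cdots \]
is exact, so at the node $(A^i,A^i)$ we have $\ker\alpha^i_*=\im\alpha^{i-1}_*$. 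In words: an endomorphism $\psi$ of $A^i$ satisfies $\alpha^i\psi=0$ precisely when it factors as $\psi=\alpha^{i-1}\gamma$ for some $\gamma\colon A^i\to A^{i-1}$. I would also use that $\delta$ is a complex, so $\alpha^i\alpha^{i-1}=0$.

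For the implication $\alpha^{i-1}\in\rad_\mathcal{F}\Rightarrow\alpha^i$ right minimal, take any $\varphi\colon A^i\to A^i$ with $\alpha^i\varphi=\alpha^i$. Then $\alpha^i(\varphi-1_{A^i})=0$, so by the exactness above there is $\gamma\colon A^i\to A^{i-1}$ with $\varphi-1_{A^i}=\alpha^{i-1}\gamma$, i.e. $\varphi=1_{A^i}+\alpha^{i-1}\gamma$. Since $\alpha^{i-1}\in\rad_\mathcal{F}$, the endomorphism $1_{A^{i-1}}+\gamma\alpha^{i-1}$ is invertible (apply the defining property of $\rad_\mathcal{F}$ with $\beta=-\gamma$), and by the symmetry fact so is $\varphi=1_{A^i}+\alpha^{i-1}\gamma$. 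Hence $\varphi$ is an isomorphism, and $\alpha^i$ is right minimal.

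For the converse I would argue contrapositively: if $\alpha^{i-1}\notin\rad_\mathcal{F}$, choose $\beta\colon A^i\to A^{i-1}$ with $1_{A^{i-1}}-\beta\alpha^{i-1}$ not invertible, and set $\varphi=1_{A^i}-\alpha^{i-1}\beta$. Since $\delta$ is a complex, $\alpha^i\varphi=\alpha^i-\alpha^i\alpha^{i-1}\beta=\alpha^i$, while the symmetry fact shows $\varphi$ is not invertible, hence not an isomorphism. Thus $\varphi$ witnesses that $\alpha^i$ is not right minimal, completing the equivalence.

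The argument is short; the one point requiring care is the factorization step, namely that for each $i$ in the stated range $(A^i,-)$ applied to the $d$-kernel really is exact at the node $(A^i,A^i)$. This is where the full $d$-kernel axiom, and not merely the complex condition $\alpha^i\alpha^{i-1}=0$, is used, so I would check the boundary cases $i=1$ and $i=d$ explicitly to confirm that $(A^i,A^i)$ is an internal node of the asserted exact sequence (for $i=1$ this is $\ker\alpha^1_*=\im\alpha^0_*$, and for $i=d$ it is $\ker\alpha^d_*=\im\alpha^{d-1}_*$). Everything else is formal manipulation together with the invertibility symmetry lemma.
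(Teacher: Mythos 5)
Your proof is correct and is essentially the paper's argument: the paper proves this lemma simply by citing \cite[Lemma 1.1]{JK}, and your factorization-plus-invertibility-symmetry argument is precisely the standard proof of that cited fact, with the $d$-kernel axiom supplying the required exactness of $(A^i,A^{i-1})\xrightarrow{\alpha^{i-1}_*}(A^i,A^i)\xrightarrow{\alpha^i_*}(A^i,A^{i+1})$ at the middle node for every $i=1,\dots,d$. Your explicit verification of the boundary cases $i=1$ and $i=d$ is a worthwhile detail that the paper leaves implicit in the reference.
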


\begin{lemma}\label{lemma_iso_d-exact_seq}
Consider a $d$-exact sequence in $\mathcal{F}$ of the form
\begin{align*}
\xymatrix{\delta: 0\ar[r]&A^0\ar[r]^{\alpha^0}& A^1\ar[r]^{\alpha^1}& A^2\ar[r]&\cdots\ar[r]& A^{d-1}\ar[r]^-{\alpha^{d-1}}& A^d\ar[r]^{\alpha^d}& A^{d+1}\ar[r]&0,
}
\end{align*}
with $\alpha^0,\dots,\, \alpha^{d-1}$ in $\rad_\mathcal{F}$ and a morphism of $d$-exact sequences:
\begin{align*}
\xymatrix@C=2em{\delta:\ar[d]^f &0\ar[r]&A^0\ar[r]^{\alpha^0}\ar[d]^{f^0}& A^1\ar[r]^{\alpha^1}\ar[d]^{f^1}& A^2\ar[r]\ar[d]^{f^2}&\cdots\ar[r]& A^{d-1}\ar[r]^-{\alpha^{d-1}}\ar[d]^{f^{d-1}}& A^d\ar[r]^{\alpha^d}\ar[d]^{f^d}& A^{d+1}\ar[r]\ar@{=}[d]&0\\
\delta: &0\ar[r]&A^0\ar[r]_{\alpha^0}& A^1\ar[r]_{\alpha^1}& A^2\ar[r]&\cdots\ar[r]& A^{d-1}\ar[r]_-{\alpha^{d-1}}& A^d\ar[r]_{\alpha^d}& A^{d+1}\ar[r]&0,
}
\end{align*}
where $f^d$ is an isomorphism. Then $f^0,\dots,\, f^{d-1}$ are all isomorphisms.
\end{lemma}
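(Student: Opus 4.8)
We are given a self-morphism $f$ of a $d$-exact sequence $\delta$ whose connecting maps $\alpha^0,\dots,\alpha^{d-1}$ lie in $\rad_{\mathcal{F}}$, with the final vertical component $f^d$ an isomorphism and $f^{d+1}=1_{A^{d+1}}$. The plan is to prove that $f^0,\dots,f^{d-1}$ are all isomorphisms. The key leverage is that membership in $\rad_{\mathcal{F}}$, combined with right minimality (Lemma \ref{lemma_rad_rmin}), forces each $f^i$ to be invertible once we know the adjacent squares commute. My strategy is to move from right to left, using the $d$-kernel property to produce an \emph{inverse-candidate} morphism of the sequence and then to show the composite is an isomorphism on each vertex.

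\textbf{Step 1: build a morphism in the reverse direction.}
First I would use that $f^d$ is an isomorphism to set $g^d := (f^d)^{-1}$ and $g^{d+1}:=1_{A^{d+1}}$; these commute with $\alpha^d$ since $\alpha^d g^d = \alpha^d (f^d)^{-1} = (f^{d+1}\alpha^d)(f^d)^{-1} = g^{d+1}\alpha^d f^d (f^d)^{-1} = g^{d+1}\alpha^d$. Then I would invoke Lemma \ref{lemma_complete_to_morph} (with $i=j=d$) to extend $g^d,g^{d+1}$ to a full morphism of $d$-exact sequences $g:\delta\to\delta$ with components $g^0,\dots,g^{d+1}$.

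\textbf{Step 2: the composite is homotopic to the identity on the right.}
Consider $h:=1_\delta - gf:\delta\to\delta$. On the last two vertices, $h^{d+1}=0$ and $h^d = 1_{A^d} - g^d f^d = 1_{A^d}-(f^d)^{-1}f^d = 0$. Since $h^{d+1}=0$, condition (a) of Lemma \ref{lemma_null-homotopic} holds trivially (take $s^{d+1}=0$), so $h$ is null-homotopic. The proof of that lemma constructs homotopy maps $s^i:A^i\to A^{i-1}$ satisfying $h^i = \alpha^{i-1}s^i + s^{i+1}\alpha^i$ for $i=1,\dots,d$, with $s^{d+1}=0$.

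\textbf{Step 3: propagate invertibility leftward using $\rad_{\mathcal{F}}$.}
Now I would run a downward induction on $i$ from $d$ to $0$, showing each $f^i$ is an isomorphism. For $i=d$ this is the hypothesis. Assume $f^{i+1},\dots,f^d$ are isomorphisms; I want $f^i$. From the homotopy relation at index $i+1$, namely $h^{i+1} = 1_{A^{i+1}} - g^{i+1}f^{i+1} = \alpha^{i}s^{i+1} + s^{i+2}\alpha^{i+1}$, I would rearrange and use that $\alpha^i\in\rad_{\mathcal{F}}$. The cleanest route is to observe that $g^i f^i = 1_{A^i} - h^i = 1_{A^i} - \alpha^{i-1}s^i - s^{i+1}\alpha^i$; since $\alpha^i\in\rad_{\mathcal{F}}$ the term $s^{i+1}\alpha^i$ lies in $\rad_{\mathcal{F}}$, and likewise $\alpha^{i-1}s^i\in\rad_{\mathcal{F}}$ when $i\geq 1$, so $g^i f^i$ differs from $1_{A^i}$ by a radical endomorphism, hence is invertible in $\End_{\mathcal{F}}(A^i)$. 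A symmetric computation, comparing $f^i g^i$ against the identity via the analogous homotopy for $fg$, shows $f^i g^i$ is invertible as well; together these force $f^i$ to be an isomorphism with inverse $g^i$.

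\textbf{Main obstacle.}
The delicate point is the endpoint $i=0$, where there is no $\alpha^{-1}$ and the term $\alpha^{i-1}s^i$ is absent; here I would instead use that $f^0$ fits into the commuting left square $\alpha^0 f^0 = f^1\alpha^0$ with $f^1$ already known invertible and $\alpha^0\in\rad_{\mathcal{F}}$, together with the corresponding relation for $g^0$, to conclude $g^0f^0 = 1_{A^0}-s^1\alpha^0$ is invertible and similarly on the other side. I expect the genuine bookkeeping difficulty to be confirming that each radical factor really is of the form (something)$\cdot\alpha^i$ or $\alpha^{i-1}\cdot$(something) so that the two-sided ideal property of $\rad_{\mathcal{F}}$ applies, and ensuring the reverse composite $fg$ is handled symmetrically so that invertibility of $f^i$ (not merely of $g^if^i$) is obtained at every vertex.
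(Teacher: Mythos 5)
Your proof is correct, but it takes a genuinely different route through the second half of the argument than the paper does. Both proofs begin identically: use the invertibility of $f^d$ and Lemma \ref{lemma_complete_to_morph} to build a reverse morphism $g:\delta\rightarrow\delta$ with $g^d=(f^d)^{-1}$ and $g^{d+1}=1_{A^{d+1}}$. (Your parenthetical ``with $i=j=d$'' is not literally within the lemma's stated hypothesis $0\leq i<j\leq d$; what is really being used is the pair $g^d$, $g^{d+1}$ with the commuting last square, completed downward via the $d$-kernel property --- but the paper applies the lemma in exactly the same boundary situation, so this is a shared imprecision, not a gap.) From there the paper argues via right minimality: by Lemma \ref{lemma_rad_rmin} the maps $\alpha^1,\dots,\alpha^d$ are right minimal, so $\alpha^{d-1}=\alpha^{d-1}g^{d-1}f^{d-1}$ forces $g^{d-1}f^{d-1}$ (and symmetrically $f^{d-1}g^{d-1}$) to be invertible; it then runs an induction that introduces correction morphisms such as $h^{d-1}=(g^{d-1}f^{d-1})^{-1}$ at each stage, and finally handles $f^0$ by a separate argument using that $\alpha^0$ is a monomorphism. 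You instead apply Lemma \ref{lemma_null-homotopic} to the chain maps $1_\delta-gf$ and $1_\delta-fg$, whose components vanish in degrees $d$ and $d+1$: taking $s^{d+1}=0$, condition (a) of that lemma holds, so both are null-homotopic, and the homotopy relations $h^i=\alpha^{i-1}s^i+s^{i+1}\alpha^i$ (with $h^0=s^1\alpha^0$) exhibit every component as lying in the two-sided ideal $\rad_{\mathcal{F}}$, since $\alpha^0,\dots,\alpha^{d-1}\in\rad_{\mathcal{F}}$. Hence each $g^if^i$ and $f^ig^i$ is of the form $1-r$ with $r\in\rad_{\mathcal{F}}(A^i,A^i)$, so invertible directly from the definition of the radical, and each $f^i$ is an isomorphism. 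Your approach buys uniformity: one homotopy argument covers every index at once, including $i=0$, with no inductive bookkeeping and no appeal to $\alpha^0$ being a monomorphism; it also isolates exactly where the radical hypothesis enters (absorbing the homotopy terms). The paper's route stays closer to the classical minimality-based argument for $d=1$. One cosmetic point: your Step 3 is framed as a downward induction, but once the homotopies exist no induction is needed --- the radical membership of $h^i$ is immediate for all $i$ simultaneously.
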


\begin{proof}
First note that, by Lemma \ref{lemma_rad_rmin}, since $\alpha^0,\dots,\, \alpha^{d-1}$ are in $\rad_\mathcal{F}$ then $\alpha^1$, $\dots$, $\alpha^d$ are right minimal. Since $f^d$ is invertible,  $\alpha^d f^d=\alpha^d$ implies that $\alpha^d=\alpha^d (f^d)^{-1}$. Then, using Lemma \ref{lemma_complete_to_morph}, we can construct a commutative diagram of the form:
\begin{align*}
\xymatrix@C=2em{\delta:\ar[d]^f &0\ar[r]&A^0\ar[r]^{\alpha^0}\ar[d]^{f^0}& A^1\ar[r]^{\alpha^1}\ar[d]^{f^1}& A^2\ar[r]\ar[d]^{f^2}&\cdots\ar[r]& A^{d-1}\ar[r]^-{\alpha^{d-1}}\ar[d]^{f^{d-1}}& A^d\ar[r]^{\alpha^d}\ar[d]^{f^d}& A^{d+1}\ar[r]\ar@{=}[d]&0\\
\delta:\ar[d]^g &0\ar[r]&A^0\ar[r]^{\alpha^0}\ar[d]^{g^0}& A^1\ar[r]^{\alpha^1}\ar[d]^{g^1}& A^2\ar[r]\ar[d]^{g^2}&\cdots\ar[r]& A^{d-1}\ar[r]^-{\alpha^{d-1}}\ar[d]^{g^{d-1}}& A^d\ar[r]^{\alpha^d}\ar[d]^{(f^d)^{-1}}& A^{d+1}\ar[r]\ar@{=}[d]&0\\
\delta: &0\ar[r]&A^0\ar[r]_{\alpha^0}& A^1\ar[r]_{\alpha^1}& A^2\ar[r]&\cdots\ar[r]& A^{d-1}\ar[r]_-{\alpha^{d-1}}& A^d\ar[r]_{\alpha^d}& A^{d+1}\ar[r]&0.
}
\end{align*}
Hence $\alpha^{d-1}=\alpha^{d-1}g^{d-1}f^{d-1}$ and as $\alpha^{d-1}$ is right minimal, it follows that $g^{d-1} f^{d-1}$ is an isomorphism. Similarly, looking at $fg$ we conclude that $f^{d-1}g^{d-1}$ is an isomorphism and hence $f^{d-1}$ is an isomorphism.
Letting $h^{d-1}:=(g^{d-1}f^{d-1})^{-1}$, we can construct a commutative diagram of the form:
\begin{align*}
\xymatrix@C=2em{\delta:\ar[d]^{gf} &0\ar[r]&A^0\ar[r]^{\alpha^0}\ar[d]^{g^0f^0}& A^1\ar[r]^{\alpha^1}\ar[d]^{g^1f^1}&\cdots\ar[r]& A^{d-2}\ar[r]^{\alpha^{d-2}}\ar[d]^{g^{d-2}f^{d-2}} &A^{d-1}\ar[r]^-{\alpha^{d-1}}\ar[d]^{g^{d-1}f^{d-1}}& A^d\ar[r]^{\alpha^d}\ar@{=}[d]& A^{d+1}\ar[r]\ar@{=}[d]&0\\
\delta:\ar[d]^h &0\ar[r]&A^0\ar[r]^{\alpha^0}\ar@{-->}[d]^{h^0}& A^1\ar[r]^{\alpha^1}\ar@{-->}[d]^{h^1}&\cdots\ar[r]& A^{d-2}\ar[r]^{\alpha^{d-2}}\ar@{-->}[d]^{h^{d-2}} &A^{d-1}\ar[r]^-{\alpha^{d-1}}\ar[d]^{h^{d-1}}& A^d\ar[r]^{\alpha^d}\ar@{=}[d]& A^{d+1}\ar[r]\ar@{=}[d]&0\\
\delta: &0\ar[r]&A^0\ar[r]_{\alpha^0}& A^1\ar[r]_{\alpha^1}&\cdots\ar[r]&A^{d-2}\ar[r]_{\alpha^{d-2}}& A^{d-1}\ar[r]_-{\alpha^{d-1}}& A^d\ar[r]_{\alpha^d}& A^{d+1}\ar[r]&0.
}
\end{align*}
Then
\begin{align*}
\alpha^{d-2}=h^{d-1}g^{d-1} f^{d-1}\alpha^{d-2}=\alpha^{d-2}h^{d-2}g^{d-2}f^{d-2},
\end{align*}
and, as $\alpha^{d-2}$ is right minimal, we have that $h^{d-2}g^{d-2}f^{d-2}$ is an isomorphism. Similarly, $g^{d-2}f^{d-2}h^{d-2}$ is an isomorphism. Then $g^{d-2}f^{d-2}$ is an isomorphism. Since also $f^{d-1}g^{d-1}$ is an isomorphism, by a similar argument we have that $f^{d-2}g^{d-2}$ is an isomorphism. Hence $f^{d-2}$ is an isomorphism. Proceeding by induction, we conclude that $f^1,\dots,\,f^{d-2}$ are all isomorphisms. Then also $f^0$ is forced to be an isomorphism, because $\alpha^0$ is a monomorphism.
\end{proof}

\begin{lemma}\label{lemma_eq_classes_d-pushout}
Consider a $d$-exact sequence in $\mathcal{F}$ of the form 
\begin{align*}
\xymatrix{\delta: 0\ar[r]&A^0\ar[r]^{\alpha^0}& A^1\ar[r]^{\alpha^1}& A^2\ar[r]&\cdots\ar[r]& A^{d-1}\ar[r]^-{\alpha^{d-1}}& A^d\ar[r]^{\alpha^d}& A^{d+1}\ar[r]&0
}
\end{align*}
and a morphism $f^0:A^0\rightarrow B^0$ in $\mathcal{F}$. Let $f:\delta\rightarrow f^0\cdot \delta$ be as described in Remark \ref{remark_Ext_pull_push}(b). Suppose there is a morphism of $d$-exact sequences of the form:
\begin{align*}
\xymatrix@C=2em{\delta\ar[d]^-{g}:&0\ar[r] &A^0\ar[r]^{\alpha^0}\ar[d]_-{g^0=f^0}& A^1\ar[r]^{\alpha^1}\ar[d]^-{g^1}& A^2\ar[r]\ar[d]^-{g^2}&\cdots\ar[r]& A^{d-1}\ar[r]^-{\alpha^{d-1}}\ar[d]^-{g^{d-1}}& A^d\ar[d]^-{g^d}\ar[r]^{\alpha^d}& A^{d+1}\ar@{=}[d]\ar[r]&0\\
\epsilon': & 0\ar[r]&B^0\ar[r]_{\gamma^0}& C^1\ar[r]_{\gamma^1}& C^2\ar[r]&\cdots\ar[r]&C^{d-1}\ar[r]_-{\gamma^{d-1}}& C^d\ar[r]_{\gamma^d}& A^{d+1}\ar[r]&0.
}
\end{align*}
Then $[f^0\cdot\delta]=[\epsilon']$ in $\Ext_{\Phi}^d(A^{d+1}, B^0)$.
\end{lemma}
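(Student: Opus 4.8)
The plan is to produce a single morphism of $d$-exact sequences $h\colon f^0\cdot\delta\to\epsilon'$ which restricts to $\mathrm{id}_{B^0}$ on the left-hand term and to $\mathrm{id}_{A^{d+1}}$ on the right-hand term. Such an $h$ is exactly a witness of the relation $f^0\cdot\delta\rightsquigarrow\epsilon'$, so by Remark \ref{remark_Yext_Ext} it forces $[f^0\cdot\delta]=[\epsilon']$ in $\Ext_\Phi^d(A^{d+1},B^0)$. Hence the whole content is the construction of such an $h$, and once it is built the conclusion is immediate.

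First I would start the morphism at the identity on $B^0$. Writing $\beta^0,\dots,\beta^d$ for the maps of $f^0\cdot\delta$ and $\gamma^0,\dots,\gamma^d$ for those of $\epsilon'$, I need some $h^1\colon B^1\to C^1$ with $h^1\beta^0=\gamma^0$, so that the degree-$0$ square $\gamma^0\,\mathrm{id}_{B^0}=h^1\beta^0$ commutes. Here I use that $f\colon\delta\to f^0\cdot\delta$ arises from a $d$-pushout (Remark \ref{remark_Ext_pull_push}(b)), so by Definition \ref{defn_dpush} the mapping cone of the truncated chain map $(f^0,\dots,f^d)$ has its tail a $d$-cokernel of the cone differential $\gamma^{-1}\colon A^0\to A^1\oplus B^0$, whose components are, up to sign, $\alpha^0$ and $f^0$. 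Testing the resulting exact $\Hom$-sequence against $C^1$, the pair $(g^1,\gamma^0)\in(A^1,C^1)\oplus(B^0,C^1)$ is annihilated by precomposition with $\gamma^{-1}$, precisely because $g$ is a chain map with $g^0=f^0$ and hence $\gamma^0 f^0=g^1\alpha^0$. By the $d$-cokernel exactness $(g^1,\gamma^0)$ therefore lies in the image of precomposition with the next cone differential, and reading off its $B^0$-component produces $h^1$ with $h^1\beta^0=\gamma^0$. With this commuting square in hand, Lemma \ref{lemma_complete_to_morph} (applied with $i=0$, $j=1$) extends $\mathrm{id}_{B^0},h^1$ to a full morphism $h\colon f^0\cdot\delta\to\epsilon'$, whose induced right-hand map I denote $\lambda:=h^{d+1}\colon A^{d+1}\to A^{d+1}$.

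It then remains to correct $\lambda$ to the identity. Consider $hf-g\colon\delta\to\epsilon'$: its degree-$0$ component is $h^0f^0-g^0=f^0-f^0=0$, so by Lemma \ref{lemma_null-homotopic} (condition (b) with $s^1=0$) it is null-homotopic, and hence there is $s^{d+1}\colon A^{d+1}\to C^d$ with $\gamma^d s^{d+1}=(hf-g)^{d+1}=\lambda-\mathrm{id}_{A^{d+1}}$, using $f^{d+1}=\mathrm{id}$ and $g^{d+1}=\mathrm{id}$. I then replace $h^{d+1}$ by $\mathrm{id}_{A^{d+1}}$ and $h^d$ by $h^d-s^{d+1}\beta^d$, leaving the remaining components unchanged. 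A direct check shows this is still a morphism of $d$-exact sequences: the top square becomes $\gamma^d(h^d-s^{d+1}\beta^d)=\lambda\beta^d-(\lambda-\mathrm{id})\beta^d=\beta^d$, while the square immediately below is unaffected since $\beta^d\beta^{d-1}=0$. The modified $h$ is now the identity on both end terms, yielding $f^0\cdot\delta\rightsquigarrow\epsilon'$ and completing the argument.

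The main obstacle is the first step: getting the morphism $f^0\cdot\delta\to\epsilon'$ off the ground at $\mathrm{id}_{B^0}$. This is where both the $d$-pushout hypothesis and the existence of $g$ are genuinely used, since without the chain map $g$ there is no reason for $\gamma^0$ to factor through the monomorphism $\beta^0$; it is exactly the $d$-cokernel exactness of Definition \ref{defn_dpush} that converts the chain-map identity $\gamma^0 f^0=g^1\alpha^0$ into the required factorization. Once the morphism is started, the completion via Lemma \ref{lemma_complete_to_morph} and the end-correction via Lemma \ref{lemma_null-homotopic} are routine.
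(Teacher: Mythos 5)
Your proof is correct, but it takes a genuinely different route from the paper's. The paper's proof of this lemma is essentially a one-line citation: it notes that $f^0\cdot\delta$ arises by extending a $d$-pushout diagram and then invokes \cite[Proposition 4.8]{JG}, which packages exactly the comparison statement you establish by hand. Your argument is instead self-contained in the paper's own toolkit: you unwind Definition \ref{defn_dpush}, testing the $d$-cokernel exactness of the mapping cone against $C^1$ to convert the chain-map identity $\gamma^0 f^0=g^1\alpha^0$ into a factorization $h^1\beta^0=\gamma^0$ (this is the weak universal property of the $d$-pushout, correctly extracted, sign conventions notwithstanding); you then complete $(1_{B^0},h^1)$ to a morphism $h\colon f^0\cdot\delta\to\epsilon'$ by Lemma \ref{lemma_complete_to_morph} with $i=0$, $j=1$; and finally you repair the right-hand end term: since $(hf-g)^0=0$, Lemma \ref{lemma_null-homotopic} gives $s^{d+1}$ with $\gamma^d s^{d+1}=h^{d+1}-1_{A^{d+1}}$, and replacing $(h^d,h^{d+1})$ by $(h^d-s^{d+1}\beta^d,\,1_{A^{d+1}})$ still yields a chain map (the adjacent square commutes because $\beta^d\beta^{d-1}=0$), now the identity on both ends. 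Since $d$-exact sequences in $\mathcal{F}$ are exact in $\mmod\Phi$, this is a legitimate Yoneda witness of $f^0\cdot\delta\rightsquigarrow\epsilon'$, so $[f^0\cdot\delta]=[\epsilon']$ by Remark \ref{remark_Yext_Ext}; I verified the details (the applicability of Lemma \ref{lemma_complete_to_morph}, the homotopy correction, the $d=1$ edge case) and they all go through. The trade-off: the paper's citation is shorter and delegates the homotopy-uniqueness content to Jasso, while your version makes the lemma independent of \cite[Proposition 4.8]{JG}, deriving it from Lemmas \ref{lemma_null-homotopic} and \ref{lemma_complete_to_morph} and the definition of $d$-pushout alone.
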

\begin{proof}
Note that $f^0\cdot\delta$ as described in Remark \ref{remark_Ext_pull_push}(b) is obtained by extending a $d$-pushout diagram. The result then follows using \cite[Proposition 4.8]{JG}.
\end{proof}

\begin{defn}[{\cite[Appendix A]{IO}}]
When $d\geq 2$, we say that a $d$-exact sequence in $\mathcal{F}$ of the form:
\begin{align*}
\xymatrix{\delta: 0\ar[r]&A^0\ar[r]^{\alpha^0}& A^1\ar[r]^{\alpha^1}& A^2\ar[r]&\cdots\ar[r]& A^{d-1}\ar[r]^-{\alpha^{d-1}}& A^d\ar[r]^{\alpha^d}& A^{d+1}\ar[r]&0
}
\end{align*}
is \textit{almost minimal} if $\alpha^1,\dots,\,\alpha^{d-1}$ are in $\rad_\mathcal{F}$.
\end{defn}

\begin{remark}\label{remark_almost_minimal}
Let $\mathcal{X}\subseteq\mathcal{F}$ be an additive subcategory closed under $d$-extensions.
By \cite[appendix A]{IO}, in every Yoneda equivalence class, there is a unique almost minimal sequence up to isomorphism. Consider a $d$-exact sequence in $\mathcal{F}$ of the form:
\begin{align*}
\xymatrix{\delta: 0\ar[r]&X^0\ar[r]& A^1\ar[r]& A^2\ar[r]&\cdots\ar[r]& A^{d-1}\ar[r]& A^d\ar[r]& X^{d+1}\ar[r]&0,
}
\end{align*}
with $X^0,\,X^{d+1}$ in $\mathcal{X}$. The almost minimal sequence in the equivalence class $[\delta]$ has all the terms in $\mathcal{X}$. In fact, since $\mathcal{X}$ is closed under $d$-extensions, we know there is a $d$-exact sequence with all terms in $\mathcal{X}$ in $[\delta]$, and dropping extra direct summands of the form $X\xrightarrow{\sim} X$ in the middle terms of this, we obtain the unique almost minimal sequence in $[\delta]$, say
\begin{align*}
    \xymatrix@C=2.2em{
    \delta': 0\ar[r]&X^0\ar[r]& X^1\ar[r]& X^2\ar[r]&\cdots\ar[r]& X^{d-1}\ar[r]& X^d\ar[r]& X^{d+1}\ar[r]&0,
    }
\end{align*}
with all terms in $\mathcal{X}$.
Note that dropping extra direct summands of the form $A\xrightarrow{\sim} A$ in the middle terms of $\delta$, we also obtain an almost minimal sequence
\begin{align*}
\xymatrix{\epsilon: 0\ar[r]&X^0\ar[r]& \overline{A^1}\ar[r]& \overline{A^2}\ar[r]&\cdots\ar[r]& \overline{A^{d-1}}\ar[r]& \overline{A^d}\ar[r]& X^{d+1}\ar[r]&0.
}
\end{align*}
By uniqueness, $\delta'\cong\epsilon$ and so $\epsilon$ has all terms in $\mathcal{X}$. Note that $[\delta]=[\epsilon]$ and, since $\overline{A^i}$ is a direct summand of $A^i$ for any $i=1,\dots,\, d$, there are morphisms of $d$-exact sequences $\epsilon\rightarrow\delta$ and $\delta\rightarrow\epsilon$.
\end{remark}

\section{$d$-Auslander-Reiten sequences in $\mathcal{X}$}\label{section_4}
\begin{setup}
Let $d$, $\Phi$ and $\mathcal{F}$ be as in Setup \ref{setup} and let $\mathcal{X}\subseteq\mathcal{F}$ be an additive subcategory closed under $d$-extensions.
\end{setup}
We introduce $d$-Auslander-Reiten sequences in the subcategory $\mathcal{X}$ and give equivalent definitions. Note that the case $\mathcal{X}=\mathcal{F}$ will give the corresponding results in the ambient category $\mathcal{F}$.
\begin{defn}
A morphism $\xi^d:X^d\rightarrow X^{d+1}$ in $\mathcal{X}$ is \textit{right almost split in $\mathcal{X}$} if it is not a split epimorphism and for every $Y$ in $\mathcal{X}$, every morphism $\eta:Y\rightarrow X^{d+1}$ which is not a split epimorphism factors through $\xi^d$, \textbf{i.e.} there exists a morphism $Y\rightarrow X^d$ such that the following diagram commutes:
    \begin{align*}
        \xymatrix  {
        X^d\ar[rr]^{\xi^d}&& X^{d+1}.\\
        & Y\ar[ru]_\eta \ar@{-->}[lu]^{\exists} &
        }
    \end{align*}
Dually, one defines \textit{left almost split morphisms in $\mathcal{X}$}.
\end{defn}

\begin{defn}\label{defn_dAR_seq}
We say that a $d$-exact sequence in $\mathcal{F}$ with all terms from $\mathcal{X}$ of the form
    \begin{align*}
    \xymatrix{\epsilon: 0\ar[r]&X^0\ar[r]^-{\xi^0}& X^1\ar[r]^{\xi^1}&\cdots\ar[r]& X^{d-1}\ar[r]^-{\xi^{d-1}} & X^d\ar[r]^{\xi^d}& X^{d+1}\ar[r]&0,
    }
    \end{align*}
is a \textit{$d$-Auslander-Reiten sequence in $\mathcal{X}$} if the morphism $\xi^0$ is left almost split in $\mathcal{X}$, the morphism $\xi^d$ is right almost split in $\mathcal{X}$ and, when $d\geq 2$, also $\xi^1,\dots,\,\xi^{d-1}\in\rad_\mathcal{X}$.
\end{defn}

The following is a well known result, see \cite[Lemma V.1.7]{ARS}. Note that for a module in $\mmod \Phi$, having local endomorphism ring is equivalent to being indecomposable.
\begin{lemma}\label{lemma_las_rad}
Let $\xi^0:X^0\rightarrow X^1$ be left almost split in $\mathcal{X}$. Then $\End_\Phi(X^0)$ is local and $\xi^0$ is in $\rad_\mathcal{X}$.
\end{lemma}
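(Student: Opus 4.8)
The plan is to reduce both assertions to one elementary observation together with the defining factorisation property. Write $R=\End_\Phi(X^0)$ and recall that, dually to the right almost split case, $\xi^0$ being left almost split in $\mathcal{X}$ means that $\xi^0$ is not a split monomorphism and that every $\eta:X^0\rightarrow Y$ with $Y\in\mathcal{X}$ which is not a split monomorphism factors as $\eta=\rho\xi^0$ for some $\rho:X^1\rightarrow Y$. The elementary observation is that any endomorphism $\phi:X^0\rightarrow X^0$ which is a split monomorphism is automatically an isomorphism: a split monomorphism is injective, and an injective endomorphism of the finite dimensional module $X^0$ is bijective. Equivalently, every non-isomorphism in $R$ fails to be a split monomorphism, and hence factors through $\xi^0$ by the left almost split property applied with $Y=X^0$.

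First I would prove that $R$ is local. Given a non-isomorphism $\phi\in R$, the observation gives $\psi:X^1\rightarrow X^0$ with $\phi=\psi\xi^0$, and I claim that $1_{X^0}-\theta\phi$ is invertible for every $\theta\in R$. If it were not, then $1_{X^0}-\theta\phi$ would be a non-isomorphism, hence not a split monomorphism, hence of the form $1_{X^0}-\theta\phi=\rho\xi^0$ for some $\rho:X^1\rightarrow X^0$; but then $1_{X^0}=\theta\psi\xi^0+\rho\xi^0=(\theta\psi+\rho)\xi^0$ exhibits $\xi^0$ as a split monomorphism, contradicting that $\xi^0$ is left almost split. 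Thus every non-isomorphism of $R$ lies in the Jacobson radical $J(R)$; since units never lie in $J(R)$, the non-units are exactly $J(R)$, a two-sided ideal, so $R$ is local.

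Finally I would show $\xi^0\in\rad_\mathcal{X}$ by the same contradiction argument applied directly. Fixing $\eta:X^1\rightarrow X^0$, I must check that $1_{X^0}-\eta\xi^0$ is invertible. Were it not, it would not be a split monomorphism, so $1_{X^0}-\eta\xi^0=\rho\xi^0$ for some $\rho:X^1\rightarrow X^0$, giving $1_{X^0}=(\eta+\rho)\xi^0$ and once more contradicting that $\xi^0$ is not a split monomorphism. Hence $1_{X^0}-\eta\xi^0$ is invertible for all such $\eta$, which is precisely the statement that $\xi^0\in\rad_\mathcal{X}(X^0,X^1)$.

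There is no deep obstacle here; the one point requiring care is the interplay between the notions of being invertible, being a split monomorphism, and being an isomorphism for endomorphisms of $X^0$, which is resolved by finite dimensionality, together with the bookkeeping identification $\rad_\mathcal{X}(X^0,X^0)=J(\End_\Phi(X^0))$ so that the category-theoretic radical agrees with the ring-theoretic one used in the locality step.
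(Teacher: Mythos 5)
Your proof is correct. Note, however, that the paper does not actually prove this lemma: it is quoted as a well-known classical fact, with a citation to Auslander--Reiten--Smal{\o} (Lemma V.1.7 of \cite{ARS}), together with the remark that for modules in $\mmod\Phi$ locality of the endomorphism ring is equivalent to indecomposability. So your argument should be compared with that classical one, and the routes differ slightly. The textbook proof establishes that $X^0$ is indecomposable by a direct-summand argument: if $X^0=A_1\oplus A_2$ nontrivially, the projections $X^0\rightarrow A_i$ are not split monomorphisms, hence factor through $\xi^0$, and assembling these factorizations exhibits $\xi^0$ as a split monomorphism, a contradiction; locality of $\End_\Phi(X^0)$ then follows from the standard equivalence (Fitting's lemma) for finitely generated modules over an Artin algebra, and $\xi^0\in\rad_\mathcal{X}$ follows because a non-split-monomorphism out of a module with local endomorphism ring lies in the radical. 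You instead prove locality directly, via the Jacobson-radical characterization: your key observation that a split monomorphic endomorphism of a finite dimensional module is an isomorphism lets you apply the left almost split factorization property to arbitrary non-isomorphisms $X^0\rightarrow X^0$, and the identity $1_{X^0}=(\theta\psi+\rho)\xi^0$ cleanly contradicts $\xi^0$ not being a split monomorphism; the same contradiction handles $\xi^0\in\rad_\mathcal{X}$ without any detour through indecomposability. Both arguments use the same two ingredients (the factorization property applied with target $X^0$, and the non-split-mono hypothesis), but yours is self-contained and avoids invoking Fitting's lemma, at the modest cost of the ring-theoretic bookkeeping that non-units coinciding with $J(R)$ forces $R$ local; the one point you leave implicit is that $X^0\neq 0$ (so $R\neq 0$), which is immediate since otherwise $\xi^0$ would trivially split.
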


\begin{remark}
Note that if $\epsilon$ is a $d$-Auslander-Reiten sequence in $\mathcal{X}$, Lemma \ref{lemma_las_rad} and its dual imply that $\End_\Phi(X^0)$ and $\End_\Phi(X^{d+1})$ are local and $\xi^0,\,\xi^d$ are in $\rad_\mathcal{X}$.
\end{remark}

\begin{lemma}\label{lemma_dAR_equiv}
Consider a $d$-exact sequence in $\mathcal{F}$ with all terms from $\mathcal{X}$ of the form:
    \begin{align*}
    \xymatrix{\epsilon: 0\ar[r]&X^0\ar[r]^-{\xi^0}& X^1\ar[r]^{\xi^1}&\cdots\ar[r]& X^{d-1}\ar[r]^-{\xi^{d-1}} & X^d\ar[r]^{\xi^d}& X^{d+1}\ar[r]&0.
    }
    \end{align*}
The following are equivalent:
\begin{enumerate}[label=(\alph*)]
    \item $\epsilon$ is a $d$-Auslander-Reiten sequence in $\mathcal{X}$,
    \item $\xi^0,\,\xi^1,\dots,\,\xi^{d-1}$ are in $\rad_\mathcal{X}$ and $\xi^d$ is right almost split in $\mathcal{X}$,
    \item $\xi^1,\dots,\,\xi^{d-1},\,\xi^d$ are in $\rad_\mathcal{X}$ and $\xi^0$ is left almost split in $\mathcal{X}$.
\end{enumerate}
\end{lemma}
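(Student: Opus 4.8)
The plan is to run the cycle by dispatching the two implications out of (a) immediately and deriving the two converses from a single argument plus $k$-duality. Throughout I will use that, since $\mathcal{X}$ is a full subcategory of $\mathcal{F}$ closed under summands and isomorphisms, invertibility of endomorphisms of objects of $\mathcal{X}$ is the same in $\mathcal{X}$ and in $\mathcal{F}$, so $\rad_\mathcal{X}(A,B)=\rad_\mathcal{F}(A,B)$ for $A,B\in\mathcal{X}$; this lets me feed the hypotheses ``$\xi^i\in\rad_\mathcal{X}$'' into Lemmas \ref{lemma_rad_rmin} and \ref{lemma_iso_d-exact_seq}, which are phrased via $\rad_\mathcal{F}$. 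The implications (a)$\Rightarrow$(b) and (a)$\Rightarrow$(c) are immediate: if $\xi^0$ is left almost split then $\xi^0\in\rad_\mathcal{X}$ by Lemma \ref{lemma_las_rad}, and dually if $\xi^d$ is right almost split then $\xi^d\in\rad_\mathcal{X}$, while the remaining radical conditions are built into Definition \ref{defn_dAR_seq}. It then suffices to prove (b)$\Rightarrow$(a): applying that implication under the $k$-duality $D=\Hom_k(-,k)$, which carries $\mathcal{F}$ to a $d$-cluster tilting subcategory of $\mmod\Phi^{op}$, reverses $\epsilon$, interchanges left and right almost split morphisms, and preserves $\rad$, yields (c)$\Rightarrow$(a).

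For (b)$\Rightarrow$(a) I assume $\xi^0,\dots,\xi^{d-1}\in\rad_\mathcal{X}$ and $\xi^d$ right almost split, and must show $\xi^0$ is left almost split (the conditions $\xi^1,\dots,\xi^{d-1}\in\rad_\mathcal{X}$ needed for (a) are already present). First, $\xi^0$ is not a split monomorphism: $\xi^d$ is not a split epimorphism, so Corollary \ref{coro_split} applies. Now I fix $Y\in\mathcal{X}$ and $\phi:X^0\to Y$ that is not a split monomorphism and aim to factor $\phi$ through $\xi^0$. Forming the $d$-pushout $\phi\cdot\epsilon$ of $\epsilon$ along $\phi$ (Remark \ref{remark_Ext_pull_push}(b), Lemma \ref{lemma_d-pushout_exists}) produces a morphism $\epsilon\to\phi\cdot\epsilon$ equal to $\phi$ on the left term and to $1_{X^{d+1}}$ on the right. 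Applying Lemma \ref{lemma_null-homotopic} to this morphism, the desired factorization of $\phi$ through $\xi^0$ (condition (b) there) is equivalent to the last map of $\phi\cdot\epsilon$ being a split epimorphism (condition (a) there), i.e. to $[\phi\cdot\epsilon]=0$ by Corollary \ref{coro_split} and Remark \ref{remark_Ext_pull_push}(a). So I will prove the contrapositive: if $\phi$ does not factor through $\xi^0$, then $\phi$ is a split monomorphism.

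Suppose $[\phi\cdot\epsilon]\neq 0$. Using Remark \ref{remark_almost_minimal} I replace $\phi\cdot\epsilon$ by a Yoneda-equivalent $d$-exact sequence $\epsilon':0\to Y\xrightarrow{\beta^0}B^1\to\cdots\to B^d\xrightarrow{\beta^d}X^{d+1}\to 0$ with all terms in $\mathcal{X}$, and compose to get a morphism $\psi:\epsilon\to\epsilon'$ with $\psi^0=\phi$ and $\psi^{d+1}=1_{X^{d+1}}$. Since $[\epsilon']\neq 0$, Corollary \ref{coro_split} shows $\beta^d$ is not a split epimorphism, so $\xi^d$ right almost split gives $t:B^d\to X^d$ with $\beta^d=\xi^d t$. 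The last square of $\psi$ reads $\beta^d\psi^d=\xi^d$, hence $\xi^d(t\psi^d)=\xi^d$; as $\xi^{d-1}\in\rad_\mathcal{F}$ forces $\xi^d$ to be right minimal (Lemma \ref{lemma_rad_rmin}), $t\psi^d$ is an isomorphism. Completing $t$ at position $d$ and $1_{X^{d+1}}$ at position $d+1$ to a morphism $\chi:\epsilon'\to\epsilon$ via Lemma \ref{lemma_complete_to_morph}, the endomorphism $\chi\psi$ of $\epsilon$ has $d$-th component $t\psi^d$ an isomorphism and $(d{+}1)$-th component $1_{X^{d+1}}$. Because $\xi^0,\dots,\xi^{d-1}\in\rad_\mathcal{F}$, Lemma \ref{lemma_iso_d-exact_seq} then forces $(\chi\psi)^0,\dots,(\chi\psi)^{d-1}$ all to be isomorphisms; in particular $\chi^0\phi=(\chi\psi)^0$ is an isomorphism, so $\phi$ is a split monomorphism, as required.

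The main obstacle is exactly this core step, transporting the factorization produced at the right-hand end back to the left-hand end; this is why the argument is organized around the null-homotopy criterion (Lemma \ref{lemma_null-homotopic}) and the rigidity statement (Lemma \ref{lemma_iso_d-exact_seq}). The decisive input is the right-minimality of $\xi^d$ extracted from the radical hypothesis by Lemma \ref{lemma_rad_rmin}, which upgrades the mere factorization $\beta^d=\xi^d t$ to the isomorphism $t\psi^d$ that makes Lemma \ref{lemma_iso_d-exact_seq} applicable. A secondary technical point, handled by Remark \ref{remark_almost_minimal}, is that the $d$-pushout a priori lives in $\mathcal{F}$, so one must pass to a Yoneda-equivalent representative with terms in $\mathcal{X}$ before the right-almost-split property of $\xi^d$ can be applied to its final map.
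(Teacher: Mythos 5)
Your proof is correct and takes essentially the same route as the paper: push out $\epsilon$ along a morphism out of $X^0$, replace the pushout by a Yoneda-equivalent sequence with terms in $\mathcal{X}$ (Remark \ref{remark_almost_minimal}), factor its last map through the right almost split $\xi^d$, complete to a morphism back into $\epsilon$ (Lemma \ref{lemma_complete_to_morph}), and use right minimality of $\xi^d$ (Lemma \ref{lemma_rad_rmin}) together with Lemma \ref{lemma_iso_d-exact_seq} to conclude that the zeroth component is an isomorphism, hence a split monomorphism. The only differences are organizational: you prove (b)$\Rightarrow$(a) by contrapositive, invoking Lemma \ref{lemma_null-homotopic} to translate ``$\phi$ factors through $\xi^0$'' into splitting of the pushout, whereas the paper proves (b)$\Rightarrow$(c) by contradiction and extracts the factorization directly from a retraction of the pushout's first map; both then close the cycle of implications by duality.
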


\begin{proof}
By Lemma \ref{lemma_las_rad} and its dual, it is clear that (a) implies both (b) and (c). Suppose now that (b) holds. By the dual of Lemma \ref{lemma_las_rad}, it follows that $\xi^d\in\rad_{\mathcal{X}}$. Let $f^0:X^0\rightarrow Y^0$ be a morphism in $\mathcal{X}$ that is not a split monomorphism. By Lemma \ref{lemma_d-pushout_exists}, there is a morphism of $d$-exact sequences of the form:
 \begin{align*}
    \xymatrix{\epsilon\ar[d]^f:&0\ar[r] &X^0\ar[r]^{\xi^0}\ar[d]^-{f^0}& X^1\ar[r]^{\xi^1}\ar[d]^-{f^1}& \cdots\ar[r]&X^{d-1}\ar[r]^-{\xi^{d-1}}\ar[d]^-{f^{d-1}}&  X^d\ar[d]^-{f^d}\ar[r]^{\xi^d}& X^{d+1}\ar@{=}[d]\ar[r]&0\\
    \delta: & 0\ar[r]&Y^0\ar[r]_{\eta^0}& Y^1\ar[r]_{\eta^1}& \cdots\ar[r]&Y^{d-1}\ar[r]_-{\eta^{d-1}}& Y^d\ar[r]_{\eta^d}& X^{d+1}\ar[r]&0,
    }
\end{align*}
where we may assume $Y^1,\dots,\,Y^d$ are in $\mathcal{X}$ by Remark \ref{remark_almost_minimal}.
Suppose for a contradiction that $\eta^0$ is not a split monomorphism. Then $\eta^d$ is not a split epimorphism by Corollary \ref{coro_split} and, since $\xi^d$ is right almost split in $\mathcal{X}$, then there exists $g^d:Y^d\rightarrow X^d$ such that $\xi^d g^d=\eta^d$. By Lemma \ref{lemma_complete_to_morph}, there is a morphism of $d$-exact sequences of the form:
 \begin{align*}
    \xymatrix{
    \delta:\ar[d]^{g} & 0\ar[r]&Y^0\ar[r]^{\eta^0}\ar[d]^{g^0}& Y^1\ar[r]^{\eta^1}\ar[d]^{g^1}& \cdots\ar[r]&Y^{d-1}\ar[r]^-{\eta^{d-1}}\ar[d]^{g^{d-1}}& Y^d\ar[r]^{\eta^d}\ar[d]^{g^d}& X^{d+1}\ar[r]\ar@{=}[d]&0\\
    \epsilon:&0\ar[r] &X^0\ar[r]_{\xi^0}& X^1\ar[r]_{\xi^1}& \cdots\ar[r]&X^{d-1}\ar[r]_-{\xi^{d-1}}&  X^d\ar[r]_{\xi^d}& X^{d+1}\ar[r]&0.
    }
\end{align*}
Note that $\xi^dg^df^d=\xi^d$ and, since Lemma \ref{lemma_rad_rmin} implies that $\xi^d$ is right minimal, it follows that $g^df^d$ is an isomorphism. Hence, Lemma \ref{lemma_iso_d-exact_seq} implies that $g^0f^0$ is an isomorphism. so that $f^0$ is a split monomorphism, contradicting our assumption. So $\eta^0$ is a split monomorphism and there is a morphism $\mu:Y^1\rightarrow Y^0$ such that $\mu\eta^0=1_{Y^0}$. Then
\begin{align*}
    \mu f^1 \xi^0=\mu\eta^0 f^0=f^0,
\end{align*}
so $\xi^0$ is left almost split in $\mathcal{X}$ and we have proved (c). Dually, (c) implies (b) and it is clear that both (b) and (c) imply (a).
\end{proof}

\section{$\mathcal{X}$-covers and the left end term of a $d$-Auslander-Reiten sequence in $\mathcal{X}$}\label{section_Kleiner2}

In this section, we generalise the results in \cite[Section 2]{MK} on $\mmod\Phi$ to its higher analogue $\mathcal{F}$.
Iyama proved in \cite[Theorem 3.3.1]{IO} that if $A^{d+1}\in\mathcal{F}$ is an indecomposable non-projective, then there exists a $d$-Auslander-Reiten sequence in $\mathcal{F}$ ending at $A^{d+1}$ and starting at $D\Tr_d (A^{d+1})$, see Proposition \ref{thm_dAR_F_endt}.
The idea is to give an analogue of this result for $d$-Auslander-Reiten sequences in $\mathcal{X}$.
Consider an indecomposable $X$ in $\mathcal{X}$ that admits non-split $d$-exact sequences ending at it with terms in $\mathcal{X}$. We ``approximate'' $D\Tr_d(X)$ with an indecomposable $\sigma X$ in $\mathcal{X}$. We show there is a $d$-Auslander-Reiten sequence in $\mathcal{X}$ ending in $X$ and that this sequence is forced to start in $\sigma X$.

Recall the definition of $\mathcal{X}$-cover from Definition \ref{defn_cover_ffinite}. 
Note that the duals of all the results presented in this section are also true.

\begin{lemma}\label{lemma_cover_mono}
Let $A\in\mathcal{F}$ and $g:X\rightarrow A$ be an $\mathcal{X}$-cover. Then,
\begin{align*}
    \Ext_{\Phi}^d (-,g)\mid_{\mathcal{X}}: \Ext_{\Phi}^d (-,X)\mid_{\mathcal{X}}\,\longrightarrow \Ext_{\Phi}^d (-,A)\mid_{\mathcal{X}}
\end{align*}
is a monomorphism of contravariant functors.
\end{lemma}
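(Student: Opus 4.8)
The plan is to verify that the natural transformation is a monomorphism componentwise: a natural transformation between $\Hom$-valued (here $\Ext$-valued) functors is a monomorphism precisely when each component is injective, so for each fixed $Z\in\mathcal{X}$ I would show that
\begin{align*}
\Ext_\Phi^d(Z,g)\colon \Ext_\Phi^d(Z,X)\longrightarrow \Ext_\Phi^d(Z,A)
\end{align*}
is injective. So I fix $Z\in\mathcal{X}$ and take a class $[\delta]\in\Ext_\Phi^d(Z,X)$ with $[g\cdot\delta]=0$, aiming to prove $[\delta]=0$.

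First I would choose a convenient representative. Since $X,Z\in\mathcal{X}$ and $\mathcal{X}$ is closed under $d$-extensions (Definition \ref{defn_closed_u_ext}, Remark \ref{remark_almost_minimal}), the class $[\delta]$ is represented by a $d$-exact sequence $\delta\colon 0\to X\xrightarrow{\xi^0} X^1\to\cdots\to X^d\to Z\to 0$ in $\mathcal{F}$ with \emph{all} terms in $\mathcal{X}$. Forming the $d$-pushout of $\delta$ along $g$ (Lemma \ref{lemma_d-pushout_exists} and Remark \ref{remark_Ext_pull_push}(b)) produces a morphism of $d$-exact sequences $f\colon\delta\to g\cdot\delta$ with $f^0=g$ and identity on the right-hand term $Z$, where $g\cdot\delta$ represents $\Ext_\Phi^d(Z,g)([\delta])$.

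The heart of the argument is then the following chain of factorisations. From $[g\cdot\delta]=0$, Remark \ref{remark_Ext_pull_push}(a) gives that $g\cdot\delta$ is a split $d$-exact sequence, so by Corollary \ref{coro_split} its left-hand map $\beta^0\colon A\to B^1$ is a split monomorphism; pick a retraction $r$ with $r\beta^0=1_A$. Combining the commuting square $\beta^0 g=f^1\xi^0$ with $r\beta^0=1_A$ yields $(rf^1)\xi^0=g$, so $g$ factors through $\xi^0$. Now I invoke the two cover axioms in turn: because $X^1\in\mathcal{X}$, the precover property of $g$ factors $rf^1$ as $g h$ for some $h\colon X^1\to X$, whence $g(h\xi^0)=g$; and because $g$ is right minimal (it is a cover), $h\xi^0$ is an isomorphism. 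Hence $\xi^0$ is a split monomorphism, so $\delta$ is split by Corollary \ref{coro_split} and $[\delta]=0$, as required.

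I expect the main obstacle to be keeping the roles of the two cover axioms cleanly separated and ensuring the reduction to a representative with all terms in $\mathcal{X}$ is justified: the precover (factorisation) property is exactly what lets me pull the factorisation $(rf^1)\xi^0=g$ back into $\mathcal{X}$ via $h$, while right minimality is what upgrades the resulting endomorphism $h\xi^0$ of $X$ to an isomorphism. Should the middle terms of $\delta$ fail to lie in $\mathcal{X}$, the factorisation step breaks down, so the hypothesis that $\mathcal{X}$ is closed under $d$-extensions is genuinely used here.
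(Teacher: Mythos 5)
Your proposal is correct and follows essentially the same route as the paper's proof: reduce to a representative with all terms in $\mathcal{X}$, form the $d$-pushout $\delta\to g\cdot\delta$, use the retraction of the split sequence's left-hand map together with the commuting square to factor $g$ through $\xi^0$, then apply the precover property and right minimality of $g$ to conclude that $\xi^0$ is a split monomorphism. The only cosmetic difference is notational (your retraction $r$ and factorisation $h$ play the roles of the paper's $\gamma$ and $\eta$).
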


\begin{proof}
Given a $d$-exact sequence in $\mathcal{F}$ of the form:

\begin{align*}
\xymatrix{\delta: 0\ar[r]&X\ar[r]^{\xi^0}& X^1\ar[r]^{\xi^1}& X^2\ar[r]&\cdots\ar[r]& X^{d-1}\ar[r]^-{\xi^{d-1}}& X^d\ar[r]^{\xi^d}& X^{d+1}\ar[r]&0,
}
\end{align*}
where $X^{d+1}$ is in $\mathcal{X}$. Since $\mathcal{X}$ is closed under $d$-extensions, we may assume that $X^1,\dots,X^{d}$ are also in $\mathcal{X}$. Consider the morphism of $d$-exact sequences in $\mathcal{F}$ obtained as in Remark \ref{remark_Ext_pull_push}(b):
\begin{align*}
\xymatrix{\delta\ar[d]:&0\ar[r] &X\ar[r]^{\xi^0}\ar[d]^-{g}& X^1\ar[r]^{\xi^1}\ar[d]^-{g^1}& \cdots\ar[r]&  X^d\ar[d]^-{g^d}\ar[r]^{\xi^d}& X^{d+1}\ar@{=}[d]\ar[r]&0\\
 g\cdot \delta: & 0\ar[r]&A\ar[r]_{\alpha^0}& A^1\ar[r]_{\alpha^1}& \cdots\ar[r]& A^d\ar[r]_{\alpha^d}& X^{d+1}\ar[r]&0.
}
\end{align*}
Suppose that $g\cdot\delta$ splits, \textbf{i.e.} $[g\cdot\delta]=0$. By Remark \ref{remark_Ext_pull_push}(a), we want to prove that also $\delta$ splits so that $\Ext_{\Phi}^d (-,g)\mid_{\mathcal{X}}$ is a monomorphism.
By Remark \ref{remark_Ext_pull_push}(a), there exists a morphism $\gamma: A^1\rightarrow A$ such that $\gamma\alpha^0=1_A$. Then
\begin{align*}
    g=\gamma\alpha^0 g=\gamma g^1\xi^0.
\end{align*}
Moreover, since $X^1$ is in $\mathcal{X}$ and $g$ is an $\mathcal{X}$-cover, there is a morphism $\eta:X^1\rightarrow X$ such that $g\eta=\gamma g^1$. Then, we have
\begin{align*}
    g=\gamma g^1 \xi^0=g \eta \xi^0.
\end{align*}
As $g$ is right minimal, it follows that $\eta \xi^0$ is an isomorphism. This implies that $\xi^0$ is a split monomorphism and so $\delta$ splits, \textbf{i.e.} $[\delta]=0$ in $\Ext_{\Phi}^d (X^{d+1}, X)$.
\end{proof}

In \cite[Theorem 3.3.1]{IO}, Iyama shows that the end terms of a $d$-Auslander-Reiten sequence in $\mathcal{F}$ determine each other. We recall this result focusing on the right end term of $d$-Auslander-Reiten sequences.

\begin{defn}
[{\cite[1.4.1]{IO}}]\label{defn_d_higher_transpose}
Let $M\in\mmod\Phi$ and consider an augmented projective resolution of $M$ of the form:
\begin{align*}
    \cdots\rightarrow P_2\rightarrow P_1\rightarrow P_0\rightarrow M\rightarrow 0.
\end{align*}
The \textit{$d$th transpose of $M$} is $\Tr_d(M):=\Coker\, (\Hom_\Phi(P_{d-1},\Phi)\rightarrow \Hom_\Phi(P_{d},\Phi))$.
\end{defn}

\begin{proposition}[{\cite[Theorem 3.3.1]{IO}}]\label{thm_dAR_F_endt}
For each non-projective indecomposable object $A^{d+1}$ in $\mathcal{F}$, there exists a $d$-Auslander-Reiten sequence in $\mathcal{F}$ of the form:
\begin{align*}
\xymatrix{\delta: 0\ar[r]&A^0\ar[r]^{\alpha^0}& A^1\ar[r]^{\alpha^1}& A^2\ar[r]&\cdots\ar[r]& A^{d-1}\ar[r]^-{\alpha^{d-1}}& A^d\ar[r]^{\alpha^d}& A^{d+1}\ar[r]&0.
}
\end{align*}
Moreover, if $\delta$ is a $d$-Auslander-Reiten sequence in $\mathcal{F}$, then $A^0=D\Tr_d(A^{d+1})$, where $D(-):=\Hom_k (-,k):\mmod\Phi\rightarrow \mmod\Phi^{op}$ is the standard $k$-duality.
\end{proposition}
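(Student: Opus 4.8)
The plan is to follow the higher analogue of the classical Auslander--Reiten construction, the decisive input being Iyama's $d$-dimensional Auslander--Reiten duality for the $d$-cluster tilting subcategory $\mathcal{F}$; since the statement is exactly \cite[Theorem 3.3.1]{IO}, I only sketch the argument. Fix an indecomposable non-projective $A^{d+1}$ in $\mathcal{F}$ and write $\tau_d:=D\Tr_d$, with $\Tr_d$ as in Definition \ref{defn_d_higher_transpose}. The key tool is the $d$-Auslander--Reiten duality of \cite{IO}, a natural isomorphism
\begin{align*}
\Ext_\Phi^d(Z,\tau_d A^{d+1})\cong D\,\underline{\Hom}_\Phi(A^{d+1},Z),
\end{align*}
natural in $Z\in\mathcal{F}$. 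Evaluating at $Z=A^{d+1}$ identifies $\Ext_\Phi^d(A^{d+1},\tau_d A^{d+1})$ with $D\,\underline{\End}_\Phi(A^{d+1})$, and since $A^{d+1}$ is indecomposable and non-projective this ring is local and non-zero, with division-ring quotient $\Delta:=\underline{\End}_\Phi(A^{d+1})/\rad$.

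Next I would single out the sequence. Choose a non-zero $k$-linear functional $\lambda$ on $\underline{\End}_\Phi(A^{d+1})$ that vanishes on the radical (equivalently, one factoring through $\Delta$); under the duality $\lambda$ is a non-zero class in $\Ext_\Phi^d(A^{d+1},\tau_d A^{d+1})$, which by Remark \ref{remark_Yext_Ext} and Remark \ref{remark_almost_minimal} is represented by an almost minimal $d$-exact sequence in $\mathcal{F}$
\begin{align*}
\delta: 0\rightarrow \tau_d A^{d+1}\xrightarrow{\alpha^0} A^1\xrightarrow{\alpha^1}\cdots\xrightarrow{\alpha^{d-1}} A^d\xrightarrow{\alpha^d} A^{d+1}\rightarrow 0,
\end{align*}
with $\alpha^1,\dots,\alpha^{d-1}\in\rad_\mathcal{F}$. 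As $\lambda\neq 0$, Remark \ref{remark_Ext_pull_push}(a) shows $\delta$ is non-split, so by Corollary \ref{coro_split} the map $\alpha^0$ is not a split monomorphism; since $\tau_d A^{d+1}$ is indecomposable this forces $\alpha^0\in\rad_\mathcal{F}$ as well.

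The core step is to check that $\alpha^d$ is right almost split, after which Lemma \ref{lemma_dAR_equiv} (applied with $\mathcal{X}=\mathcal{F}$) immediately upgrades $\delta$ to a $d$-Auslander--Reiten sequence. It is not a split epimorphism because $\delta$ is non-split (Corollary \ref{coro_split}). Given a non-split epimorphism $g:Y\rightarrow A^{d+1}$ with $Y\in\mathcal{F}$, forming the $d$-pullback of $\delta$ along $g$ (Remark \ref{remark_Ext_pull_push}) shows, via Corollary \ref{coro_split}, that $g$ factors through $\alpha^d$ if and only if $\Ext_\Phi^d(g,\tau_d A^{d+1})([\delta])=0$. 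By naturality of the duality this class corresponds to the composite $\lambda\circ g_*$, where $g_*:\underline{\Hom}_\Phi(A^{d+1},Y)\rightarrow\underline{\End}_\Phi(A^{d+1})$ is post-composition with $g$; since $g$ is not a split epimorphism and $A^{d+1}$ is indecomposable, every element $gh$ of the image of $g_*$ is a non-unit of the local ring $\underline{\End}_\Phi(A^{d+1})$, hence lies in its radical, on which $\lambda$ vanishes by construction. Thus every such $g$ factors through $\alpha^d$, so $\alpha^d$ is right almost split and $\delta$ is a $d$-Auslander--Reiten sequence with left term $\tau_d A^{d+1}=D\Tr_d(A^{d+1})$.

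For the moreover part, let $\delta'$ be any $d$-Auslander--Reiten sequence ending at $A^{d+1}$. Its right-hand map is right almost split and not a split epimorphism, so it factors through $\alpha^d$ and, symmetrically, $\alpha^d$ factors through it; completing these factorisations to morphisms of $d$-exact sequences by Lemma \ref{lemma_complete_to_morph} and using that $\alpha^d$ is right minimal (Lemma \ref{lemma_rad_rmin}, since $\alpha^{d-1}\in\rad_\mathcal{F}$), the resulting endomorphism of $\delta$ is an isomorphism at $A^d$, so Lemma \ref{lemma_iso_d-exact_seq} makes it an isomorphism of $d$-exact sequences. In particular the left term of $\delta'$ is isomorphic to that of $\delta$, namely $D\Tr_d(A^{d+1})$. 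The genuine obstacle in all of this is the $d$-Auslander--Reiten duality isomorphism itself, whose proof in \cite{IO} requires the length-$d$ projective resolutions defining $\Tr_d$ together with the vanishing of $\Ext_\Phi^{1,\dots,d-1}$ on $\mathcal{F}$; once it is available, everything else is a faithful transcription of the $d=1$ argument.
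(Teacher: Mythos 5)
The paper gives no proof of this proposition at all: it is quoted directly from Iyama, \cite[Theorem 3.3.1]{IO}, so there is no internal argument to compare yours against. Your sketch is a correct reconstruction of how the result is established: the higher Auslander--Reiten duality $\Ext_{\Phi}^d(Z,D\Tr_d A^{d+1})\cong D\,\underline{\Hom}_\Phi(A^{d+1},Z)$ is indeed the decisive input in \cite{IO}, and your assembly of it --- choosing a functional $\lambda$ killing the radical of $\underline{\End}_\Phi(A^{d+1})$, realising the corresponding class by an almost minimal $d$-exact sequence via Remarks \ref{remark_Yext_Ext} and \ref{remark_almost_minimal}, and verifying right-almost-splitness through naturality together with the pullback/splitting criterion supplied by Lemma \ref{lemma_null-homotopic}, Corollary \ref{coro_split} and Remark \ref{remark_Ext_pull_push} --- is the faithful higher analogue of the classical $d=1$ argument, and the auxiliary results you quote from the paper (Lemmas \ref{lemma_dAR_equiv}, \ref{lemma_complete_to_morph}, \ref{lemma_rad_rmin}, \ref{lemma_iso_d-exact_seq}) are all used legitimately. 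Two steps you leave implicit deserve flagging. First, $\alpha^0\in\rad_\mathcal{F}$ requires $D\Tr_d A^{d+1}$ to be indecomposable; this is a further fact from \cite{IO} (the $d$-Auslander--Reiten translation preserves indecomposability) and needs its own citation alongside the duality. Second, in the uniqueness step, showing the composite $\delta\rightarrow\delta'\rightarrow\delta$ is an isomorphism of $d$-exact sequences only exhibits $A^0$ as a direct summand of the first term of $\delta'$; to finish, either run the symmetric composite $\delta'\rightarrow\delta\rightarrow\delta'$ as well (its right-hand map is right minimal by Lemma \ref{lemma_rad_rmin}, the preceding map being in the radical by definition for $d\geq 2$ and by Lemma \ref{lemma_las_rad} for $d=1$), or simply note that the first term of $\delta'$ is indecomposable by Lemma \ref{lemma_las_rad}, so the non-zero summand $A^0$ is all of it. Neither point affects the overall soundness of your sketch.
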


\begin{lemma}\label{lemma_nonzero_hdelta}
Let $X$ in $\mathcal{X}$ be an indecomposable such that $\Ext_{\Phi}^d(X,\mathcal{X})$ is non-zero. Suppose $D\Tr_d(X)$ has an $\mathcal{X}$-cover of the form $g:Y\rightarrow D\Tr_d(X)$. Then, for any non-split $d$-exact sequence in $\mathcal{F}$ of the form
 \begin{align*}
    \xymatrix{\delta: 0\ar[r]&X^0\ar[r]^-{\xi^0}& X^1\ar[r]^{\xi^1}&\cdots\ar[r]& X^{d-1}\ar[r]^-{\xi^{d-1}} & X^d\ar[r]^{\xi^d}& X\ar[r]&0,
    }
    \end{align*}
    with all terms in $\mathcal{X}$, there is a morphism $h:X^0\rightarrow Y$ such that $h\cdot \delta$ is a non-split $d$-exact sequence in $\mathcal{F}$. In particular, $\Ext_{\Phi}^d(X,Y)\neq 0$.
\end{lemma}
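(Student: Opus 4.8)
The plan is to compare the given sequence $\delta$ with the $d$-Auslander--Reiten sequence in $\mathcal{F}$ ending at $X$. First I would observe that $X$ is non-projective: since $\delta$ is non-split with $X^0\in\mathcal{X}$, Remark \ref{remark_Ext_pull_push}(a) gives $0\neq[\delta]\in\Ext_\Phi^d(X,X^0)$, and $\Ext_\Phi^d(X,-)$ vanishes on projectives. Hence Proposition \ref{thm_dAR_F_endt} supplies a $d$-Auslander--Reiten sequence in $\mathcal{F}$
\begin{align*}
\omega:\ 0\rightarrow D\Tr_d(X)\xrightarrow{\alpha^0} A^1\xrightarrow{\alpha^1}\cdots\xrightarrow{\alpha^{d-1}} A^d\xrightarrow{\alpha^d} X\rightarrow 0,
\end{align*}
whose left-hand term is forced to be $D\Tr_d(X)$.

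The next step is to lift $1_X$ to a morphism $\delta\rightarrow\omega$ of $d$-exact sequences. Since $\delta$ does not split, $\xi^d$ is not a split epimorphism by Corollary \ref{coro_split}; because $\alpha^d$ is right almost split in $\mathcal{F}$, there is $f^d:X^d\rightarrow A^d$ with $\alpha^d f^d=\xi^d$, which makes the right-hand square (with $1_X$ below it) commute. Starting from this square and using that $D\Tr_d(X)\rightarrow A^1\rightarrow\cdots\rightarrow A^d$ is a $d$-kernel of $\alpha^d$ — that is, the left-completion argument in the proof of Lemma \ref{lemma_complete_to_morph} — I would propagate leftward to produce $f^{d-1},\dots,f^0$ so that $(f^0,\dots,f^d,1_X):\delta\rightarrow\omega$ is a morphism of $d$-exact sequences, with left component $f^0:X^0\rightarrow D\Tr_d(X)$.

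I then pass to $\Ext$-classes. Because this morphism has left component $f^0$ and identity on $X$, Lemma \ref{lemma_eq_classes_d-pushout} gives $[f^0\cdot\delta]=[\omega]$ in $\Ext_\Phi^d(X,D\Tr_d(X))$. A $d$-Auslander--Reiten sequence never splits: its first map $\alpha^0$ is left almost split, hence lies in $\rad_\mathcal{F}$ by Lemma \ref{lemma_las_rad} and so is not a split monomorphism, whence $\omega$ is non-split by Corollary \ref{coro_split} and $[\omega]\neq 0$ by Remark \ref{remark_Ext_pull_push}(a). Thus $[f^0\cdot\delta]\neq 0$. Since $X^0\in\mathcal{X}$ and $g$ is an $\mathcal{X}$-cover, I factor $f^0=g\,h$ for some $h:X^0\rightarrow Y$, and functoriality of $\Ext_\Phi^d(X,-)$ yields $[f^0\cdot\delta]=\Ext_\Phi^d(X,g)\big([h\cdot\delta]\big)$. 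As $\Ext_\Phi^d(-,g)\mid_{\mathcal{X}}$ is a monomorphism by Lemma \ref{lemma_cover_mono}, evaluated at $X\in\mathcal{X}$ the map $\Ext_\Phi^d(X,g)$ is injective, so the non-vanishing of $[f^0\cdot\delta]$ forces $[h\cdot\delta]\neq 0$. Therefore $h\cdot\delta$ is a non-split $d$-exact sequence in $\mathcal{F}$, and in particular $\Ext_\Phi^d(X,Y)\neq 0$.

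The hard part will be the second step: building the comparison map $\delta\rightarrow\omega$ restricting to $1_X$. I must check that the single factorization $f^d$ of $\xi^d$ through the right almost split $\alpha^d$ can be propagated all the way to the left via the $d$-kernel property of $\omega$, and that the resulting left component $f^0$ is exactly the datum governing the class $[f^0\cdot\delta]$ through Lemma \ref{lemma_eq_classes_d-pushout}; note that this is the left-completion direction, so one invokes the $d$-kernel argument of Lemma \ref{lemma_complete_to_morph} (equivalently its dual) rather than the stated index range directly. Everything after that is routine bookkeeping with pushout functoriality and the monomorphism property of the $\mathcal{X}$-cover.
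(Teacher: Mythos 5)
Your proof is correct and follows essentially the same route as the paper's: compare $\delta$ with the $d$-Auslander--Reiten sequence ending at $X$ from Proposition \ref{thm_dAR_F_endt}, lift $1_X$ using the right almost split property of $\alpha^d$ together with the completion argument of Lemma \ref{lemma_complete_to_morph}, identify $[f^0\cdot\delta]=[\omega]\neq 0$ via Lemma \ref{lemma_eq_classes_d-pushout}, and factor $f^0=gh$ through the $\mathcal{X}$-cover. The only (harmless) deviations are that you spell out why $X$ is non-projective and flag the index-range issue in Lemma \ref{lemma_complete_to_morph} — both points the paper leaves implicit — and you invoke the injectivity of $\Ext_{\Phi}^d(X,g)$ from Lemma \ref{lemma_cover_mono} where plain functoriality (a homomorphism sends $0$ to $0$) already suffices.
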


\begin{proof}
First note that such a $\delta$ exists since $\Ext_{\Phi}^d(X,\mathcal{X})\neq 0$.
Moreover, by Proposition \ref{thm_dAR_F_endt}, there is a $d$-Auslander-Reiten sequence in $\mathcal{F}$ of the form:
    \begin{align*}
    \xymatrix{\epsilon: 0\ar[r]&D \Tr _d (X)\ar[r]^-{\alpha^0}& A^1\ar[r]^{\alpha^1}&\cdots\ar[r]&A^{d-1}\ar[r]^-{\alpha^{d-1}}&  A^d\ar[r]^{\alpha^d}& X\ar[r]&0.
    }
    \end{align*}
    Since $\xi^d$ is not a split epimorphism and $\alpha^d$ is  right almost split in $\mathcal{F}$, there is a morphism $f^d: X^d\rightarrow A^d$ such that $\alpha^d f^d=\xi^d$. Then, by Lemma \ref{lemma_complete_to_morph}, we can construct a morphism of $d$-exact sequences of the form:
    \begin{align*}
    \xymatrix{\delta\ar[d]^f:&0\ar[r] &X^0\ar[r]^{\xi^0}\ar[d]^-{f^0}& X^1\ar[r]^{\xi^1}\ar[d]^-{f^1}& \cdots\ar[r]&X^{d-1}\ar[r]^-{\xi^{d-1}}\ar[d]^-{f^{d-1}}&  X^d\ar[d]^-{f^d}\ar[r]^{\xi^d}& X\ar@{=}[d]\ar[r]&0\\
    \epsilon: & 0\ar[r]&D\Tr_d X\ar[r]_-{\alpha^0}& A^1\ar[r]_{\alpha^1}& \cdots\ar[r]&A^{d-1}\ar[r]_-{\alpha^{d-1}}& A^d\ar[r]_{\alpha^d}& X\ar[r]&0.
    }
    \end{align*}
    Since $g$ is an $\mathcal{X}$-cover, there is a morphism $h:X^0\rightarrow Y$ such that $f^0=g h$. Then, applying $\Ext_{\Phi}^d(X,-)$, we obtain the commutative diagram:
    \begin{align}\label{diagram_ext_comm}
    \begin{gathered}
        \xymatrix{
        \Ext_{\Phi}^d(X,X^0)\ar[rr]^-{\Ext_{\Phi}^d(X,f^0)}\ar[rd]_-{\Ext_{\Phi}^d(X,h)}&&\Ext_{\Phi}^d(X,D\Tr_d(X)).\\
        &\Ext_{\Phi}^d(X,Y)\ar[ru]_-{\Ext_{\Phi}^d(X,g)}&
        }
    \end{gathered}
    \end{align}
    Considering the morphism $\delta\rightarrow f^0\cdot \delta$ obtained as in Remark \ref{remark_Ext_pull_push}(b) and $f:\delta\rightarrow\epsilon$, Lemma \ref{lemma_eq_classes_d-pushout} implies that $0\neq[\epsilon]=[f^0\cdot \delta]$ in $\Ext_{\Phi}^d(X,D\Tr_d(X))$, so that $f^0\cdot \delta$ is non-split by Remark \ref{remark_Ext_pull_push}(a).
    Then, in diagram (\ref{diagram_ext_comm}), we have $\Ext_{\Phi}^{d}(X,gh)(\delta)=gh\cdot \delta=f^0\cdot\delta$ is non-split and so $[h\cdot \delta]\neq 0$, \textbf{i.e.} $h\cdot \delta$ is non-split. In particular $\Ext_{\Phi}^d(X,Y)\neq 0$.
\end{proof}

The argument from \cite[proof of Proposition V.2.1]{ARS} can be easily modified to prove the following higher version. Recall that, following Notation \ref{notation_right_modules}, modules are assumed to be right.

\begin{lemma}\label{lemma_simple_socle}
Let $A$ be an indecomposable non-projective in $\mathcal{F}$. Then we have that $\Ext_{\Phi}^d (A, D \Tr _d (A))$ has a simple socle as an $\End_{\Phi}(A)$-module.
\end{lemma}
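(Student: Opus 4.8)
The plan is to mimic the classical argument of \cite[V.2.1]{ARS}, working throughout with the right $\End_\Phi(A)$-module structure on $M:=\Ext^d_\Phi(A,D\Tr_d A)$ coming from the first variable, so that $\phi\in\End_\Phi(A)$ sends a class $[\zeta]$ to the class $[\zeta\cdot\phi]$ of the $d$-pullback of $\zeta$ along $\phi$ (Remark \ref{remark_Ext_pull_push}(b)). Since $A$ is indecomposable, $\End_\Phi(A)$ is local; writing $\mathfrak r=\rad\End_\Phi(A)$ and $F=\End_\Phi(A)/\mathfrak r$, the socle is $\soc M=\{[\zeta]\in M\mid [\zeta]\cdot\mathfrak r=0\}$, an $F$-vector space, and ``simple socle'' means $\dim_F\soc M=1$. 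First I would invoke Proposition \ref{thm_dAR_F_endt} to obtain a $d$-Auslander-Reiten sequence in $\mathcal F$
\begin{align*}
\xymatrix{\epsilon: 0\ar[r]&D\Tr_d A\ar[r]^-{\alpha^0}& A^1\ar[r]&\cdots\ar[r]& A^d\ar[r]^{\alpha^d}& A\ar[r]&0,}
\end{align*}
whose class $[\epsilon]\in M$ is nonzero because $\epsilon$ does not split (Remark \ref{remark_Ext_pull_push}(a)), and whose final map $\alpha^d$ is right almost split in $\mathcal F$.

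Next I would show that $F[\epsilon]$ is a simple submodule of $\soc M$. The key input is the criterion that, for $\phi\in\End_\Phi(A)$, the $d$-pullback $\epsilon\cdot\phi$ splits whenever $\phi$ factors through $\alpha^d$: a factorisation $\phi=\alpha^d s$ supplies the morphism $s^{d+1}$ of part (a) of Lemma \ref{lemma_null-homotopic} for the comparison morphism $\epsilon\cdot\phi\to\epsilon$ (which is the identity on $D\Tr_d A$ by construction of the $d$-pullback), so by the equivalence with part (b) the first map of $\epsilon\cdot\phi$ is a split monomorphism and Corollary \ref{coro_split} gives splitting. Since $\alpha^d$ is right almost split and $A$ is indecomposable, a $\phi\in\End_\Phi(A)$ factors through $\alpha^d$ exactly when it is not a split epimorphism, i.e.\ exactly when $\phi\in\mathfrak r$. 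Hence the right $\End_\Phi(A)$-linear map $\theta\colon\phi\mapsto[\epsilon]\cdot\phi$ satisfies $\mathfrak r\subseteq\ker\theta$; and $\ker\theta$ contains no unit, since pulling back along a unit gives a sequence isomorphic to the non-split $\epsilon$. As $\mathfrak r$ is the unique maximal ideal, $\ker\theta=\mathfrak r$, so $\theta$ induces an embedding $F\hookrightarrow M$ with image $F[\epsilon]$; in particular $[\epsilon]\cdot\mathfrak r=0$, so $F[\epsilon]$ is a one-dimensional, hence simple, $F$-submodule of $\soc M$.

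It remains to prove the reverse inclusion $\soc M\subseteq F[\epsilon]$, which I expect to be the \emph{main obstacle}. Given $0\neq[\eta]\in\soc M$, represented by a non-split $d$-exact sequence $\eta$ with final map $\eta^d\colon E^d\to A$ not a split epimorphism, right almost splitness of $\alpha^d$ lets me factor $\eta^d=\alpha^d w$ and then, via Lemma \ref{lemma_complete_to_morph}, extend $w$ to a morphism of $d$-exact sequences $\eta\to\epsilon$ that is the identity on $A$; its component on the first term is some $v\in\End_\Phi(D\Tr_d A)$ exhibiting $\epsilon$ as a $d$-pushout of $\eta$, so that $[\epsilon]=v\cdot[\eta]$ in the \emph{left} $\End_\Phi(D\Tr_d A)$-module structure. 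The difficulty is that this relation lives on the ``wrong'' side: the two actions commute, making $M$ an $(\End_\Phi(D\Tr_d A),\End_\Phi(A))$-bimodule, but bimodule formalism alone does not collapse $[\eta]$ onto the line $F[\epsilon]$.

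The clean way to finish, and the crux of the argument, is the $d$-Auslander-Reiten duality of Iyama (the higher analogue of the duality underlying \cite[V.2.1]{ARS}), which yields an isomorphism of right $\End_\Phi(A)$-modules $\Ext^d_\Phi(A,D\Tr_d A)\cong D\,\underline{\End}_\Phi(A)$. Since $A$ is indecomposable and non-projective, $\underline{\End}_\Phi(A)$ is a local $k$-algebra, so $D\,\underline{\End}_\Phi(A)$ is the $k$-dual of a local ring: its socle over $\End_\Phi(A)$ is the dual of the simple top $\underline{\End}_\Phi(A)/\rad$, hence one-dimensional over the residue division ring. This identifies $\soc M$ with that simple socle and forces $\soc M=F[\epsilon]$, completing the proof. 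Throughout, the $\rad_{\mathcal F}$-conditions relevant when $d\geq 2$ enter only to normalise $\epsilon$ and $\eta$ to almost minimal form (Remark \ref{remark_almost_minimal}), so that Lemma \ref{lemma_iso_d-exact_seq} is available for comparing sequences.
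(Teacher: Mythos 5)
Your proof is correct and its crux --- Iyama's higher Auslander-Reiten duality $\Ext_{\Phi}^d(A, D\Tr_d A)\cong D\,\underline{\End}_\Phi(A)$ combined with the fact that the $k$-dual of a local algebra has simple socle (the annihilator of the radical, one-dimensional over the residue division ring) --- is exactly the modification of \cite[proof of Proposition V.2.1]{ARS} that the paper invokes without spelling out. Your first two paragraphs, which construct a $d$-Auslander-Reiten sequence $\epsilon$ and show its class spans a simple submodule of the socle, are not needed for the present statement but correctly establish the stronger fact that the socle is generated by such a sequence, which the paper records separately as Lemma \ref{lemma_higher_X_ARS}.
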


\begin{proposition}\label{thm_cover}
\begin{enumerate}[label=(\alph*)]
\item Let $X$ in $\mathcal{X}$ be an indecomposable such that $\Ext_{\Phi}^d(X,\mathcal{X})$ is non-zero. If $D\Tr_d (X)$ has an $\mathcal{X}$-cover of the form $g:Y\rightarrow D\Tr_d(X)$, then $Y=Z\oplus Z'$, where $Z$ is an indecomposable such that $\Ext_{\Phi}^d(X,Z)\neq 0$ and $\Ext_{\Phi}^d(X,Z')=0$. The module $Z$ is unique up to isomorphism.
\item In the setting of (a), a non-split $d$-exact sequence of the form
\begin{align*}
    \xymatrix{
    \epsilon: 0\ar[r]&Y\ar[r]^{\eta^0}& Y^1\ar[r]^{\eta^1}& Y^2\ar[r]^{\eta^2}&\cdots\ar[r]& Y^{d-1}\ar[r]^-{\eta^{d-1}}& Y^d\ar[r]^{\eta^d}& X\ar[r]&0
    }
\end{align*}
is isomorphic to the direct sum of the split $d$-exact sequence:
\begin{align*}
    \xymatrix{
    0\ar[r]&Z'\ar[r]^{1_{Z'}}& Z'\ar[r]& 0\ar[r]&\cdots\ar[r]& 0\ar[r]& 0\ar[r]& 0\ar[r]&0
    }
\end{align*}
and a non-split $d$-exact sequence of the form
\begin{align*}
    \xymatrix{
    0\ar[r]&Z\ar[r]^{\zeta^0}& V\ar[r]^{\zeta^1}& Y^2\ar[r]^{\eta^2}&\cdots\ar[r]& Y^{d-1}\ar[r]^-{\eta^{d-1}}& Y^d\ar[r]^{\eta^d}& X\ar[r]&0.
    }
\end{align*}
\end{enumerate}
\end{proposition}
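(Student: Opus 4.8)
The plan is to handle the two parts in turn, transferring information from $D\Tr_d(X)$ to $Y$ through the $\mathcal{X}$-cover $g$ by means of the three facts already available: Lemma~\ref{lemma_nonzero_hdelta} gives $\Ext_{\Phi}^d(X,Y)\neq 0$; Lemma~\ref{lemma_cover_mono}, evaluated at $X\in\mathcal{X}$, says that $\Ext_{\Phi}^d(X,g)\colon\Ext_{\Phi}^d(X,Y)\to\Ext_{\Phi}^d(X,D\Tr_d(X))$ is injective; and Lemma~\ref{lemma_simple_socle} says the target has simple socle as an $\End_\Phi(X)$-module. Note first that $X$ is non-projective, since $\Ext_{\Phi}^d(X,\mathcal{X})\neq 0$ would otherwise force $\Ext_{\Phi}^d(X,-)=0$; hence Lemma~\ref{lemma_simple_socle} and Proposition~\ref{thm_dAR_F_endt} apply to $X$.

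For part (a) I would write $Y=\bigoplus_{i=1}^n Y_i$ with each $Y_i$ indecomposable; as $\mathcal{X}$ is closed under summands, each $Y_i\in\mathcal{X}$. By bifunctoriality the injection $\Ext_{\Phi}^d(X,g)$ is $\End_\Phi(X)$-linear, and $\Ext_{\Phi}^d(X,Y)=\bigoplus_i\Ext_{\Phi}^d(X,Y_i)$ is a decomposition of $\End_\Phi(X)$-modules. All these are finite length $\End_\Phi(X)$-modules, so a nonzero submodule of a module with simple socle again has simple socle; applying this to the image of $\Ext_{\Phi}^d(X,g)$ shows $\Ext_{\Phi}^d(X,Y)$ has simple socle. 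Since the socle of a direct sum is the direct sum of the socles and $\soc N=0$ exactly when $N=0$, precisely one summand $\Ext_{\Phi}^d(X,Y_i)$ is nonzero. Setting $Z:=Y_i$ and $Z':=\bigoplus_{j\neq i}Y_j$ yields the stated decomposition, and uniqueness of $Z$ up to isomorphism follows from Krull--Schmidt uniqueness together with its characterisation as the unique summand on which $\Ext_{\Phi}^d(X,-)$ does not vanish.

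For part (b), write $\iota_{Z'}\colon Z'\to Y$ and $\pi_{Z'}\colon Y\to Z'$ for the canonical maps, noting $Z,Z'\in\mathcal{X}$. First I would form, via Lemma~\ref{lemma_d-pushout_exists}, the $d$-pushout $\pi_{Z'}\cdot\epsilon$ of $\epsilon$ along $\pi_{Z'}$, with its comparison chain map $\epsilon\to\pi_{Z'}\cdot\epsilon$ which is the identity on the terms $Y^2,\dots,Y^d,X$ and has first component $\pi_{Z'}$. Its class lies in $\Ext_{\Phi}^d(X,Z')=0$, so by Remark~\ref{remark_Ext_pull_push}(a) and Corollary~\ref{coro_split} the sequence $\pi_{Z'}\cdot\epsilon$ splits and its left-hand map $\beta^0\colon Z'\to W^1$ is a split monomorphism. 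Using the comparison relation $v^1\eta^0=\beta^0\pi_{Z'}$ and a retraction $r$ of $\beta^0$, the composite $rv^1\colon Y^1\to Z'$ satisfies $rv^1\eta^0\iota_{Z'}=1_{Z'}$, so $\eta^0\iota_{Z'}\colon Z'\to Y^1$ is a split monomorphism. Choosing a complement $Y^1=V\oplus Z'$ with $Z'$ embedded by $\eta^0\iota_{Z'}$, the complex relation $\eta^1\eta^0=0$ shows $\eta^1$ annihilates this $Z'$-summand, and a single change of basis on $Y=Z\oplus Z'$ (subtracting the $Z\to Z'$ component of $\eta^0$) puts $\eta^0$ in block-diagonal form $\left(\begin{smallmatrix}\zeta^0&0\\0&1\end{smallmatrix}\right)$ with respect to $Y=Z\oplus Z'$ and $Y^1=V\oplus Z'$. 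This exhibits $\epsilon$ as the direct sum of the split sequence on $Z'$ and a sequence $0\to Z\xrightarrow{\zeta^0}V\xrightarrow{\zeta^1}Y^2\to\cdots\to Y^d\to X\to 0$; the latter is $d$-exact as a complementary direct summand of the $d$-exact $\epsilon$, and non-split, since otherwise $\epsilon$ would split.

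The routine work is the socle bookkeeping in (a) and the block-matrix manipulation in (b). The main obstacle is the key claim in (b) that $\eta^0\iota_{Z'}$ is a split monomorphism: that is, upgrading the vanishing $\Ext_{\Phi}^d(X,Z')=0$, which is a statement about equivalence classes, to an honest splitting that peels $Z'$ off the first map while leaving the tail $Y^2,\dots,Y^d$ untouched. This is precisely where the compatibility of the $d$-pushout comparison map with the identity on the tail, furnished by Lemma~\ref{lemma_d-pushout_exists}, is essential.
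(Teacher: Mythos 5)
Your proposal is correct and takes essentially the same route as the paper: part (a) is the same socle argument (combine the injectivity from Lemma \ref{lemma_cover_mono}, the simple socle from Lemma \ref{lemma_simple_socle}, and the non-vanishing from Lemma \ref{lemma_nonzero_hdelta}), and part (b) is the same pushout-along-the-projection argument, splitting the pushed-out sequence using $\Ext_{\Phi}^d(X,Z')=0$ and composing the first comparison square with a retraction to conclude that $\eta^0\iota_{Z'}$ is a split monomorphism, after which the block decomposition is routine.

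One correction, though it does not invalidate the proof: Lemma \ref{lemma_d-pushout_exists} does \emph{not} furnish a comparison map $\epsilon\rightarrow\pi_{Z'}\cdot\epsilon$ that is the identity on $Y^2,\dots,Y^d$. A $d$-pushout replaces \emph{all} the middle terms (in the paper's diagram they become $W^1,\dots,W^d$); only the right end term $X$ is preserved. Fortunately your argument never uses this claim: the splitting of $\eta^0\iota_{Z'}$ needs only the commutativity of the first square, $v^1\eta^0=\beta^0\pi_{Z'}$, together with the fact that $\pi_{Z'}\cdot\epsilon$ ends at $X$ (so that its class lies in $\Ext_{\Phi}^d(X,Z')$), and the tail $Y^2,\dots,Y^d$ survives in the final decomposition because the peeling-off of $Z'$ takes place inside $\epsilon$ itself --- it only modifies $Y$ and $Y^1$, with $\eta^1\eta^0=0$ killing the $Z'$-summand --- not because of any property of the pushout. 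So the compatibility you single out as ``essential'' in your closing paragraph is both unavailable and unnecessary; with that clause deleted, your proof matches the paper's.
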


\begin{proof}
(a) Let $Y=Z_1\oplus\dots\oplus Z_m$ be the indecomposable decomposition of $Y$. By Lemma \ref{lemma_cover_mono}, we have a monomorphism:
    \begin{align*}
    \Ext_{\Phi}^d (X,g): \Ext_{\Phi}^d (X,Y)\,\longrightarrow \Ext_{\Phi}^d (X,D\Tr_d(X)),
    \end{align*}
    which is also a monomorphism of $\End_{\Phi}(X)$-modules. Hence $\im \Ext_{\Phi}^d (X,g)$ is an $\End_{\Phi}(X)$-submodule of $\Ext_{\Phi}^d(X,D\Tr_d(X))$ isomorphic to 
    \begin{align*}
    \Ext_{\Phi}^d(X,Y)\cong\bigoplus_{j=1}^m \Ext_{\Phi}^d(X, Z_j).
    \end{align*}
    Since $\Ext_{\Phi}^d(X,\mathcal{X})\neq 0$, it follows that $X$ is not projective in $\mmod\Phi$. Then, viewed as an $\End_{\Phi}(X)$-module, $\Ext_{\Phi}^d(X, D\Tr_d(X))$ has simple socle by Lemma \ref{lemma_simple_socle}. Hence $\im\Ext_{\Phi}^d(X,g)$ is either zero or an indecomposable $\End_{\Phi}(X)$-module. So there is at most one $j\in\{ 1,\dots,\,m \}$ such that $\Ext_{\Phi}^d(X,Z_j)$ is non-zero.
    Note that $\Ext_{\Phi}^d(X,Y)$ is non-zero by Lemma \ref{lemma_nonzero_hdelta}. Hence there is exactly one $j\in\{ 1,\dots,\,m \}$ such that $\Ext_{\Phi}^d(X,Z_j)$ is non-zero.
   
    (b) By Lemma \ref{lemma_d-pushout_exists}, there is a morphism of $d$-exact sequences of the form:
    \begin{align}\label{diagram_isomorphic_epsilon}
    \begin{gathered}
    \xymatrix{
    \epsilon:\ar[d] 0\ar[r]&Z'\oplus Z\ar[rr]^-{\eta^0=(\xi',\xi)}\ar[d]^{(1,0)}&& Y^1\ar[r]^{\eta^1}\ar[d]& Y^2\ar[r]^{\eta^2}\ar[d]&\cdots\ar[r]^-{\eta^{d-1}}& Y^d\ar[r]^{\eta^d}\ar[d]& X\ar[r]\ar@{=}[d]&0\\
    \overline{\epsilon}:0\ar[r]& Z'\ar[rr]_{\omega^0}&& W^1\ar[r]_{\omega^1}& W^2\ar[r]_{\omega^2}&\cdots\ar[r]_-{\omega^{d-1}}& W^d\ar[r]_{\omega^d}&X\ar[r]&0.
    }
    \end{gathered}
    \end{align}
    Since $\Ext_{\Phi}^d(X,Z')=0$, the bottom row is a split $d$-exact sequence by Remark \ref{remark_Ext_pull_push}(a). Hence, we have that $\overline{\epsilon}$ is isomorphic to:
    \begin{align*}
        \xymatrix{
        0\ar[r]& Z'\ar[rr]^-{\begin{psmallmatrix}1_{Z'}\\0\end{psmallmatrix}}&& Z'\oplus\overline{W^1}\ar[r]^-{(0,\gamma)}& W^2\ar[r]^{\omega^2}&\cdots\ar[r]^-{\omega^{d-1}}& W^d\ar[r]^{\omega^d}&X\ar[r]&0.
        }
    \end{align*}
    Then the morphism (\ref{diagram_isomorphic_epsilon}) is isomorphic to the morphism:
    \begin{align*}
        \xymatrix{
        \epsilon: \ar[d]0\ar[r]&Z'\oplus Z\ar[r]^-{(\xi',\xi)}\ar[d]^{(1,0)}& Y^1\ar[r]^{\eta^1}\ar[d]^-{\begin{psmallmatrix} \mu'\\\mu \end{psmallmatrix}}& Y^2\ar[r]^{\eta^2}\ar[d]&\cdots\ar[r]^-{\eta^{d-1}}& Y^d\ar[r]^{\eta^d}\ar[d]& X\ar[r]\ar@{=}[d]&0\\
        \overline{\epsilon}:0\ar[r]& Z'\ar[r]_-{\begin{psmallmatrix}1_{Z'}\\0\end{psmallmatrix}}& Z'\oplus\overline{W^1}\ar[r]_-{(0,\gamma)}& W^2\ar[r]_{\omega^2}&\cdots\ar[r]_-{\omega^{d-1}}& W^d\ar[r]_{\omega^d}&X\ar[r]&0.
        }
    \end{align*}
    In particular, $\mu'\xi'=1_{Z'}$ and so $Y^1=Z'\oplus V$ and $\epsilon$ isomorphic to a $d$-exact sequence of the form:
    \begin{align*}
        \xymatrix{
        \epsilon: 0\ar[r]&Z'\oplus Z\ar[rr]^-{\begin{psmallmatrix}1_{Z'}& 0\\0&\zeta^0\end{psmallmatrix}}&& Z'\oplus V\ar[r]^-{(0,\zeta^1)}& Y^2\ar[r]^{\eta^2}&\cdots\ar[r]^-{\eta^{d-1}}& Y^d\ar[r]^{\eta^d}& X\ar[r]&0.
        }
    \end{align*}
    Clearly, this is isomorphic to the direct sum of the two $d$-exact sequences we wanted, where the one starting at $Z$ does not split since $\epsilon$ does not split.
\end{proof}

\begin{defn}
Suppose that $\mathcal{X}$ is precovering in $\mathcal{F}$ and let $X$ be an indecomposable in $\mathcal{X}$. If $\Ext_{\Phi}^d(X,\mathcal{X})=0$ we put $\sigma X=0$. Otherwise, letting $g:Y\rightarrow D\Tr_d(X)$ be an $\mathcal{X}$-cover,  we denote by $\sigma X$ the unique indecomposable direct summand $Z$ of $Y$ such that $\Ext_{\Phi}^d(X,Z)\neq 0$.
\end{defn}

\begin{corollary}
Let $\mathcal{X}$ be precovering in $\mathcal{F}$ and let
\begin{align*}
    \xymatrix{\delta: 0\ar[r]&X^0\ar[r]^-{\xi^0}& X^1\ar[r]^{\xi^1}&\cdots\ar[r]& X^{d-1}\ar[r]^-{\xi^{d-1}} & X^d\ar[r]^{\xi^d}& X\ar[r]&0
    }
\end{align*}
be a $d$-Auslander-Reiten sequence in $\mathcal{X}$. Then $X^0\cong \sigma X$.
\end{corollary}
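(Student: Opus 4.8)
The plan is to exhibit an explicit morphism $X^0\to\sigma X$ and prove it is an isomorphism, using the left almost split property of $\xi^0$ together with Proposition \ref{thm_cover} and Lemma \ref{lemma_nonzero_hdelta}.

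First I would collect the preliminary facts. Since $\xi^d$ is right almost split in $\mathcal{X}$ it is not a split epimorphism, so by Corollary \ref{coro_split} the sequence $\delta$ does not split, and by Remark \ref{remark_Ext_pull_push}(a) this gives $[\delta]\neq 0$ in $\Ext_{\Phi}^d(X,X^0)$. In particular $\Ext_{\Phi}^d(X,\mathcal{X})\neq 0$ (and $X$ is non-projective), so $\sigma X\neq 0$; fixing an $\mathcal{X}$-cover $g:Y\rightarrow D\Tr_d(X)$, Proposition \ref{thm_cover}(a) yields $Y=\sigma X\oplus Z'$ with $\Ext_{\Phi}^d(X,\sigma X)\neq 0$ and $\Ext_{\Phi}^d(X,Z')=0$. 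Moreover $\xi^0$ being left almost split forces $\End_\Phi(X^0)$ to be local by Lemma \ref{lemma_las_rad}, so $X^0$ is indecomposable, and $\sigma X$ is indecomposable by definition.

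Next I would apply Lemma \ref{lemma_nonzero_hdelta} to $\delta$ to obtain a morphism $h:X^0\rightarrow Y$ such that $h\cdot\delta$ is non-split. Writing $h_0:X^0\rightarrow\sigma X$ and $h_1:X^0\rightarrow Z'$ for the two components of $h$, the class $h_1\cdot\delta$ lies in $\Ext_{\Phi}^d(X,Z')=0$, so the non-splitness of $h\cdot\delta$ forces $h_0\cdot\delta\neq 0$ in $\Ext_{\Phi}^d(X,\sigma X)$. The key step is then to show that $h_0$ is a split monomorphism. Since $\xi^0$ is left almost split in $\mathcal{X}$ and $\sigma X\in\mathcal{X}$, it is enough to check that $h_0$ does not factor through $\xi^0$: indeed, if $h_0=t\xi^0$ for some $t:X^1\rightarrow\sigma X$, then by functoriality of $\Ext_{\Phi}^d(X,-)$ we would have $h_0\cdot\delta=t\cdot(\xi^0\cdot\delta)$, which vanishes because $\xi^0\cdot\delta$ is the pushout of $\delta$ along its own first map and hence splits, contradicting $h_0\cdot\delta\neq 0$. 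Therefore $h_0$ does not factor through $\xi^0$, so it is a split monomorphism; being a split monomorphism between the indecomposables $X^0$ and $\sigma X$, it is an isomorphism, and $X^0\cong\sigma X$.

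The main obstacle is justifying that $\xi^0\cdot\delta=0$, i.e.\ that pushing a $d$-exact sequence out along its own first morphism yields a split sequence. Here I would use that a $d$-exact sequence in $\mathcal{F}$ is exact in $\mmod\Phi$, so that $[\delta]\in\Ext_{\Phi}^d(X,X^0)$ is the Yoneda product of the short exact sequences into which $\delta$ decomposes, the first of which is $0\rightarrow X^0\xrightarrow{\xi^0}X^1\rightarrow\Coker\xi^0\rightarrow 0$; since pushing a short exact sequence out along its own inflation splits it (the two copies of $\mathrm{id}_{X^1}$ induce a retraction of the pushout), the pushout of $[\delta]$ along $\xi^0$ vanishes, which is exactly $\xi^0\cdot\delta=0$.
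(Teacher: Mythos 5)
Your proof is correct, and its overall skeleton matches the paper's: exploit the non-splitness of $\delta$, fix an $\mathcal{X}$-cover $g:Y\rightarrow D\Tr_d(X)$, invoke Lemma \ref{lemma_nonzero_hdelta} to obtain $h:X^0\rightarrow Y$ with $h\cdot\delta$ non-split, use the left almost split property of $\xi^0$ to manufacture a split monomorphism, and conclude via Proposition \ref{thm_cover}(a). Where you genuinely diverge is the mechanism for the crucial non-factoring step. The paper applies Lemma \ref{lemma_null-homotopic} to the morphism $\delta\rightarrow h\cdot\delta$: since the last morphism of $h\cdot\delta$ is not a split epimorphism, $h$ itself cannot factor through $\xi^0$, hence $h$ is a split monomorphism, so $X^0$ is an indecomposable direct summand of $Y$ with $\Ext_{\Phi}^d(X,X^0)\neq 0$, and the uniqueness clause of Proposition \ref{thm_cover}(a) gives $X^0\cong\sigma X$. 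You instead decompose $h=(h_0,h_1)$ along $Y=\sigma X\oplus Z'$, note $h_1\cdot\delta=0$ because $\Ext_{\Phi}^d(X,Z')=0$, and rule out a factorisation $h_0=t\xi^0$ using the vanishing $\xi^0\cdot\delta=0$, which you prove by splicing $\delta$ into short exact sequences and observing that a short exact sequence pushed out along its own inflation splits. All of this is valid: the splice description of $[\delta]$, the compatibility of pushforward with Yoneda composition, and your retraction argument for the self-pushout are standard and correct. As for what each approach buys: the paper's route is shorter, since Lemma \ref{lemma_null-homotopic} makes both the component decomposition and the auxiliary vanishing unnecessary; your route produces an explicit isomorphism $h_0:X^0\rightarrow\sigma X$ rather than identifying $X^0$ abstractly as the unique relevant summand, at the cost of importing classical Yoneda-Ext facts not stated in the paper. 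One remark: your key fact $\xi^0\cdot\delta=0$ can be kept entirely inside the paper's toolkit, since by Lemma \ref{lemma_defect_subf} the class $[\delta]$ is the image of $1_X$ under the connecting map $(X,X)\rightarrow\Ext_{\Phi}^d(X,X^0)$, and the next map in that exact sequence is precisely $\Ext_{\Phi}^d(X,\xi^0)$, so exactness gives the vanishing without any splicing argument.
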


\begin{proof}
Note that the existence of $\delta$ implies that $\Ext_{\Phi}^d(X,\mathcal{X})$ is non-zero.
As $\mathcal{X}$ is precovering in $\mathcal{F}$, there is an $\mathcal{X}$-cover $g:Y\rightarrow D\Tr_d(X)$. Then, by Lemma \ref{lemma_nonzero_hdelta}, there is a morphism of non-split $d$-exact sequences in $\mathcal{F}$ of the form:
 \begin{align*}
    \xymatrix{\delta:\ar[d]&0\ar[r] &X^0\ar[r]^{\xi^0}\ar[d]^-{h}& X^1\ar[r]^{\xi^1}\ar[d]^-{h^1}& \cdots\ar[r]&X^{d-1}\ar[r]^-{\xi^{d-1}}\ar[d]^-{h^{d-1}}&  X^d\ar[d]^-{h^d}\ar[r]^{\xi^d}& X\ar@{=}[d]\ar[r]&0\\
    h\cdot \delta: & 0\ar[r]&Y\ar[r]_{\eta^0}& Y^1\ar[r]_{\eta^1}& \cdots\ar[r]&Y^{d-1}\ar[r]_-{\eta^{d-1}}& Y^d\ar[r]_{\eta^d}& X\ar[r]&0.
    }
\end{align*}
Since $\eta^d$ is not a split epimorphism, Lemma \ref{lemma_null-homotopic} implies that $h$ does not factor through $\xi^0$. As $\xi^0$ is a left almost split morphism in $\mathcal{X}$, it follows that $h$ is a split monomorphism. Hence  $X^0$ is an indecomposable direct summand of $Y$ such that $\Ext_{\Phi}^d (X,X^0)\neq 0$ and Proposition \ref{thm_cover}(a) implies that $X^0\cong \sigma X$.
\end{proof}

\begin{lemma}\label{lemma_defect_subf}
Any $d$-exact sequence in $\mathcal{F}$ of the form:
\begin{align*}
\xymatrix{\delta: 0\ar[r]&A^0\ar[r]^{\alpha^0}& A^1\ar[r]^{\alpha^1}& A^2\ar[r]&\cdots\ar[r]& A^{d-1}\ar[r]^-{\alpha^{d-1}}& A^d\ar[r]^{\alpha^d}& A^{d+1}\ar[r]&0,
}
\end{align*}
induces the exact sequences
\begin{align*}
\xymatrix@C=1em{
0\ar[r]& (B,A^0)\ar[r]&\cdots\ar[r]&(B, A^d)\ar[r] &(B,A^{d+1})\ar[r]&\Ext_{\Phi}^d(B,A^0)\ar[r]&\Ext_{\Phi}^d(B,A^1),
\\
0\ar[r]& (A^{d+1},B)\ar[r]&\cdots\ar[r]& (A^{1},B)\ar[r]&(A^0,B)\ar[r]&\Ext_{\Phi}^d(A^{d+1},B)\ar[r]&\Ext_{\Phi}^d(A^d,B),
}
\end{align*}
for any $B$ in $\mathcal{F}$.
\end{lemma}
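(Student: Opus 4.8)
The plan is to prove the first exact sequence in detail and to obtain the second by a dual argument. The initial portion $0\rightarrow (B,A^0)\rightarrow\cdots\rightarrow (B,A^d)\rightarrow (B,A^{d+1})$ is exact for free: since $\delta$ is $d$-exact, the diagram $A^0\rightarrow\cdots\rightarrow A^d$ is a $d$-kernel of $\alpha^d$, and by Definition \ref{defn_d_Jasso}(a) this is literally the statement that applying $(B,-)$ to it yields an exact sequence. So the only genuine content is the construction of a connecting map $(B,A^{d+1})\rightarrow\Ext_\Phi^d(B,A^0)$ together with exactness at $(B,A^{d+1})$ and at $\Ext_\Phi^d(B,A^0)$.

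To build the connecting map I would leave the $d$-abelian world and work in the ambient abelian category $\mmod\Phi$, using the fact recorded in Definition \ref{defn_dct_sub} that a $d$-exact sequence in $\mathcal{F}$ is exact in $\mmod\Phi$. Thus I can splice $\delta$ into honest short exact sequences $E_i: 0\rightarrow Z^{i-1}\rightarrow A^i\rightarrow Z^i\rightarrow 0$ for $1\leq i\leq d$, where $Z^i=\im\alpha^i$, with $Z^0=A^0$ and $Z^d=A^{d+1}$. Applying $\Hom_\Phi(B,-)$ to each $E_i$ gives the usual long exact $\Ext$-sequences in $\mmod\Phi$, with connecting maps $\delta_i:\Ext_\Phi^{d-i}(B,Z^i)\rightarrow\Ext_\Phi^{d-i+1}(B,Z^{i-1})$, so that $\delta_d$ starts at $\Ext_\Phi^0(B,Z^d)=(B,A^{d+1})$ and $\delta_1$ ends at $\Ext_\Phi^d(B,Z^0)=\Ext_\Phi^d(B,A^0)$.

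The crucial input is the $d$-cluster tilting vanishing of Definition \ref{defn_dct_sub}(a): since $B$ and every $A^i$ lie in $\mathcal{F}$, we have $\Ext_\Phi^j(B,A^i)=0$ for $1\leq j\leq d-1$. Feeding this into the long exact sequences of the $E_i$, I would verify that $\delta_i$ is an isomorphism for $2\leq i\leq d-1$, that $\delta_d$ is surjective with kernel $\im((B,A^d)\rightarrow(B,A^{d+1}))$, and that $\delta_1$ is injective with image $\ker(\Ext_\Phi^d(B,\alpha^0))$. Defining the connecting map as the composite $\theta:=\delta_1\cdots\delta_d$, exactness at $(B,A^{d+1})$ holds because $\delta_1,\dots,\delta_{d-1}$ are injective, whence $\ker\theta=\ker\delta_d$, and exactness at $\Ext_\Phi^d(B,A^0)$ holds because $\delta_2,\dots,\delta_d$ are surjective, whence $\im\theta=\im\delta_1$. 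One can also identify $\theta(\beta)$ with the pullback class $[\delta\cdot\beta]$ of Remark \ref{remark_Ext_pull_push}(b); the case $d=1$ is simply the classical long exact sequence, where $\theta$ is the usual connecting homomorphism.

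The second exact sequence is obtained dually: splice $\delta$ as above, apply $\Hom_\Phi(-,B)$, use the $d$-cokernel property of $\delta$ for the $\Hom$-part and the other defining vanishing $\Ext_\Phi^j(A^i,B)=0$ for $1\leq j\leq d-1$ from Definition \ref{defn_dct_sub}(a) for the splicing of connecting maps. The main obstacle is the index bookkeeping in the third step: one must check that exactly the right connecting maps are isomorphisms, injective, or surjective so that the single composite $\theta$ simultaneously has kernel $\im((B,A^d)\rightarrow(B,A^{d+1}))$ and image $\ker\Ext_\Phi^d(B,\alpha^0)$. The intermediate images $Z^i$ need not lie in $\mathcal{F}$, but this causes no difficulty, since the vanishing invoked only concerns $\Ext_\Phi^\bullet(B,A^i)$ with $A^i\in\mathcal{F}$.
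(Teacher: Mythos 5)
Your proof is correct: the index bookkeeping works out exactly as claimed (for $d\geq 2$ the vanishing $\Ext_{\Phi}^{1,\dots,d-1}(B,A^i)=0$ from Definition \ref{defn_dct_sub}(a) makes $\delta_1$ injective with image $\Ker\Ext_{\Phi}^d(B,\alpha^0)$, $\delta_i$ an isomorphism for $2\leq i\leq d-1$, and $\delta_d$ surjective with kernel $\im\bigl((B,A^d)\rightarrow(B,A^{d+1})\bigr)$, while $d=1$ degenerates to the classical long exact sequence), and the dual argument for the contravariant sequence goes through the same way. The paper gives no argument of its own --- its proof is the single citation \cite[Proposition 2.2]{JK} --- and your splicing-plus-dimension-shifting argument is essentially the standard proof of that cited result, so you have in effect supplied the details the paper outsources.
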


\begin{proof}
See \cite[Proposition 2.2]{JK}.
\end{proof}

\begin{defn}{\cite[Definition 3.1]{JK}}\label{defn_defect}
Consider a $d$-exact sequence in $\mathcal{F}$ of the form:
\begin{align*}
\xymatrix{\delta: 0\ar[r]&A^0\ar[r]^{\alpha^0}& A^1\ar[r]^{\alpha^1}& A^2\ar[r]&\cdots\ar[r]& A^{d-1}\ar[r]^-{\alpha^{d-1}}& A^d\ar[r]^{\alpha^d}& A^{d+1}\ar[r]&0.
}
\end{align*}
We define $\delta^*$, the \textit{contravariant defect of $\delta$ on $\mathcal{F}$}, by the exact sequence of functors
\begin{align*}
    (-,A^d)\rightarrow (-,A^{d+1})\rightarrow \delta^*(-)\rightarrow 0.
\end{align*}
Dually, we define $\delta_*$, the \textit{covariant defect of $\delta$ on $\mathcal{F}$}, by the exact sequence of functors
\begin{align*}
    (A^1,- )\rightarrow (A^0,-)\rightarrow \delta_*(-)\rightarrow 0.
\end{align*}
\end{defn}

\begin{remark}\label{remark_subfunctor_defect}
Note that, by Lemma \ref{lemma_defect_subf}, we have that $\delta^{*}(-)$ is a subfunctor of $\Ext_{\Phi}^d(-,A^0)\mid_{\mathcal{F}}$ and $\delta_*(-)$ is a subfunctor of $\Ext_{\Phi}^d(A^{d+1},-)\mid_{\mathcal{F}}$.
\end{remark}

\begin{lemma}\label{lemma_dimension_defect}
Consider a $d$-exact sequence in $\mathcal{F}$ with all terms in $\mathcal{X}$ of the form:
\begin{align*}
    \xymatrix{\delta: 0\ar[r]&X^0\ar[r]^-{\xi^0}& X^1\ar[r]^{\xi^1}&\cdots\ar[r]& X^{d-1}\ar[r]^-{\xi^{d-1}} & X^d\ar[r]^{\xi^d}& X^{d+1}\ar[r]&0
    }
\end{align*}
and an $\mathcal{X}$-cover $g:X\rightarrow A$ for some $A\in\mathcal{F}$. The $k$-linear map $(X^0,g):(X^0,X)\rightarrow (X^0,A)$ induces an isomorphism of $k$-vector spaces:
\begin{align*}
    \delta_*(g):\delta_*(X)\xrightarrow{\sim}\delta_*(A).
\end{align*}
In particular, $\dim_k (\delta_*(X))=\dim_k (\delta_*(A))$.
\end{lemma}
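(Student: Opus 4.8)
The plan is to show that the induced map $\delta_*(g)$ is both surjective and injective, and then to read off the dimension equality from the resulting isomorphism (both spaces being finite-dimensional, since $\Hom$-spaces in $\mathcal{F}\subseteq\mmod\Phi$ are). Recall from Definition \ref{defn_defect} that $\delta_*(B)=\Coker\big((X^1,B)\xrightarrow{(\xi^0)^*}(X^0,B)\big)$, where $(\xi^0)^*$ denotes precomposition with $\xi^0$. Functoriality in the covariant variable produces a commutative ladder whose vertical maps are $(X^1,g)$ and $(X^0,g)$, and the induced map on cokernels is exactly $\delta_*(g)$. Writing $\pi_X:(X^0,X)\twoheadrightarrow\delta_*(X)$ and $\pi_A:(X^0,A)\twoheadrightarrow\delta_*(A)$ for the canonical projections, we have $\pi_A\circ(X^0,g)=\delta_*(g)\circ\pi_X$.

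For surjectivity, I would use that $X^0\in\mathcal{X}$ and that $g$ is an $\mathcal{X}$-precover: every $\psi:X^0\rightarrow A$ factors as $\psi=g\tilde\psi$, so $(X^0,g):(X^0,X)\rightarrow(X^0,A)$ is surjective. As $\pi_A$ is surjective too, the composite $\pi_A\circ(X^0,g)=\delta_*(g)\circ\pi_X$ is surjective, whence $\delta_*(g)$ is surjective.

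For injectivity, I would exploit the subfunctor structure. By Remark \ref{remark_subfunctor_defect}, $\delta_*(-)$ is a subfunctor of $\Ext_{\Phi}^d(X^{d+1},-)\mid_{\mathcal{F}}$, and this inclusion is natural in the covariant variable; hence $\delta_*(g)$ is the restriction of $\Ext_{\Phi}^d(X^{d+1},g)$ to the subspace $\delta_*(X)\subseteq\Ext_{\Phi}^d(X^{d+1},X)$. Since every term of $\delta$ lies in $\mathcal{X}$, in particular $X^{d+1}\in\mathcal{X}$, so Lemma \ref{lemma_cover_mono} applies and shows that $\Ext_{\Phi}^d(X^{d+1},g)$ is injective. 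Its restriction $\delta_*(g)$ is therefore injective as well.

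Combining the two steps gives the isomorphism $\delta_*(g):\delta_*(X)\xrightarrow{\sim}\delta_*(A)$, and the dimension equality is then immediate. The only point needing care is the compatibility invoked in the injectivity step, namely that the natural inclusion $\delta_*(-)\hookrightarrow\Ext_{\Phi}^d(X^{d+1},-)\mid_{\mathcal{F}}$ intertwines $\delta_*(g)$ with $\Ext_{\Phi}^d(X^{d+1},g)$; this is just naturality of the connecting map furnished by Lemma \ref{lemma_defect_subf}. Granting it, the statement follows directly from Lemma \ref{lemma_cover_mono} together with the precover property, so I anticipate no genuine obstacle beyond bookkeeping.
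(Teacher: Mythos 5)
Your proof is correct, and its skeleton matches the paper's: both establish surjectivity of $\delta_*(g)$ from the precover property applied to $X^0\in\mathcal{X}$, and both make Lemma \ref{lemma_cover_mono} the crux of injectivity. The difference lies in how injectivity is extracted from Lemma \ref{lemma_cover_mono}. The paper works at the level of elements: it reduces the claim to showing that $\im(\xi^0,X)$ is the full preimage of $\im(\xi^0,A)$ under $(X^0,g)$, i.e.\ that if $gh$ factors through $\xi^0$ then so does $h$; this is proved by explicitly forming the $d$-pushout sequences $h\cdot\delta$ and $gh\cdot\delta$, translating ``factors through $\xi^0$'' into ``the pushout splits'' via Lemma \ref{lemma_null-homotopic}, and then cancelling $g$ using Lemma \ref{lemma_cover_mono}. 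You instead absorb all of that diagram-chasing into the single statement that the inclusion $\delta_*(-)\hookrightarrow\Ext_{\Phi}^d(X^{d+1},-)\mid_{\mathcal{F}}$ of Remark \ref{remark_subfunctor_defect} is natural, so that $\delta_*(g)$ is literally a restriction of the injective map $\Ext_{\Phi}^d(X^{d+1},g)$. This is legitimate --- ``subfunctor'' in that remark does mean the inclusion is a natural transformation, and the naturality of the connecting map $h\mapsto[h\cdot\delta]$ is exactly the bifunctoriality identity $g\cdot(h\cdot\delta)=(gh)\cdot\delta$ --- and what it buys you is a shorter proof with no explicit pushout diagrams and no direct appeal to Lemma \ref{lemma_null-homotopic}. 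What the paper's more hands-on version buys in exchange is self-containedness: it verifies the key compatibility (which you correctly flag as the one point needing care) rather than delegating it to the unproved naturality assertion implicit in Remark \ref{remark_subfunctor_defect}; note that the content of that naturality, namely that $h\cdot\delta$ splits precisely when $h$ factors through $\xi^0$, is exactly what Lemma \ref{lemma_null-homotopic} supplies.
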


\begin{proof}
By Definition \ref{defn_defect}, we have
\begin{align*}
    \delta_*(g):(X^0,X)/\im(\xi^0,X)\rightarrow (X^0,A)/\im(\xi^0,A).
\end{align*}
Note that since $g:X\rightarrow A$ is an $\mathcal{X}$-cover, the map $(X^0,g)$ is surjective. Hence it is enough to show that $\im(\xi^0,X)$ is the full preimage of $\im(\xi^0,A)$ under $(X^0,g)$. It is clear that
\begin{align*}
    (X^0,g)(\im(\xi^0, X))\subseteq\im(\xi^0,A).
\end{align*}
It remains to show that if $h:X^0\rightarrow X$ is such that $gh:X^0\rightarrow A$ factors through $\xi^0$, then $h$ factors through $\xi^0$. Consider the following morphisms of $d$-exact sequences:
 \begin{align*}
    \xymatrix@C=2em{\delta\ar[d]:&0\ar[r] &X^0\ar[r]^{\xi^0}\ar[d]_-{h}& X^1\ar[r]^{\xi^1}\ar[ddl]\ar[d]& \cdots\ar[r]&X^{d-1}\ar[r]^-{\xi^{d-1}}\ar[d]&  X^d\ar[d]\ar[r]^{\xi^d}& X^{d+1}\ar@{=}[d]\ar[r]\ar@{-->}[ddl]&0\\
    h\cdot \delta:\ar[d] & 0\ar[r]&X\ar[r]\ar[d]_g& Y^1\ar[r]\ar[d]& \cdots\ar[r]&Y^{d-1}\ar[r]\ar[d]& Y^d\ar[r]\ar[d]& X^{d+1}\ar[r]\ar@{=}[d]&0\\
    gh\cdot \delta: & 0\ar[r]&A\ar[r]& A^1\ar[r]& \cdots\ar[r]&A^{d-1}\ar[r]& A^d\ar[r]& X^{d+1}\ar[r]&0.
    }
\end{align*}
Since $gh$ factors through $\xi^0$, Lemma \ref{lemma_null-homotopic} implies that the bottom row splits. Hence, we have that $[\Ext_{\Phi}^d(X^{d+1},g)(h\cdot\delta)]=0$. Since $\Ext_{\Phi}^d(X^{d+1},g)$ is a monomorphism by Lemma \ref{lemma_cover_mono}, it follows that the middle row splits. Hence $h$ factors through $\xi^0$ by Lemma \ref{lemma_null-homotopic}.
\end{proof}

\begin{remark}\label{remark_defect_dim}
Let $X\in\mathcal{X}$ be indecomposable and assume that $D\Tr_d(X)$ has an $\mathcal{X}$-cover, say $g:Y\rightarrow D\Tr_d(X)$. Given any $d$-exact sequence with terms in $\mathcal{X}$ of the form
\begin{align*}
     \xymatrix{\delta: 0\ar[r]&X^0\ar[r]^-{\xi^0}& X^1\ar[r]^{\xi^1}&\cdots\ar[r]& X^{d-1}\ar[r]^-{\xi^{d-1}} & X^d\ar[r]^{\xi^d}& X\ar[r]&0,
    }
\end{align*}
we have that
\begin{align*}
     \dim_k(\delta_*(Y))=\dim_k(\delta_*(D\Tr_d(X)))=\dim_k(\delta^*(X)),
\end{align*}
where the first equality holds by Lemma \ref{lemma_dimension_defect} and the second by \cite[Theorem 3.8]{JK}.
\end{remark}

\begin{proposition}\label{thm_ras_X}
Assume $\mathcal{X}$ is precovering in $\mathcal{F}$. Let $X\in\mathcal{X}$ be an indecomposable such that $\Ext_{\Phi}^d(X,\mathcal{X})\neq 0$ and $g:Y\rightarrow D\Tr_d(X)$ be an $\mathcal{X}$-cover. Then there is a $d$-exact sequence with terms in $\mathcal{X}$ of the form:
\begin{align*}
    \xymatrix{
    \epsilon: 0\ar[r]&Y\ar[r]^{\eta^0}& Y^1\ar[r]^{\eta^1}& Y^2\ar[r]^{\eta^2}&\cdots\ar[r]& Y^{d-1}\ar[r]^-{\eta^{d-1}}& Y^d\ar[r]^{\eta^d}& X\ar[r]&0,
    }
\end{align*}
with $\eta^d$ right almost split in $\mathcal{X}$.
\end{proposition}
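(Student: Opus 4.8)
The plan is to realise the required sequence as a specific element of $\Ext_\Phi^d(X,Y)$, namely the one lying over the $d$-Auslander-Reiten class of $X$ in $\mathcal{F}$ under the cover map, and then to deduce the right almost split property by comparing defects in $\mathcal{X}$ and in $\mathcal{F}$. First I would assemble the ingredients. Since $\Ext_\Phi^d(X,\mathcal{X})\neq 0$, the module $X$ is non-projective, so by Lemma \ref{lemma_simple_socle} the $\End_\Phi(X)$-module $\Ext_\Phi^d(X,D\Tr_d(X))$ has simple socle; let $\widetilde{\delta}$ denote the class of the $d$-Auslander-Reiten sequence in $\mathcal{F}$ ending at $X$ supplied by Proposition \ref{thm_dAR_F_endt}, whose final map $\alpha^d$ is right almost split in $\mathcal{F}$. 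The map $\Ext_\Phi^d(X,g)$ is $\End_\Phi(X)$-linear, because the two variable slots of $\Ext_\Phi^d$ commute, and it is a monomorphism by Lemma \ref{lemma_cover_mono}, with non-zero image by Lemma \ref{lemma_nonzero_hdelta}.

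Because $\alpha^d$ is right almost split in $\mathcal{F}$, pulling $\widetilde{\delta}$ back along any non-isomorphism $X\to X$ splits, so $\widetilde{\delta}$ lies in the socle; being a non-zero submodule of a module with simple socle, $\im\Ext_\Phi^d(X,g)$ must contain that socle. Hence there is a class $\epsilon\in\Ext_\Phi^d(X,Y)$, necessarily non-split, with $\Ext_\Phi^d(X,g)(\epsilon)=\widetilde{\delta}$. Using that $\mathcal{X}$ is closed under $d$-extensions (Definition \ref{defn_closed_u_ext} and Remark \ref{remark_almost_minimal}) I would represent $\epsilon$ by a $d$-exact sequence
\[
\epsilon\colon\ 0\to Y\xrightarrow{\eta^0} Y^1\to\cdots\to Y^d\xrightarrow{\eta^d} X\to 0
\]
with all terms in $\mathcal{X}$; as $\epsilon$ is non-split, $\eta^d$ is not a split epimorphism by Corollary \ref{coro_split} together with Remark \ref{remark_Ext_pull_push}(a).

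It then remains to verify that every $\psi\colon W\to X$ with $W\in\mathcal{X}$ which is not a split epimorphism factors through $\eta^d$. By the first exact sequence of Lemma \ref{lemma_defect_subf} applied to $\epsilon$ with $B=W$, such a $\psi$ factors through $\eta^d$ precisely when the connecting map sends it to $0$, that is, when the pullback $\epsilon\cdot\psi$ vanishes in $\Ext_\Phi^d(W,Y)$. The key computation is bifunctoriality of $\Ext_\Phi^d$: pushing out along $g$ and pulling back along $\psi$ act in independent slots, so $\Ext_\Phi^d(W,g)(\epsilon\cdot\psi)=\widetilde{\delta}\cdot\psi$. Since $\psi$ is not a split epimorphism and $\alpha^d$ is right almost split in $\mathcal{F}$, $\psi$ factors through $\alpha^d$, whence $\widetilde{\delta}\cdot\psi=0$ by Lemma \ref{lemma_defect_subf} again. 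Finally $\Ext_\Phi^d(W,g)$ is injective for $W\in\mathcal{X}$ by Lemma \ref{lemma_cover_mono}, so $\epsilon\cdot\psi=0$ and $\psi$ factors through $\eta^d$, giving that $\eta^d$ is right almost split in $\mathcal{X}$.

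The main obstacle I anticipate is the bookkeeping around the two monomorphisms coming from the cover: one must use injectivity of $\Ext_\Phi^d(X,g)$ to locate $\epsilon$ over the Auslander-Reiten class, and separately use injectivity of $\Ext_\Phi^d(W,g)$, for varying $W\in\mathcal{X}$, to transport the right almost split property of $\alpha^d$ in $\mathcal{F}$ down to $\eta^d$ in $\mathcal{X}$. Keeping the covariant and contravariant actions of $\Ext_\Phi^d$ straight and checking that the connecting homomorphism of Lemma \ref{lemma_defect_subf} is exactly the pullback pairing is where care is needed; the remaining steps are formal.
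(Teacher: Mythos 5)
Your proof is correct, but it takes a genuinely different route from the paper's. The paper never lifts the $d$-Auslander-Reiten sequence of $\mathcal{F}$ through the cover: it instead applies the defect-dimension equalities of Remark \ref{remark_defect_dim} twice to manufacture a non-split $d$-exact sequence $\zeta$ starting at $Y$ and ending at some $Z^{d+1}\in\mathcal{X}$, chooses $t\colon X\rightarrow Z^{d+1}$ whose class generates a simple $\End_\Phi(X)$-submodule of the defect $\zeta^*(X)$, sets $\epsilon:=\zeta\cdot t$, and then verifies right-almost-splitness of $\eta^d$ by a double pullback ($\epsilon\cdot s$, then $\epsilon\cdot sr$) combined with Lemma \ref{lemma_null-homotopic}; neither Proposition \ref{thm_dAR_F_endt} nor Lemma \ref{lemma_simple_socle} enters its argument directly. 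You instead realise $\epsilon$ as the preimage of the $d$-Auslander-Reiten class $\widetilde{\delta}$ under the monomorphism $\Ext_\Phi^d(X,g)$ of Lemma \ref{lemma_cover_mono} --- legitimate because $\widetilde{\delta}\cdot\rad_{\End_\Phi(X)}=0$ places $\widetilde{\delta}$ in the simple socle of Lemma \ref{lemma_simple_socle}, while Lemmas \ref{lemma_cover_mono} and \ref{lemma_nonzero_hdelta} make the image a non-zero submodule, hence one containing that socle --- and then transport right-almost-splitness from $\alpha^d$ to $\eta^d$ by naturality and injectivity of $\Ext_\Phi^d(W,g)$ for each $W\in\mathcal{X}$. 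Your route is shorter and more conceptual: it bypasses Remark \ref{remark_defect_dim} (hence the higher defect formula it rests on) and the paper's intricate final verification, and it exhibits in passing that the $d$-pushout of $\epsilon$ along $g$ is the $d$-Auslander-Reiten sequence in $\mathcal{F}$, anticipating Corollary \ref{coro_dAR_division}. What the paper's route buys is that it only needs the cover-and-defect technique, which it reuses elsewhere, rather than the socle computation in $\Ext_\Phi^d(X,D\Tr_d(X))$. Two points in your write-up do need the care you flag, though both stay within the paper's toolkit: the identification of the connecting map in Lemma \ref{lemma_defect_subf} with the pullback pairing is nowhere stated in the paper, but each of your uses of it can be replaced by Lemma \ref{lemma_null-homotopic} together with Remark \ref{remark_Ext_pull_push}(a) (for the canonical morphism $\epsilon\cdot\psi\rightarrow\epsilon$, whose first component is the identity, null-homotopy says exactly that $\psi$ factors through $\eta^d$ if and only if $\epsilon\cdot\psi$ splits); and the $\End_\Phi(X)$-linearity and bifunctoriality claims are precisely the compatibilities the paper itself invokes in Remark \ref{remark_Ext_pull_push} and in the proof of Lemma \ref{lemma_nonzero_hdelta}.
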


\begin{proof}
Since $\Ext_{\Phi}^d(X,\mathcal{X})\neq 0$, there exists a non-split $d$-exact sequence with terms from $\mathcal{X}$ of the form:
\begin{align*}
    \xymatrix{\delta: 0\ar[r]&X^0\ar[r]^-{\xi^0}& X^1\ar[r]^{\xi^1}&\cdots\ar[r]& X^{d-1}\ar[r]^-{\xi^{d-1}} & X^d\ar[r]^{\xi^d}& X\ar[r]&0.
    }
\end{align*}
As not every endomorphism of $X$ factors through $\xi^d$, we have that $\dim_k(\delta^*(X))\neq 0$. By Remark \ref{remark_defect_dim}, we have that
\begin{align*}
    \dim_k(\delta_*(Y))=\dim_k(\delta_*(D\Tr_d(X)))=\dim_k(\delta^*(X))\neq 0.
\end{align*}
So $\Ext_{\Phi}^d(\mathcal{X},Y)$ is non-zero by Remark \ref{remark_subfunctor_defect} and there is a non-split $d$-exact sequence with terms in $\mathcal{X}$ of the form:

\begin{align*}
    \xymatrix{\zeta: 0\ar[r]&Y\ar[r]^-{\zeta^0}& Z^1\ar[r]^{\zeta^1}&\cdots\ar[r]& Z^{d-1}\ar[r]^-{\zeta^{d-1}} & Z^d\ar[r]^{\zeta^d}& Z^{d+1}\ar[r]&0.
    }
\end{align*}
Since not every endomorphism of $Y$ factors through $\zeta^0$, then $\dim_k(\zeta_*(Y))$ is non-zero and so, by Remark \ref{remark_defect_dim}, we have 
\begin{align*}
   0\neq \dim_k(\zeta_*(Y))=\dim_k(\zeta_*(D\Tr_d(X)))=\dim_k(\zeta^*(X)).
\end{align*}
Hence not every morphism of the form $X\rightarrow Z^{d+1}$ factors through $\zeta^d$. So there is a morphism $t:X\rightarrow Z^{d+1}$ such that its image in $\zeta^*(X)=(X, Z^{d+1})/\im(X,\zeta^d)$ generates a simple $\End_{\Phi}(X)$-module. Thus, by the dual of Remark \ref{remark_Ext_pull_push}(b), we have a morphism of $d$-exact sequences in $\mathcal{F}$ of the form:
\begin{align*}
    \xymatrix{\zeta\cdot t:\ar[d]&0\ar[r] &Y\ar[r]^{\eta^0}\ar@{=}[d]& Y^1\ar[r]^{\eta^1}\ar[d]^-{t^1}& \cdots\ar[r]&Y^{d-1}\ar[r]^-{\eta^{d-1}}\ar[d]^-{t^{d-1}}&  Y^d\ar[d]^-{t^d}\ar[r]^{\eta^d}& X\ar[d]^{t}\ar[r]&0\\
    \zeta: & 0\ar[r]&Y\ar[r]_-{\zeta^0}& Z^1\ar[r]_{\zeta^1}&\cdots\ar[r]& Z^{d-1}\ar[r]_-{\zeta^{d-1}} & Z^d\ar[r]_-{\zeta^d}& Z^{d+1}\ar[r]&0,
    }
    \end{align*}
where we can assume $Y^1,\dots,\, Y^d$ are in $\mathcal{X}$ by Remark \ref{remark_almost_minimal}.
We claim that $\epsilon:=\zeta\cdot t$ is such that $\eta^d$ is right almost split in $\mathcal{X}$. First note that since $t$ does not factor through $\zeta^d$, then $\epsilon$ is not a split $d$-exact sequence by Lemma \ref{lemma_null-homotopic}. In particular, $\eta^d$ is not a split epimorphism. Suppose that $s:W\rightarrow X$ in $\mathcal{X}$ is not a split epimorphism. We need to show that $s$ factors through $\eta^d$. Consider the morphism obtained by the dual of Remark \ref{remark_Ext_pull_push}(b):
\begin{align*}
    \xymatrix{\epsilon\cdot s:\ar[d]&0\ar[r] &Y\ar[r]^{\omega^0}\ar@{=}[d]& W^1\ar[r]^{\omega^1}\ar[d]^-{s^1}& \cdots\ar[r]&W^{d-1}\ar[r]^-{\omega^{d-1}}\ar[d]^-{s^{d-1}}&  W^d\ar[d]^-{s^d}\ar[r]^-{\omega^d}& W\ar[d]^{s}\ar[r]&0\\
    \epsilon: & 0\ar[r]&Y\ar[r]_-{\eta^0}& Y^1\ar[r]_{\eta^1}&\cdots\ar[r]& Y^{d-1}\ar[r]_-{\eta^{d-1}} & Y^d\ar[r]_-{\eta^d}& X\ar[r]&0.
    }
    \end{align*}
By Lemma \ref{lemma_null-homotopic}, we have that $s$ factoring through $\eta^d$ is equivalent to $\epsilon\cdot s$ splitting. By Remark \ref{remark_defect_dim}, it is enough to show that every morphism $r:X\rightarrow W$  factors through $\omega^d$. Note that since $s$ is not a split epimorphism, $sr:X\rightarrow X$ is not an isomorphism. Hence, $tsr:X\rightarrow Z^{d+1}$ is in $t\End_{\Phi}(X)\rad_{\End_{\Phi}(X)}$. Since the image of $t\End_{\Phi}(X)$ in $\zeta^*(X)$ is a simple module, it follows that $tsr$ projects to zero in $\zeta^*(X)$. In other words, $tsr$ factors through $\zeta^d$, so there is a morphism $\alpha: X\rightarrow Z^d$ such that $\zeta^d\alpha=tsr$. Consider:
\begin{align*}
    \xymatrix{\epsilon\cdot sr:\ar[d]&0\ar[r] &Y\ar[r]^{\mu^0}\ar@{=}[d]& U^1\ar[r]^{\mu^1}\ar[d]^-{r^1}\ar@{-->}[ldd]& \cdots\ar[r]&U^{d-1}\ar[r]^-{\mu^{d-1}}\ar[d]^-{r^{d-1}}&  U^d\ar[d]^-{r^d}\ar[r]^{\mu^d}& X\ar[d]^{r}\ar[r]\ar[ldd]&0\\
    \epsilon\cdot s:\ar[d]&0\ar[r] &Y\ar[r]^{\omega^0}\ar@{=}[d]& W^1\ar[r]^{\omega^1}\ar[d]^-{t^1s^1}& \cdots\ar[r]&W^{d-1}\ar[r]^-{\omega^{d-1}}\ar[d]_-{t^{d-1}s^{d-1}}&  W^d\ar[d]_-{t^d s^d}\ar[r]^-{\omega^d}& W\ar[d]^{ts}\ar[r]&0\\
    \zeta: & 0\ar[r]&Y\ar[r]_-{\zeta^0}& Z^1\ar[r]_{\zeta^1}&\cdots\ar[r]& Z^{d-1}\ar[r]_-{\zeta^{d-1}} & Z^d\ar[r]_-{\zeta^d}& Z^{d+1}\ar[r]&0.
    }
    \end{align*}
Then, by Lemma \ref{lemma_null-homotopic}, there is a morphism $\alpha^1:U^1\rightarrow Y$ such that $\alpha^1\mu^0=1_Y$. Hence the top row of the above diagram splits. So there is a morphism $\phi:X\rightarrow U^d$ such that $\mu^d\phi=1_X$. Note that
\begin{align*}
    \omega^d r^d\phi=r\mu^d\phi=r.
\end{align*}
Hence $r$ factors through $\omega^d$ as we wished to prove.
\end{proof}

\begin{theorem}\label{coro_d-ARseq}
Assume $\mathcal{X}$ is precovering in $\mathcal{F}$ and let $X$ be an indecomposable in $\mathcal{X}$.
\begin{enumerate}[label=(\alph*)]
    \item There exists a right almost split morphism $W\rightarrow X$ in $\mathcal{X}$.
    \item If $\Ext_{\Phi}^d(X,\mathcal{X})$ is non-zero, there is a $d$-Auslander-Reiten sequence in $\mathcal{X}$ of the form:
    \begin{align}\label{diagram_dAR_X}
    \xymatrix{0\ar[r]&\sigma X\ar[r]^-{\xi^0}& X^1\ar[r]^{\xi^1}&\cdots\ar[r]& X^{d-1}\ar[r]^-{\xi^{d-1}} & X^d\ar[r]^-{\xi^d}& X\ar[r]&0.
    }
\end{align}
\end{enumerate}
\end{theorem}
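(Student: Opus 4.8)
The plan is to treat the two parts separately: part (a) by transporting a right almost split morphism from the ambient $d$-abelian category $\mathcal{F}$ down to $\mathcal{X}$ along an $\mathcal{X}$-precover, and part (b) by combining Propositions \ref{thm_ras_X} and \ref{thm_cover} with the radical characterisation of $d$-Auslander-Reiten sequences in Lemma \ref{lemma_dAR_equiv}.

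For (a), recall that $\mathcal{F}$ is functorially finite in $\mmod\Phi$ (Definition \ref{defn_dct_sub}), so by Auslander-Smal{\o} \cite{AS} it admits a right almost split morphism $g\colon E\to X$ in $\mathcal{F}$; when $X$ is non-projective one may instead take $g=\alpha^d$ from Proposition \ref{thm_dAR_F_endt}. Since $\mathcal{X}$ is precovering in $\mathcal{F}$, I would choose an $\mathcal{X}$-precover $\pi\colon W\to E$ with $W\in\mathcal{X}$ and propose $g\pi\colon W\to X$. The two requirements are then quick to check: $g\pi$ is not a split epimorphism, since a splitting of $g\pi$ would split $g$ in $\mathcal{F}$; and if $s\colon V\to X$ with $V\in\mathcal{X}$ is not a split epimorphism, then it is not a split epimorphism in $\mathcal{F}$ either, so $s=gu$ for some $u\colon V\to E$, and as $V\in\mathcal{X}$ the map $u$ factors as $u=\pi v$ through the precover, giving $s=g\pi v$. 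Hence $g\pi$ is right almost split in $\mathcal{X}$.

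For (b), assume $\Ext_\Phi^d(X,\mathcal{X})\neq 0$ and fix an $\mathcal{X}$-cover $g\colon Y\to D\Tr_d(X)$. Proposition \ref{thm_ras_X} produces a $d$-exact sequence $\epsilon\colon 0\to Y\to Y^1\to\cdots\to Y^d\xrightarrow{\eta^d}X\to 0$ with all terms in $\mathcal{X}$ and $\eta^d$ right almost split in $\mathcal{X}$; as $\eta^d$ is not a split epimorphism, $\epsilon$ is non-split by Corollary \ref{coro_split}. By definition of $\sigma X$ and Proposition \ref{thm_cover}(a) we have $Y=\sigma X\oplus Z'$ with $\Ext_\Phi^d(X,Z')=0$, so Proposition \ref{thm_cover}(b) presents $\epsilon$, up to isomorphism, as the direct sum of a split sequence concentrated in positions $0$ and $1$ with a non-split $d$-exact sequence $\epsilon'\colon 0\to\sigma X\to V\to Y^2\to\cdots\to Y^d\xrightarrow{\eta^d}X\to 0$. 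Since this splitting only modifies the first two terms, the final map of $\epsilon'$ remains $\eta^d$, still right almost split in $\mathcal{X}$. I would then replace $\epsilon'$ by the almost minimal sequence $\delta'$ in its Yoneda class (Remark \ref{remark_almost_minimal}): this keeps all terms in $\mathcal{X}$ and the left-hand term $\sigma X$, places the inner maps $\xi^1,\dots,\xi^{d-1}$ in $\rad_\mathcal{F}$, which on objects of $\mathcal{X}$ coincides with $\rad_\mathcal{X}$, and, because deleting an identity summand from the source of the last map preserves the right almost split property, keeps the last map right almost split. Finally, the first map $\xi^0$ is a non-split monomorphism out of the indecomposable $\sigma X$, and since $\End_\Phi(\sigma X)$ is local this forces $\xi^0\in\rad_\mathcal{X}$. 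Thus $\xi^0,\dots,\xi^{d-1}\in\rad_\mathcal{X}$ and $\xi^d$ is right almost split, so $\delta'$ is a $d$-Auslander-Reiten sequence in $\mathcal{X}$ by Lemma \ref{lemma_dAR_equiv}, beginning at $\sigma X$ as claimed.

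I expect the main obstacle to be the bookkeeping in (b): checking that peeling off the summand $Z'$ and passing to the almost minimal representative simultaneously keeps the last map right almost split while driving all earlier maps into the radical. The right almost split property is not automatically stable under these reductions, so the observation that a map of the form $(\bar\eta^d,0)$ is right almost split exactly when $\bar\eta^d$ is, together with the local-endomorphism-ring argument forcing the first map into $\rad_\mathcal{X}$, carries the genuine content; the remaining input is supplied wholesale by Propositions \ref{thm_ras_X}, \ref{thm_cover} and Lemma \ref{lemma_dAR_equiv}.
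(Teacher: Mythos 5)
Your proposal is correct and, for part (b), follows essentially the same route as the paper: Proposition \ref{thm_ras_X} to produce a sequence ending in a right almost split map, Proposition \ref{thm_cover} to split off the non-split summand starting at $\sigma X$, passage to the almost minimal representative, the local-endomorphism-ring observation placing $\xi^0$ in $\rad_\mathcal{X}$, and Lemma \ref{lemma_dAR_equiv} to conclude --- your explicit check that deleting contractible summands preserves the right almost split property is precisely what the paper's ``we may also assume'' elides. The only divergence is part (a): the paper simply cites \cite[Proposition 3.10]{AMSS}, whereas you reprove it by composing a right almost split morphism in $\mathcal{F}$ (from Proposition \ref{thm_dAR_F_endt} in the non-projective case, or \cite{AS} otherwise) with an $\mathcal{X}$-precover, a correct and self-contained rendering of the standard argument.
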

\begin{proof}
(a) This follows from \cite[Proposition\ 3.10]{AMSS}.
    
(b) Let $g:Y\rightarrow D\Tr_d (X)$ be an $\mathcal{X}$-cover. Then, by Proposition \ref{thm_ras_X}, there exists a $d$-exact sequence with terms in $\mathcal{X}$ of the form
    \begin{align*}
    \xymatrix{
    \epsilon: 0\ar[r]&Y\ar[r]^{\eta^0}& Y^1\ar[r]^{\eta^1}& Y^2\ar[r]^{\eta^2}&\cdots\ar[r]& Y^{d-1}\ar[r]^-{\eta^{d-1}}& Y^d\ar[r]^{\eta^d}& X\ar[r]&0,
    }
\end{align*}
with $\eta^d$ right almost split in $\mathcal{X}$. By Proposition \ref{thm_cover}, $\epsilon$ has a non-split $d$-exact sequence with terms in $\mathcal{X}$ of the form
\begin{align*}
    \xymatrix{
    \delta:0\ar[r]&\sigma X \ar[r]^{\zeta^0}& V\ar[r]^{\zeta^1}& Y^2\ar[r]^{\eta^2}&\cdots\ar[r]& Y^{d-1}\ar[r]^-{\eta^{d-1}}& Y^d\ar[r]^{\eta^d}& X\ar[r]&0
    }
\end{align*}
as a direct summand. If $d\geq 2$, we may also assume that $\zeta^1,\,\eta^2,\dots,\,\eta^{d-1}$ are in $\rad_\mathcal{X}$. Moreover, since $\sigma X$ is indecomposable and $\zeta^0$ is not a split monomorphism, it follows that $\zeta^0$ is in $\rad_\mathcal{X}$. Hence, by Lemma \ref{lemma_dAR_equiv}, we conclude that $\delta$ is a $d$-Auslander-Reiten sequence in $\mathcal{X}$.
\end{proof}

\section{More on $d$-Auslander-Reiten sequences in $\mathcal{X}$ and the case when $\underline{\End}_\Phi(X)$ is a division ring}\label{section_6}

In this section, we study the case when, for an indecomposable $X\in\mathcal{X}$, the factor ring of $\End_\Phi (X)$ modulo the morphisms factoring through a projective is a division ring. Generalising \cite[Corollary V.2.4]{ARS}, we prove that an almost minimal $d$-exact sequence with terms in $\mathcal{X}$ ending at $X$ is a $d$-Auslander-Reiten sequence if and only if it does not split. As a consequence of this result, we prove a higher version of \cite[Proposition 2.10]{MK}.

The argument from \cite[proof of Proposition V.2.1]{ARS} can be easily modified to prove the following result. Note that this differs from the original result in two ways: it is a higher version and we work in the subcategory $\mathcal{X}$. The condition on an indecomposable $C$ in $\mmod\Phi$ to be non-projective is hence substituted with the condition on an indecomposable $X\in\mathcal{X}$ to be such that $\Ext_\Phi^d(X,\mathcal{X})\neq 0$ and $D\Tr C$ with $\sigma X$.

\begin{lemma}\label{lemma_higher_X_ARS}
Let $X$ be an indecomposable in $\mathcal{X}$ such that $\Ext_\Phi^d(X,\mathcal{X})\neq 0$. Then $\Ext^d(X, \sigma X)$, as an $\End_{\Phi}(X)$-module, has a simple socle generated by a $d$-Auslander-Reiten sequence in $\mathcal{X}$ of the form (\ref{diagram_dAR_X}).
\end{lemma}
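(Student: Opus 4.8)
The plan is to establish the two assertions separately. For the simple socle, I would reuse the computation already carried out inside the proof of Proposition \ref{thm_cover}(a). Since $\Ext_\Phi^d(X,\mathcal{X})\neq 0$ the module $X$ is non-projective, so $D\Tr_d(X)$ has an $\mathcal{X}$-cover $g:Y\rightarrow D\Tr_d(X)$ and, writing $Y=\sigma X\oplus Z'$ with $\Ext_\Phi^d(X,\sigma X)\neq 0$ and $\Ext_\Phi^d(X,Z')=0$, the monomorphism of Lemma \ref{lemma_cover_mono} yields an embedding of $\End_\Phi(X)$-modules
\begin{align*}
\Ext_\Phi^d(X,\sigma X)\;\cong\;\Ext_\Phi^d(X,Y)\;\xrightarrow{\ \Ext_\Phi^d(X,g)\ }\;\Ext_\Phi^d(X,D\Tr_d(X)).
\end{align*}
By Lemma \ref{lemma_simple_socle} the target has a simple socle as an $\End_\Phi(X)$-module. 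As $\Ext_\Phi^d(X,\sigma X)\neq 0$ and a nonzero finite-dimensional submodule of a module with simple socle again has simple socle (its socle is a nonzero submodule of the simple socle of the ambient module), the first assertion follows.

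For the generator, Theorem \ref{coro_d-ARseq}(b) provides a $d$-Auslander-Reiten sequence $\delta$ in $\mathcal{X}$ of the form (\ref{diagram_dAR_X}), whose class $[\delta]$ lies in $\Ext_\Phi^d(X,\sigma X)$ and is nonzero because $\delta$ does not split, by Remark \ref{remark_Ext_pull_push}(a). It then suffices to check that $[\delta]$ is annihilated by the radical $\rad\End_\Phi(X)$, i.e. that the pullback $\delta\cdot\varphi$ splits for every non-invertible $\varphi\in\End_\Phi(X)$; being a nonzero element of the simple socle, $[\delta]$ will then generate it. Applying Lemma \ref{lemma_defect_subf} with $B=X$, the connecting map $(X,X)\rightarrow\Ext_\Phi^d(X,\sigma X)$ sends $\varphi\mapsto\delta\cdot\varphi$ and has kernel equal to the morphisms $X\rightarrow X$ that factor through $\xi^d$. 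As $X$ is indecomposable, a non-invertible $\varphi$ is not a split epimorphism, so it factors through the right almost split morphism $\xi^d$; hence $\delta\cdot\varphi=0$, as required.

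The step I expect to be the main obstacle is the identification in the last paragraph: making precise that the connecting homomorphism of Lemma \ref{lemma_defect_subf} is exactly the pullback action $\varphi\mapsto\delta\cdot\varphi$ and that its kernel consists precisely of the maps factoring through $\xi^d$ (equivalently, recognising the contravariant defect $\delta^{*}(X)=(X,X)/\im(X,\xi^d)$ as the subfunctor of $\Ext_\Phi^d(-,\sigma X)$ from Remark \ref{remark_subfunctor_defect} evaluated at $X$). Once this is settled, the right almost split property of $\xi^d$ closes the argument, and the simple socle claim is essentially a restatement of what is already proved in Proposition \ref{thm_cover}(a).
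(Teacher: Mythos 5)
Your proof is correct, but it does not follow the paper's route, because the paper gives no self-contained proof of Lemma \ref{lemma_higher_X_ARS} at all: it asserts that the functorial argument of \cite[proof of Proposition V.2.1]{ARS} ``can be easily modified'', replacing non-projectivity of $C$ by $\Ext_\Phi^d(X,\mathcal{X})\neq 0$ and $D\Tr C$ by $\sigma X$, i.e.\ the intended proof redoes the Auslander--Reiten duality/socle computation directly in the higher subcategory setting, independently of the existence theorem. You instead bootstrap entirely from results already proved in the paper: the simple-socle claim is pulled back from the ambient statement (Lemma \ref{lemma_simple_socle}) through the $\End_\Phi(X)$-linear embedding $\Ext_\Phi^d(X,\sigma X)\cong\Ext_\Phi^d(X,Y)\hookrightarrow\Ext_\Phi^d(X,D\Tr_d(X))$ furnished by Proposition \ref{thm_cover}(a) and Lemma \ref{lemma_cover_mono}, sharpening the argument already used inside the proof of Proposition \ref{thm_cover}(a); and the generator is supplied by Theorem \ref{coro_d-ARseq}(b) rather than constructed from scratch. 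This is legitimate: Theorem \ref{coro_d-ARseq} precedes Lemma \ref{lemma_higher_X_ARS} and its proof uses neither this lemma nor Lemma \ref{lemma_ARS_gen}, so no circularity enters. What your route buys is economy, since every ingredient is quoted from the paper itself; what the paper's route buys is that the same ARS-style computation also proves Lemma \ref{lemma_simple_socle}, on which your argument relies as a black box.

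The step you flag as the main obstacle --- identifying the connecting map of Lemma \ref{lemma_defect_subf} with $\varphi\mapsto[\delta\cdot\varphi]$ --- is not a genuine gap, and you can bypass it with tools in the paper. If $\varphi\in\rad_{\End_\Phi(X)}$, then $\varphi$ is not a split epimorphism, hence factors through $\xi^d$ because $\xi^d$ is right almost split in $\mathcal{X}$. Now apply Lemma \ref{lemma_null-homotopic} to the morphism $\delta\cdot\varphi\rightarrow\delta$ induced by the $d$-pullback (dual of Remark \ref{remark_Ext_pull_push}(b)), whose end components are $1_{\sigma X}$ and $\varphi$: condition (a) of that lemma holds by the factorisation of $\varphi$ through $\xi^d$, so condition (b) gives that the first map of $\delta\cdot\varphi$ is a split monomorphism, whence $[\delta\cdot\varphi]=0$ by Corollary \ref{coro_split} and Remark \ref{remark_Ext_pull_push}(a). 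This is exactly the manoeuvre used in the paper's proof of Proposition \ref{thm_ras_X} (``$s$ factoring through $\eta^d$ is equivalent to $\epsilon\cdot s$ splitting''), so your argument closes without any appeal to the explicit form of the connecting homomorphism.
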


\begin{lemma}\label{lemma_ARS_gen}
Assume $\mathcal{X}$ is precovering in $\mathcal{F}$. Let $X$ be an indecomposable in $\mathcal{X}$ such that $\Ext_{\Phi}^d(X,\mathcal{X})\neq 0$. Consider a  non-split $d$-exact sequence of the form:
\begin{align*}
    \xymatrix{\delta: 0\ar[r]&\sigma X\ar[r]^-{\xi^0}& X^1\ar[r]^{\xi^1}&\cdots\ar[r]& X^{d-1}\ar[r]^-{\xi^{d-1}} & X^d\ar[r]^{\xi^d}& X\ar[r]&0,
    }
\end{align*}
with $X^1,\dots,\, X^d$ in $\mathcal{X}$ and, when $d\geq 2$, also $\xi^1,\dots,\, \xi^{d-1}$ in $\rad_{\mathcal{X}}$. Then, the following are equivalent:
 \begin{enumerate}[label=(\alph*)]
     \item $\delta$ is a $d$-Auslander-Reiten sequence in $\mathcal{X}$,
     \item $\xi^d$ is right almost split in $\mathcal{X}$,
     \item $\im (X,\xi^d)=\rad_{\End_{\Phi}(X)}$,
     \item $\delta^*(X)$ is a simple $\End_\Phi(X)$-module.
 \end{enumerate}
\end{lemma}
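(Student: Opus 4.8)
The plan is to prove the four conditions equivalent via a cycle, using the defect machinery developed earlier together with Lemma \ref{lemma_dAR_equiv} and Lemma \ref{lemma_higher_X_ARS}. Since $\delta$ is already assumed almost minimal (that is, $\xi^1,\dots,\xi^{d-1}\in\rad_{\mathcal{X}}$ when $d\geq 2$), Lemma \ref{lemma_dAR_equiv} tells us that (a) is equivalent to the conjunction ``$\xi^0\in\rad_{\mathcal{X}}$ and $\xi^d$ right almost split in $\mathcal{X}$''. But $\sigma X$ is indecomposable and $\xi^0$ is not a split monomorphism (otherwise $\delta$ would split, contradicting the hypothesis), so $\xi^0$ automatically lies in $\rad_{\mathcal{X}}$. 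Hence (a) and (b) are equivalent essentially for free, and the real content is tying (b) to the endomorphism-ring conditions (c) and (d).

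\textbf{The equivalence (b)$\Leftrightarrow$(c).} Recall from Definition \ref{defn_defect} that $\delta^*(X)=(X,X)/\im(X,\xi^d)$. The statement that $\xi^d$ is right almost split in $\mathcal{X}$ says precisely that a morphism $Y\to X$ with $Y\in\mathcal{X}$ factors through $\xi^d$ if and only if it is not a split epimorphism. Specialising to $Y=X$: since $X$ is indecomposable, an endomorphism of $X$ is a split epimorphism exactly when it is an isomorphism, i.e.\ when it lies outside $\rad_{\End_\Phi(X)}$. Thus, assuming $\xi^d$ is right almost split, the endomorphisms factoring through $\xi^d$ are exactly the non-isomorphisms, giving $\im(X,\xi^d)=\rad_{\End_\Phi(X)}$. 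The first step is therefore to make this ``specialise to $Y=X$'' argument precise in one direction, and for the converse I would argue that (c) forces $\xi^d$ to be non-split-epi (since $\rad_{\End_\Phi(X)}\neq\End_\Phi(X)$) and then upgrade the factorisation property from $Y=X$ to arbitrary $Y\in\mathcal{X}$ using the defect description of right almost split maps, likely invoking the exact sequence of Lemma \ref{lemma_defect_subf}.

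\textbf{The equivalence (c)$\Leftrightarrow$(d).} Here I would use that $\End_\Phi(X)$ is local (as $X$ is indecomposable in $\mmod\Phi$), so $\rad_{\End_\Phi(X)}$ is its unique maximal left ideal and the quotient $\End_\Phi(X)/\rad_{\End_\Phi(X)}$ is a division ring, hence a simple module. Now $\delta^*(X)=\End_\Phi(X)/\im(X,\xi^d)$, and $\im(X,\xi^d)$ is an $\End_\Phi(X)$-submodule. The submodule $\im(X,\xi^d)$ is proper precisely because $\delta$ does not split (so $1_X$ does not factor through $\xi^d$, by Lemma \ref{lemma_null-homotopic} or Corollary \ref{coro_split}), forcing $\im(X,\xi^d)\subseteq\rad_{\End_\Phi(X)}$. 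Then $\delta^*(X)$ is simple if and only if $\im(X,\xi^d)$ is a \emph{maximal} submodule, which by locality happens if and only if $\im(X,\xi^d)=\rad_{\End_\Phi(X)}$. This gives (c)$\Leftrightarrow$(d) directly.

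\textbf{Closing the loop.} It remains to connect (d) back to (a), and this is where Lemma \ref{lemma_higher_X_ARS} does the work: it asserts that $\Ext^d_\Phi(X,\sigma X)$ has simple socle as an $\End_\Phi(X)$-module, generated by a $d$-Auslander-Reiten sequence. Under the identification of $\delta^*(X)$ with a cyclic submodule of this $\Ext$-group (via Remark \ref{remark_subfunctor_defect} and the defect being a subfunctor), I expect that $\delta^*(X)$ simple forces $[\delta]$ to lie in the simple socle, hence $\delta$ to be a $d$-Auslander-Reiten sequence up to the equivalence captured by Lemma \ref{lemma_higher_X_ARS}. \textbf{The main obstacle} I anticipate is this last identification: making rigorous the correspondence between the class $[\delta]$, the simple socle of $\Ext^d_\Phi(X,\sigma X)$, and the simplicity of the defect module $\delta^*(X)$, so that simplicity of $\delta^*(X)$ genuinely pins down $[\delta]$ as the Auslander-Reiten class rather than merely some element with the right defect dimension. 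I would handle this by comparing dimensions via Remark \ref{remark_defect_dim} and the socle-generation statement, arguing that a non-split $\delta$ with one-dimensional (simple) defect must coincide, up to Yoneda equivalence and the almost-minimal normal form of Remark \ref{remark_almost_minimal}, with the distinguished sequence from Lemma \ref{lemma_higher_X_ARS}.
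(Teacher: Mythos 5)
Your proposal is correct and follows essentially the same route as the paper: the substantive chain (b) $\Rightarrow$ (c) $\Rightarrow$ (d) $\Rightarrow$ (a), using locality of $\End_\Phi(X)$, the identification $\delta^*(X)=\End_\Phi(X)/\im(X,\xi^d)$, and Lemma \ref{lemma_higher_X_ARS} to pin $[\delta]$ into the simple socle of $\Ext_\Phi^d(X,\sigma X)$, is exactly the paper's argument, and your flagged ``main obstacle'' is precisely the step the paper compresses into its one-line appeal to that lemma. One caution: your sketched direct upgrade (c) $\Rightarrow$ (b) (``from $Y=X$ to arbitrary $Y\in\mathcal{X}$'') would itself require the socle lemma and cannot be done with Lemma \ref{lemma_defect_subf} alone, but this is harmless since it is redundant — your cycle already closes through (d) $\Rightarrow$ (a).
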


\begin{proof}
By Definition \ref{defn_dAR_seq}, we have that (a) implies (b).
Assume now that (b) holds and note that since $X$ is indecomposable, then $\End_\Phi(X)$ is local.  Consider
\begin{align*}
    (X,\xi^d): (X, X^d)\rightarrow (X,X): \alpha\mapsto \xi^d\alpha.
\end{align*}
Assume $\beta:X\rightarrow X$ is in $\rad_{\End_\Phi(X)}$.Then, since $X$ is indecomposable, it follows that $\beta$ is not an isomorphism and so $\beta$ is not a split epimorphism. As $\xi^d$ is right almost split in $\mathcal{X}$, there exists a morphism $\alpha:X\rightarrow X^d$ such that
\begin{align*}
    \beta=\xi^d\alpha=(X,\xi^d)(\alpha),
\end{align*}
and so $\beta\in\im(X,\xi^d)$. Assume now that $\beta:X\rightarrow X$ is in $\im(X,\xi^d)$, \textbf{i.e.} $\beta=\xi^d\alpha$ for some $\alpha\in(X,X)$. Then, since $\xi^d$ is not a split epimorphism, it follows that $\beta$ is not an isomorphism and so $\beta$ is in $\rad_{\End_\Phi(X)}$. Hence (b) implies (c).

Recall that $\delta^*(X)=(X,X)/\im (X,\xi^d)$. Assume (c) holds. Then we have that $\delta^*(X)=\End_\Phi(X)/\rad_{\End_\Phi(X)}$ and this is simple as $\rad_{\End_\Phi(X)}$ is maximal. So (c) implies (d).

Assume now that (d) holds. Then, by Lemma \ref{lemma_higher_X_ARS}, we have that $\delta^*(X)$ is the socle of $\Ext^d_\Phi(X,\sigma X)$ as an $\End_\Phi(X)$-module and the non-split $d$-exact sequence $\delta$ is a $d$-Auslander-Reiten sequence in $\mathcal{X}$. So (d) implies (a).
\end{proof}

\begin{notation}
For a module $A$ in $\mathcal{F}$, we denote by $\mathcal{P}(A)$ the ideal of all morphisms of the form $A\rightarrow A$ that factor through a projective module. The factor ring of $\End_\Phi (A)$ modulo $\mathcal{P}(A)$ is then denoted by $\underline{\End}_\Phi(A)$.
\end{notation}

\begin{theorem}\label{coro_ARS_gen}
Assume $\mathcal{X}$ is precovering in $\mathcal{F}$. Let $X$ be an indecomposable in $\mathcal{X}$ such that $\underline{\End}_\Phi(X)$ is a division ring. For a $d$-exact sequence of the form:
\begin{align*}
    \xymatrix{\delta: 0\ar[r]&\sigma X\ar[r]^-{\xi^0}& X^1\ar[r]^{\xi^1}&\cdots\ar[r]& X^{d-1}\ar[r]^-{\xi^{d-1}} & X^d\ar[r]^{\xi^d}& X\ar[r]&0,
    }
\end{align*}
with terms in $\mathcal{X}$ and, when $d\geq 2$, also $\xi^1,\dots,\,\xi^{d-1}$ in $\rad_{\mathcal{X}}$, the following are equivalent:
\begin{enumerate}[label=(\alph*)]
    \item $\delta$ is a $d$-Auslander-Reiten sequence in $\mathcal{X}$,
    \item $\delta$ does not split.
\end{enumerate}
\end{theorem}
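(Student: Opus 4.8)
The plan is to prove the equivalence by reducing to the characterizations already established in Lemma \ref{lemma_ARS_gen}, exploiting the hypothesis that $\underline{\End}_\Phi(X)$ is a division ring to force the defect $\delta^*(X)$ to be simple whenever $\delta$ is non-split. The implication (a)$\Rightarrow$(b) is immediate, since a $d$-Auslander-Reiten sequence in $\mathcal{X}$ is by definition non-split (indeed $\xi^d$ right almost split means $\xi^d$ is not a split epimorphism, so $\delta$ cannot split by Corollary \ref{coro_split}). The substance is therefore the converse.

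First I would record that $\Ext_{\Phi}^d(X,\mathcal{X})\neq 0$ here, since the existence of the given non-split $\delta$ with terms in $\mathcal{X}$ forces a nonzero class in $\Ext_{\Phi}^d(X,\sigma X)$ by Remark \ref{remark_Ext_pull_push}(a); this lets me invoke the machinery of Section \ref{section_6}. Assuming (b), that $\delta$ does not split, the goal is to show $\delta^*(X)$ is a simple $\End_\Phi(X)$-module and then apply the implication (d)$\Rightarrow$(a) of Lemma \ref{lemma_ARS_gen}. By Definition \ref{defn_defect}, $\delta^*(X)=(X,X)/\im(X,\xi^d)=\End_\Phi(X)/\im(X,\xi^d)$, so I must identify $\im(X,\xi^d)$ and show the quotient is simple. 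Since $\delta$ does not split, $\xi^d$ is not a split epimorphism (again by Corollary \ref{coro_split}), so $1_X\notin\im(X,\xi^d)$, whence $\delta^*(X)\neq 0$.

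The key step is to use the division-ring hypothesis to pin down $\im(X,\xi^d)$. Any $\beta\in\im(X,\xi^d)$ factors as $\beta=\xi^d\alpha$, so $\beta$ is not a split epimorphism and hence not an isomorphism; thus $\im(X,\xi^d)\subseteq\rad_{\End_\Phi(X)}$, using that $\End_\Phi(X)$ is local because $X$ is indecomposable. For the reverse containment I would argue that the composite $\mathcal{P}(X)\hookrightarrow\End_\Phi(X)$ lands inside $\im(X,\xi^d)$: a morphism $X\to X$ factoring through a projective $P$ factors through $\xi^d$ because, projectives in $\mathcal{F}$ being projective in the $d$-abelian sense, any map from $P$ into $X$ lifts along the $d$-epimorphism $\xi^d$ (this is the point where the $d$-exactness of $\delta$, via Lemma \ref{lemma_defect_subf} applied to $P$, guarantees $(P,\xi^d)$ is surjective). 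Hence $\mathcal{P}(X)\subseteq\im(X,\xi^d)$, so $\delta^*(X)$ is a quotient of $\underline{\End}_\Phi(X)$, a division ring; being nonzero, the quotient must equal $\underline{\End}_\Phi(X)$ itself, which is simple. Therefore $\delta^*(X)$ is a simple $\End_\Phi(X)$-module, and (d) of Lemma \ref{lemma_ARS_gen} applies to yield (a).

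I expect the main obstacle to be the containment $\mathcal{P}(X)\subseteq\im(X,\xi^d)$, i.e. verifying that morphisms factoring through a projective factor through $\xi^d$; this is exactly where the hypothesis is division ring rather than field-endomorphism-simple, and where the behaviour of projective modules relative to $d$-epimorphisms in $\mathcal{F}$ must be invoked carefully. The clean way to handle it is to note that for a projective $P$ the induced sequence from Lemma \ref{lemma_defect_subf} gives $(P,\xi^d):(P,X^d)\to(P,X)$ surjective (since $\Ext_\Phi^d(P,-)=0$), so every $X\to P\to X$ indeed lifts through $\xi^d$. Once that is in place, the rest is the formal simplicity argument for a nonzero quotient of a division ring, and the result follows.
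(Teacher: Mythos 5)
Your proposal is correct and follows essentially the same route as the paper: both reduce to Lemma \ref{lemma_ARS_gen} via the defect $\delta^*(X)$, establish $\mathcal{P}(X)\subseteq\im(X,\xi^d)$ using that $\xi^d$ is an epimorphism, and use the division-ring hypothesis (together with locality of $\End_\Phi(X)$) to force the dichotomy ``$\delta^*(X)$ simple'' versus ``$\delta^*(X)=0$''. Your explicit justification of the containment $\mathcal{P}(X)\subseteq\im(X,\xi^d)$ by lifting maps from projectives through $\xi^d$ is a welcome elaboration of a step the paper only asserts, but it does not change the substance of the argument.
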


\begin{proof}
Note that as $\xi^d$ is an epimorphism, $\im (X,\xi^d)$ contains $\mathcal{P}(X)$. Since $\underline{\End}_\Phi(X) =\End_\Phi(X) /\mathcal{P}(X)$ is a division ring, it is simple as an $\End_\Phi(X) $-module. Then $\mathcal{P}(X)$ is a maximal submodule of $\End_\Phi(X) $ and, as $\End_\Phi(X) $ is local, we have that $\mathcal{P}(X)=\rad_{\End_\Phi(X) }$.  Hence, maximality and
\begin{align*}
    \rad_{\End_\Phi(X) }=\mathcal{P}(X)\subseteq \im (X,\xi^d)\subseteq \End_\Phi(X) ,
\end{align*}
imply that we have the following two cases:
\begin{enumerate}
    \item $\im (X,\xi^d)=\rad_{\End_\Phi(X)}$, \textbf{i.e.} $\delta^*(X)$ is a simple $\End_\Phi(X)$-module, in which case $\delta$ is non-split as $1_X\not\in\im (X,\xi^d)$;
    \item $\im(X,\xi^d)=\End_\Phi(X)$, \textbf{i.e.} $\delta^*(X)=0$ is not a simple $\End_\Phi(X)$-module, in which case $\delta$ splits as $1_X\in\im (X,\xi^d)$.
\end{enumerate}
Hence $\delta^*(X)$ is a simple $\End_\Phi(X)$-module if and only if $\delta$ does not split. Then, by Lemma \ref{lemma_ARS_gen}, we conclude that $\delta$ does not split if and only if $\delta$ is a $d$-Auslander-Reiten sequence in $\mathcal{X}$.
\end{proof}

\begin{corollary}\label{coro_dAR_division}
Assume $\mathcal{X}$ is precovering in $\mathcal{F}$. Let $g:Y\rightarrow D\Tr_d (X)$ be an $\mathcal{X}$-cover, where $X$ is an indecomposable in $\mathcal{X}$ with $\underline{\End}_\Phi(X)$ a division ring. Consider a non-split $d$-exact sequence with terms in $\mathcal{X}$ of the form:
\begin{align*}
    \xymatrix{
\epsilon:&0\ar[r] &Y\ar[r]^{\eta^0}& Y^1\ar[r]^{\eta^1}& \cdots\ar[r]&  Y^d\ar[r]^{\eta^d}& X\ar[r]&0,}
\end{align*}
where, if $d\geq 2$, we also have $\eta^1,\dots,\,\eta^{d-1}\in\rad_{\mathcal{X}}$. Consider a morphism induced by a $d$-pushout diagram:
\begin{align*}
\xymatrix{
\epsilon:\ar[d]&0\ar[r] &Y\ar[r]^{\eta^0}\ar[d]^-{g}& Y^1\ar[r]^{\eta^1}\ar[d]^-{g^1}& \cdots\ar[r]&  Y^d\ar[d]^-{g^d}\ar[r]^{\eta^d}& X\ar@{=}[d]\ar[r]&0\\
 \delta: & 0\ar[r]&D\Tr_d (X)\ar[r]_-{\alpha^0}& A^1\ar[r]_{\alpha^1}& \cdots\ar[r]& A^d\ar[r]_{\alpha^d}& X\ar[r]&0,
}
\end{align*}
where, if $d\geq 2$, we have that $\alpha^1,\dots,\,\alpha^{d-1}\in\rad_{\mathcal{F}}$. Then $\delta$ is a $d$-Auslander-Reiten sequence in $\mathcal{F}$ and $\eta^d$ is right almost split in $\mathcal{X}$.
\end{corollary}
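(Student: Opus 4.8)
The plan is to deduce both assertions from Theorem \ref{coro_ARS_gen}, applied once in the ambient category $\mathcal{F}$ and once in the subcategory $\mathcal{X}$. First I would record the standing consequences of the hypotheses: since $\epsilon$ is non-split we have $\Ext_\Phi^d(X,\mathcal{X})\neq 0$, so $X$ is not projective in $\mmod\Phi$, and by Proposition \ref{thm_dAR_F_endt} the indecomposable $D\Tr_d(X)$ is the left-hand term of a $d$-Auslander-Reiten sequence in $\mathcal{F}$ ending at $X$; in particular $\Ext_\Phi^d(X,D\Tr_d(X))\neq 0$. As $D\Tr_d(X)\in\mathcal{F}$ is indecomposable, it is its own $\mathcal{F}$-cover, so it coincides with the object $\sigma X$ computed with respect to $\mathcal{X}=\mathcal{F}$. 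This is precisely the left term demanded by Theorem \ref{coro_ARS_gen} in the case $\mathcal{X}=\mathcal{F}$ (where the hypotheses ``$\mathcal{F}$ precovering in $\mathcal{F}$'' and ``$\underline{\End}_\Phi(X)$ a division ring'' hold trivially, respectively by assumption).

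For the first claim I would identify the pushout class. By Remark \ref{remark_Ext_pull_push}(b), the lower row of the $d$-pushout diagram satisfies $[\delta]=[g\cdot\epsilon]=\Ext_\Phi^d(X,g)([\epsilon])$. Since $g\colon Y\to D\Tr_d(X)$ is an $\mathcal{X}$-cover, Lemma \ref{lemma_cover_mono} shows that $\Ext_\Phi^d(X,g)$ is a monomorphism, and $[\epsilon]\neq 0$ by Remark \ref{remark_Ext_pull_push}(a); hence $[\delta]\neq 0$, that is, $\delta$ does not split. Now $\delta$ starts at $D\Tr_d(X)=\sigma X$ (for $\mathcal{X}=\mathcal{F}$), ends at $X$, has all middle terms in $\mathcal{F}$, and by hypothesis has $\alpha^1,\dots,\alpha^{d-1}\in\rad_\mathcal{F}$ when $d\geq 2$. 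Theorem \ref{coro_ARS_gen} applied with $\mathcal{X}=\mathcal{F}$ then gives at once that $\delta$ is a $d$-Auslander-Reiten sequence in $\mathcal{F}$.

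For the second claim I would peel off the part of $\epsilon$ that detects $\sigma X$. Writing $Y=\sigma X\oplus Z'$ as in Proposition \ref{thm_cover}(a), part (b) of that proposition exhibits $\epsilon$ as the direct sum of a split sequence supported on $Z'$ and a non-split $d$-exact sequence
\[\delta'\colon 0\to\sigma X\xrightarrow{\zeta^0}V\xrightarrow{\zeta^1}Y^2\to\cdots\to Y^d\xrightarrow{\eta^d}X\to 0,\]
whose final morphism is again $\eta^d$. Its interior morphisms lie in $\rad_\mathcal{X}$: the maps $\eta^2,\dots,\eta^{d-1}$ are radical by hypothesis on $\epsilon$; the map $\zeta^1$ is a component of the radical morphism $\eta^1$ and hence radical, since $\rad_\mathcal{X}$ is a two-sided ideal; and $\zeta^0$ is radical because $\sigma X$ is indecomposable while $\zeta^0$ is not a split monomorphism, as $\delta'$ is non-split by Corollary \ref{coro_split}. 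Thus $\delta'$ satisfies the hypotheses of Theorem \ref{coro_ARS_gen} in $\mathcal{X}$, so it is a $d$-Auslander-Reiten sequence in $\mathcal{X}$; by Definition \ref{defn_dAR_seq} its right-hand morphism $\eta^d$ is right almost split in $\mathcal{X}$, as required.

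The principal technical point is the non-vanishing $[\delta]\neq 0$, which hinges on the monomorphism furnished by Lemma \ref{lemma_cover_mono}; the rest is bookkeeping, the only delicate part being the verification that, after applying Proposition \ref{thm_cover}(b), the interior morphisms of $\delta'$ remain in $\rad_\mathcal{X}$ so that Theorem \ref{coro_ARS_gen} is genuinely applicable in $\mathcal{X}$.
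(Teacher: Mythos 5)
Your proposal is correct, and its overall architecture is the same as the paper's: apply Theorem \ref{coro_ARS_gen} with $\mathcal{X}=\mathcal{F}$ (where $\sigma X=D\Tr_d(X)$) to handle $\delta$, then invoke Proposition \ref{thm_cover}(b) to split off from $\epsilon$ a non-split $d$-exact sequence starting at $\sigma X$ and ending with $\eta^d$, and apply Theorem \ref{coro_ARS_gen} again, this time in $\mathcal{X}$, to conclude that $\eta^d$ is right almost split. Your verification that the interior morphisms of $\delta'$ stay in $\rad_{\mathcal{X}}$ (in particular that $\zeta^1$ is radical because it is a component of the radical morphism $\eta^1$ and $\rad_\mathcal{X}$ is an ideal) is a detail the paper asserts without comment, so spelling it out is a genuine improvement in completeness.

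The one place where you diverge from the paper is the proof that $\delta$ does not split. The paper argues by contradiction at the level of null-homotopies: if $\delta$ split, Lemma \ref{lemma_null-homotopic} produces $h:Y^1\rightarrow D\Tr_d(X)$ with $h\eta^0=g$, the cover property yields $\phi$ with $h=g\phi$, and right minimality of $g$ forces $\phi\eta^0$ to be an isomorphism, making $\eta^0$ a split monomorphism, contradiction. You instead work at the level of $\Ext$-classes: $[\delta]=[g\cdot\epsilon]=\Ext^d_{\Phi}(X,g)([\epsilon])$, and Lemma \ref{lemma_cover_mono} says $\Ext^d_{\Phi}(X,g)$ is injective, so $[\epsilon]\neq 0$ gives $[\delta]\neq 0$. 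These are the same mechanism in different packaging --- the proof of Lemma \ref{lemma_cover_mono} is precisely the paper's inline contradiction argument --- but your version is arguably better organized, since it reuses a lemma the paper has already established rather than re-deriving it. One pedantic point: the identification $[\delta]=[g\cdot\epsilon]$ deserves a citation of Lemma \ref{lemma_eq_classes_d-pushout} rather than just Remark \ref{remark_Ext_pull_push}(b), since the $\delta$ in the hypothesis is not literally the constructed pushout (its interior morphisms have been made radical by dropping summands); that lemma is exactly what guarantees any morphism of $d$-exact sequences with first component $g$ and last component the identity computes the same Yoneda class. This does not affect correctness.
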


\begin{proof}
First note that in a $d$-pushout diagram of $\epsilon$ along $g$,  the middle morphisms $\alpha^1,\cdots,\, \alpha^{d-1}$ are not necessarily in $\rad_{\mathcal{F}}$. However, dropping extra direct summands of the form $A\xrightarrow{\cong} A$, we obtain a $d$-pushout diagram with middle morphisms in $\rad_{\mathcal{F}}$.

Considering Theorem \ref{coro_ARS_gen} in the case when $\mathcal{X}=\mathcal{F}$, so that $\sigma X=D\Tr_d(X)$, we have that  $\delta$ is a $d$-Auslander-Reiten sequence in $\mathcal{F}$ if it does not split.
Suppose for a contradiction that $\delta$ is a split $d$-exact sequence. Then Lemma \ref{lemma_null-homotopic} implies that there is a morphism $h:Y^1\rightarrow D\Tr_d(X)$ such that $h\eta^0=g$. Moreover, since $Y^1\in\mathcal{X}$ and $g$ is an $\mathcal{X}$-cover, there is a morphism $\phi:Y^1\rightarrow Y$ such that $h=g\phi$. Hence
\begin{align*}
    g=h\eta^0=g\phi\eta^0,
\end{align*}
and $\phi\eta^0$ is an isomorphism as $g$ is right minimal. But this implies that $\eta^0$ is a split monomorphism, contradicting our initial assumption. So $\delta$ does not split.

By Proposition \ref{thm_cover}(b), we have that $\epsilon$ is isomorphic to the direct sum of a split $d$-exact sequence:
\begin{align*}
    \xymatrix{
    0\ar[r]& Y'\ar[r]^{1_{Y'}}& Y'\ar[r]& 0\ar[r]&\cdots\ar[r]&0\ar[r]&0\ar[r]&0
    }
\end{align*}
and a non-split $d$-exact sequence:
\begin{align*}
    \xymatrix{
\zeta:&0\ar[r] &\sigma X\ar[r]^-{\zeta^0}& W\ar[r]^-{\zeta^1}& Y^2\ar[r]^{\eta^2}& \cdots\ar[r]^{\eta^{d-1}}&  Y^d\ar[r]^{\eta^d}& X\ar[r]&0,}
\end{align*}
where, for $d\geq 2$, we have that $\zeta^1,\, \eta^2,\dots,\eta^{d-1}$ are in $\rad_{\mathcal{X}}$.
Note that, by Theorem \ref{coro_ARS_gen}, we have that $\zeta$ is a $d$-Auslander-Reiten sequence in $\mathcal{X}$ and in particular $\eta^d$ is a right almost split morphism in $\mathcal{X}$.
\end{proof}

\section{Example}\label{section_7}
In this section, we illustrate the results from Section \ref{section_Kleiner2} to a $2$-representation finite algebra $\Phi$. Here we assume that $\Phi$ is an algebra over an algebraically closed field $k$ in order to be able to apply \cite[Theorem B]{HJV}.

\begin{defn}[{\cite[Definition 2.2]{IOO}}]
The algebra $\Phi$ is called \textit{$d$-representation finite} if gldim$\Phi\leq d$ and $\Phi$ has a $d$-cluster tilting object.
\end{defn}

Let $\Phi$ be the algebra defined by the following quiver with relations:

\begin{align*}
\xymatrix @C=1em@R=1em{
&&& 10\ar[rd] &&&\\
&& 9\ar[ru]\ar[rd]\ar@{..}[rr] && 8\ar[rd] &&\\
& 7 \ar[ru]\ar[rd]\ar@{..}[rr] && 6\ar[ru]\ar[rd]\ar@{..}[rr] && 5\ar[rd]&\\
4\ar[ru]\ar@{..}[rr] && 3\ar[ru]\ar@{..}[rr] && 2\ar[ru]\ar@{..}[rr] && 1.
}
\end{align*}

\begin{remark}
Note that the algebra $\Phi$ is $2$-representation finite by \cite[Theorem\ 1.18]{I}. Moreover, by \cite[Theorem 1.6]{I}, we have that $\mmod\Phi$ has the unique $2$-cluster tilting subcategory
\begin{align*}
    \mathcal{F}=\add \{ (D\Tr_2)^{j} (i)\mid i \text{ injective in } \mmod\Phi \text{ and } j\geq 0 \} .
\end{align*}
\end{remark}
Denoting the indecomposable modules in $\mmod\Phi$ by their radical series, we find the Auslander-Reiten quiver of $\mathcal{F}$ is as illustrated in Figure \ref{fig:AR_F}, see \cite[Theorems 3.3 and 3.4]{OT},
where the dashed arrows show the action of $D\Tr_2$.
\begin{figure}
\begin{align*}
\xymatrix @!0 @R=3.5em @C=3.5em{
&&& \color{red}{\begin{smallmatrix}10\\8\\5\\1\end{smallmatrix}}\ar[rd] &&&\\
&& \color{red}{\begin{smallmatrix}8\\5\\1\end{smallmatrix}}\ar[ru]\ar[rd] && \color{red}{\begin{smallmatrix}&9&\\10&&6\\&8&&2\\&&5\end{smallmatrix}}\ar[rd]\ar[rddd] &&\\
& {\begin{smallmatrix}5\\1\end{smallmatrix}}\ar[ru]\ar[rd] && \color{red}{\begin{smallmatrix}&6&\\2&&8\\&5&\end{smallmatrix}}\ar[ru]\ar[rd]\ar[rddd] && {\begin{smallmatrix}&7&\\3&&9\\&6&&10\\&&8\end{smallmatrix}}\ar[rd]\ar[rddd]&\\
\color{red}{\begin{smallmatrix}1\end{smallmatrix}}\ar[ru] && {\begin{smallmatrix}2\\5\end{smallmatrix}} \ar[ru]\ar[rddd] && {\begin{smallmatrix}3\\6\\8\end{smallmatrix}}\ar[ru]\ar[rddd] && \color{red}{\begin{smallmatrix}4\\7\\9\\10\end{smallmatrix}}\ar[rddd]\\
&&&&&\color{red}{\begin{smallmatrix}9\\6\\2\end{smallmatrix}}\ar[rd]\ar@[gray] @/_1.6pc/ @{-->}[llluuu]\\
&&&&\color{red}{\begin{smallmatrix}6\\2\end{smallmatrix}}\ar[ru]\ar[rd]\ar@[gray] @/_1.6pc/ @{-->}[llluuu]&&{\begin{smallmatrix}&7&\\3&&9\\&6&\end{smallmatrix}}\ar[rd]\ar[rddd]\ar@[gray] @/^1.6pc/ @{-->}[llluuu]\\
&&&{\begin{smallmatrix}2\end{smallmatrix}}\ar[ru]\ar@[gray] @/^1.6pc/ @{-->}[llluuu]&& {\begin{smallmatrix}3\\6\end{smallmatrix}}\ar[ru]\ar[rddd]\ar@[gray] @/^1.6pc/ @{-->}[llluuu]&& \color{red}{\begin{smallmatrix}4\\7\\9\end{smallmatrix}}\ar[rddd]\ar@[gray] @/_1.6pc/ @{-->}[llluuu]\\
&&&&&\\
&&&&&&& {\begin{smallmatrix}7\\3\end{smallmatrix}}\ar[rd]\ar@[gray] @/_1.6pc/ @{-->}[llluuu]\\
&&&&&& {\begin{smallmatrix}3\end{smallmatrix}}\ar[ru]\ar@[gray] @/^1.6pc/ @{-->}[llluuu]&& \color{red}{\begin{smallmatrix}4\\7\end{smallmatrix}}\ar[rddd]\ar@[gray] @/^1.6pc/ @{-->}[llluuu]\\
\\
\\
&&&&&&&&& {\begin{smallmatrix}4\end{smallmatrix}}\ar@[gray] @/^1.6pc/ @{-->}[llluuu]
}
\end{align*}
\caption{The Auslander-Reiten quiver of $\mathcal{F}$.}
\label{fig:AR_F}
\end{figure}

Consider the full subcategory of $\mathcal{F}$ closed under isomorphisms in $\mathcal{F}$:
\begin{align*}
    \mathcal{X}:=\add\Big\{ \begin{smallmatrix}1\end{smallmatrix},\, \begin{smallmatrix}8\\5\\1\end{smallmatrix},\, \begin{smallmatrix}10\\8\\5\\1\end{smallmatrix},\, \begin{smallmatrix}&9&\\10&&6\\&8&&2\\&&5\end{smallmatrix},\, \begin{smallmatrix}4\\7\\9\\10\end{smallmatrix},\, \begin{smallmatrix}&6&\\2&&8\\&5&\end{smallmatrix},\, \begin{smallmatrix}6\\2\end{smallmatrix},\, \begin{smallmatrix}9\\6\\2\end{smallmatrix},\, \begin{smallmatrix}4\\7\\9\end{smallmatrix},\, \begin{smallmatrix}4\\7\end{smallmatrix} \Big\},
\end{align*}
\textbf{i.e.} $\add$ of the vertices coloured red in Figure \ref{fig:AR_F}. Using
the following module in $\mathcal{X}$:
\begin{align*}
s:=\begin{smallmatrix}1\end{smallmatrix}\oplus \begin{smallmatrix}8\\5\\1\end{smallmatrix}\oplus \begin{smallmatrix}10\\8\\5\\1\end{smallmatrix}\oplus \begin{smallmatrix}&9&\\10&&6\\&8&&2\\&&5\end{smallmatrix}\oplus \begin{smallmatrix}4\\7\\9\\10\end{smallmatrix}\oplus \begin{smallmatrix}&6&\\2&&8\\&5&\end{smallmatrix},
\end{align*}
and let $\Gamma:=\End_{\Phi}(s)$. We check that the conditions (i)-(iv) from \cite[Theorem B]{HJV} hold.

(i) Since $\Phi$ has finite global dimension, then $s$ has finite projective dimension.

(ii) As $s$ is projective in $\mmod\Phi$, it follows that $\Ext^{\geq 1}_{\Phi}(s,s)=0$.

(iii) When $x\in\mathcal{X}$ is a direct summand of $s$, we have a trivial exact sequence. Moreover, we have the following exact sequences:
\begin{align*}
    \xymatrix@R=1em{
    0\ar[r]&{\begin{smallmatrix}1\end{smallmatrix}}\ar[r]& {\begin{smallmatrix}8\\5\\1\end{smallmatrix}}\ar[r]&{\begin{smallmatrix}&6&\\2&&8\\&5&\end{smallmatrix}}\ar[r]&{\begin{smallmatrix}6\\2\end{smallmatrix}}\ar[r]&0,\\
    0\ar[r]&{\begin{smallmatrix}1\end{smallmatrix}}\ar[r]& {\begin{smallmatrix}10\\8\\5\\1\end{smallmatrix}}\ar[r]&{\begin{smallmatrix}&9&\\10&&6\\&8&&2\\&&5\end{smallmatrix}}\ar[r]&{\begin{smallmatrix}9\\6\\2\end{smallmatrix}}\ar[r]&0,\\
    0\ar[r]&{\begin{smallmatrix}8\\5\\1\end{smallmatrix}}\ar[r]& {\begin{smallmatrix}10\\8\\5\\1\end{smallmatrix}}\ar[r]&{\begin{smallmatrix}4\\7\\9\\10\end{smallmatrix}}\ar[r]&{\begin{smallmatrix}4\\7\\9\end{smallmatrix}}\ar[r]&0,\\
    0\ar[r]&{\begin{smallmatrix}&6&\\2&&8\\&5&\end{smallmatrix}}\ar[r]& {\begin{smallmatrix}&9&\\10&&6\\&8&&2\\&&5\end{smallmatrix}}\ar[r]&{\begin{smallmatrix}4\\7\\9\\10\end{smallmatrix}}\ar[r]&{\begin{smallmatrix}4\\7\end{smallmatrix}}\ar[r]&0,\\
    }
\end{align*}
so (iii) is satisfied.

(iv) Consider $\mathcal{G}:=\Hom_{\Phi}(s,\mathcal{X})\subseteq\mmod\Gamma$. In addition to the idempotents in $\Gamma$ corresponding to the identity morphisms, we have the following non-zero morphisms between indecomposable direct summands of $s$:
\begin{align*}
    &\alpha: {\begin{smallmatrix}&9&\\10&&6\\&8&&2\\&&5\end{smallmatrix}}\rightarrow {\begin{smallmatrix}4\\7\\9\\10\end{smallmatrix}},\,\,\,
    \beta: {\begin{smallmatrix}&6&\\2&&8\\&5&\end{smallmatrix}}\rightarrow {\begin{smallmatrix}&9&\\10&&6\\&8&&2\\&&5\end{smallmatrix}},\,\,\,
    \gamma: {\begin{smallmatrix}8\\5\\1\end{smallmatrix}}\rightarrow{\begin{smallmatrix}&6&\\2&&8\\&5&\end{smallmatrix}},\,\,\,
    \delta: {\begin{smallmatrix}1\end{smallmatrix}}\rightarrow {\begin{smallmatrix}8\\5\\1\end{smallmatrix}},\,\,\, \\
    &\epsilon: {\begin{smallmatrix}10\\8\\5\\1\end{smallmatrix}}\rightarrow{\begin{smallmatrix}&9&\\10&&6\\&8&&2\\&&5\end{smallmatrix}},\,\,\,
    \zeta: {\begin{smallmatrix}8\\5\\1\end{smallmatrix}}\rightarrow{\begin{smallmatrix}10\\8\\5\\1\end{smallmatrix}},\,\,\,
    \beta\gamma=\epsilon\zeta: {\begin{smallmatrix}8\\5\\1\end{smallmatrix}}\rightarrow {\begin{smallmatrix}&9&\\10&&6\\&8&&2\\&&5\end{smallmatrix}},
\end{align*}
with $\alpha\beta=0$ and $\gamma\delta=0$.
Then, using \cite[Theorem 3.7, Chapter II]{A}, we have that $\Gamma$ is isomorphic to the algebra $\Psi$ defined by the following quiver with relations:

\begin{align*}
    \xymatrix @C=1em@R=1em{
    && f\ar[rd]\\
    & e\ar[ru]\ar[rd]\ar@{..}[rr]&& d\ar[rd]\\
    c\ar[ru]\ar@{..}[rr]&& b\ar[ru]\ar@{..}[rr]&& a.
    }
\end{align*}
We look at $\mathcal{G}$ in terms of quiver representations, using \cite[Theorem 1.6, Chapter III]{A}. So for example, using again the radical series notation, we have
\begin{align*}
    \Hom_\Phi\Big( s, {\begin{smallmatrix}&9&\\10&&6\\&8&&2\\&&5\end{smallmatrix}} \Big)= \begin{smallmatrix}&e&\\b&&f\\&d&\end{smallmatrix}.
\end{align*}
Similarly, we find the radical series of $\Hom_{\Phi}(s,x)$ for each indecomposable $x\in\mathcal{X}$.
Then, using these, it is easy to see that the Auslander-Reiten quiver of $\mathcal{G}$ is as shown in Figure \ref{fig:AR_G}.
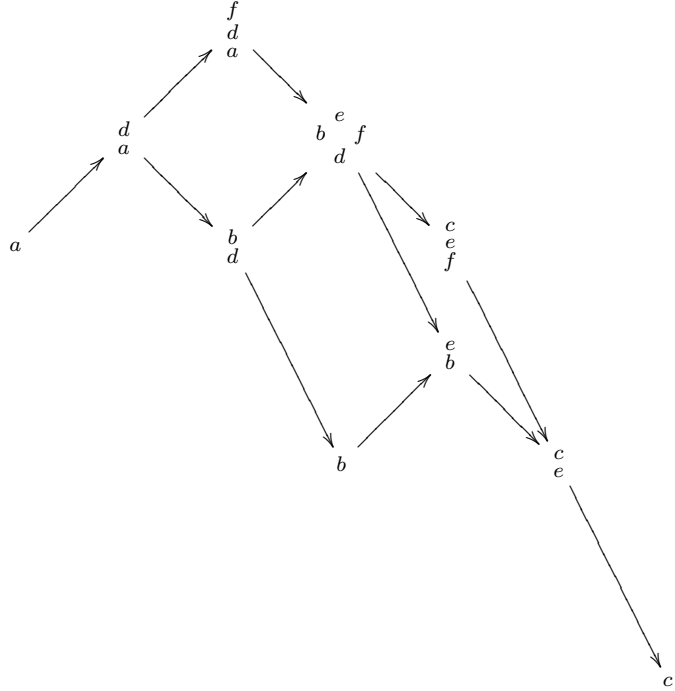
\begin{figure}[h]
\begin{align*}
\xymatrix@!0 @R=3.5em @C=3.5em{
&&{\begin{smallmatrix}f\\d\\a\end{smallmatrix}}\ar[rd]\\
&{\begin{smallmatrix}d\\a\end{smallmatrix}}\ar[ru]\ar[rd]&&{\begin{smallmatrix}&e&\\b&&f\\&d&\end{smallmatrix}}\ar[rd]\ar[rdd]\\
{\begin{smallmatrix}a\end{smallmatrix}}\ar[ru]&& {\begin{smallmatrix}b\\d\end{smallmatrix}}\ar[ru]\ar[rdd]&& {\begin{smallmatrix}c\\e\\f\end{smallmatrix}}\ar[rdd]\\
&&&& {\begin{smallmatrix}e\\b\end{smallmatrix}}\ar[rd]\\
&&& {\begin{smallmatrix}b\end{smallmatrix}}\ar[ru]&& {\begin{smallmatrix}c\\e\end{smallmatrix}}\ar[rdd]\\
\\
&&&&&& {\begin{smallmatrix}c\end{smallmatrix}}
}
\end{align*}
\caption{The Auslander-Reiten quiver of $\mathcal{G}$.}
\label{fig:AR_G}
\end{figure}
By \cite[Remark B.5]{JL}, we conclude that $\mathcal{G}$ is isomorphic to the unique $2$-cluster tilting subcategory of $\mmod\Psi$. Hence $\mathcal{G}\subseteq\mmod\Gamma$ is $2$-cluster tilting.

So (i)-(iv) from \cite[Theorem B]{HJV} hold and we have that $\mathcal{X}$ is a wide subcategory of $\mathcal{F}$ in the sense of \cite[Definition 2.11]{HJV}. In particular, $\mathcal{X}\subseteq \mathcal{F}$ is an additive subcategory closed under $2$-extensions.

Looking at the Auslander-Reiten quiver of $\mathcal{F}$, we see that the following are the $2$-Auslander-Reiten sequences in $\mathcal{F}$ with right end term in $\mathcal{X}$:
\begin{align*}
\xymatrix@R=1em{
0\ar[r]&{\begin{smallmatrix}8\\5\\1\end{smallmatrix}}\ar[r]& {\begin{smallmatrix}&6&\\2&&8\\&5&\end{smallmatrix}}\oplus{\begin{smallmatrix}10\\8\\5\\1\end{smallmatrix}}\ar[r]&{\begin{smallmatrix}6\\2\end{smallmatrix}}\oplus{\begin{smallmatrix}&9&\\10&&6\\&8&&2\\&&5\end{smallmatrix}}\ar[r]&{\begin{smallmatrix}9\\6\\2\end{smallmatrix}}\ar[r]&0&& \text{(a)}\\
0\ar[r]&{\begin{smallmatrix}5\\1\end{smallmatrix}}\ar[r]& {\begin{smallmatrix}2\\5\end{smallmatrix}}\oplus{\begin{smallmatrix}8\\5\\1\end{smallmatrix}}\ar[r]&{\begin{smallmatrix}2\end{smallmatrix}}\oplus{\begin{smallmatrix}&6&\\2&&8\\&5&\end{smallmatrix}}\ar[r]&{\begin{smallmatrix}6\\2\end{smallmatrix}}\ar[r]&0,&& \text{(b)}\\
0\ar[r]&{\begin{smallmatrix}3\\6\\8\end{smallmatrix}}\ar[r]& {\begin{smallmatrix}3\\6\end{smallmatrix}}\oplus{\begin{smallmatrix}&7&\\3&&9\\&6&&10\\&&8\end{smallmatrix}}\ar[r]&{\begin{smallmatrix}&7\\3&&9\\&6\end{smallmatrix}}\oplus{\begin{smallmatrix}4\\7\\9\\10\end{smallmatrix}}\ar[r]&{\begin{smallmatrix}4\\7\\9\end{smallmatrix}}\ar[r]&0,&& \text{(c)}\\
0\ar[r]&{\begin{smallmatrix}3\\6\end{smallmatrix}}\ar[r]& {\begin{smallmatrix}3\end{smallmatrix}}\oplus{\begin{smallmatrix}&7\\3&&9\\&6\end{smallmatrix}}\ar[r]&{\begin{smallmatrix}7\\3\end{smallmatrix}}\oplus{\begin{smallmatrix}4\\7\\9\end{smallmatrix}}\ar[r]&{\begin{smallmatrix}4\\7\end{smallmatrix}}\ar[r]&0.&& \text{(d)}
}
\end{align*}

Note that all the terms in (a) are in $\mathcal{X}$, so (a) is also a $2$-Auslander-Reiten sequence in $\mathcal{X}$. Moreover, the following are $\mathcal{X}$-covers:
\begin{align*}
{\begin{smallmatrix}1\end{smallmatrix}}\rightarrow {\begin{smallmatrix}5\\1\end{smallmatrix}},\,\,\,
{\begin{smallmatrix}&6\\2&&8\\&5\end{smallmatrix}}\rightarrow {\begin{smallmatrix}3\\6\\8\end{smallmatrix}},\,\,\,
{\begin{smallmatrix}6\\2\end{smallmatrix}}\rightarrow {\begin{smallmatrix}3\\6\end{smallmatrix}}.
\end{align*}
Then, using these covers, (b), (c), (d), Theorem \ref{coro_d-ARseq} and the fact that the relevant $\Ext^2$-spaces are one dimensional by \cite[Theorem 3.6]{OT}, we find the $2$-Auslander-Reiten sequences in $\mathcal{X}$:

\begin{align*}
\xymatrix@R=1em{
0\ar[r]&{\begin{smallmatrix}1\end{smallmatrix}}\ar[r]& {\begin{smallmatrix}8\\5\\1\end{smallmatrix}}\ar[r]&{\begin{smallmatrix}&6&\\2&&8\\&5&\end{smallmatrix}}\ar[r]&{\begin{smallmatrix}6\\2\end{smallmatrix}}\ar[r]&0,&& \text{(b')}\\
0\ar[r]&{\begin{smallmatrix}&6\\2&&8\\&5\end{smallmatrix}}\ar[r]& {\begin{smallmatrix}&9&\\10&&6\\&8&&2\\&&5\end{smallmatrix}}\oplus{\begin{smallmatrix}6\\2\end{smallmatrix}}\ar[r]&{\begin{smallmatrix}9\\6\\2\end{smallmatrix}}\oplus{\begin{smallmatrix}4\\7\\9\\10\end{smallmatrix}}\ar[r]&{\begin{smallmatrix}4\\7\\9\end{smallmatrix}}\ar[r]&0,&& \text{(c')}\\
0\ar[r]&{\begin{smallmatrix}6\\2\end{smallmatrix}}\ar[r]& {\begin{smallmatrix}9\\6\\2\end{smallmatrix}}\ar[r]&{\begin{smallmatrix}4\\7\\9\end{smallmatrix}}\ar[r]&{\begin{smallmatrix}4\\7\end{smallmatrix}}\ar[r]&0.&& \text{(d')}
}
\end{align*}

\end{document}